\theoremstyle{plain}
\newtheorem{theorem}{Theorem}
\newtheorem{proposition}{Proposition}
\newtheorem{corollary}{Corollary}
\newtheorem{lemma}{Lemma}
\theoremstyle{definition}
\newtheorem{definition}{Definition}
\newtheorem{example}{Example}
\newtheorem{remark}{Remark}
\newcommand{\rn}{\mathbb R}
\newcommand{\nn}{\mathbb N}
\newcommand{\qn}{\mathbb Q}
\newcommand{\zn}{\mathbb Z}
\newcommand{\cn}{\mathbb C}
\newcommand{\sn}{\mathbb S}
\newcommand{\la}{\langle}
\newcommand{\ra}{\rangle}
\renewcommand{\i}{\mathrm{i}}
\DeclareMathOperator{\ricci}{Ricci}
\DeclareMathOperator{\tr}{trace}
\DeclareMathOperator{\sgn}{sgn}
\DeclareMathOperator{\rank}{rank}
\begin{document}

\title[CMC biharmonic surfaces]{CMC proper-biharmonic surfaces of constant Gaussian curvature in spheres}

\author{E. Loubeau}
\address{D{\'e}partement de Math{\'e}matiques \\
LMBA, UMR 6205 \\
Universit{\'e} de Bretagne Occidentale \\
6, avenue Victor Le Gorgeu \\
CS 93837, 29238 Brest Cedex 3, France}
\email{Eric.Loubeau@univ-brest.fr}

\author{C. Oniciuc}
\address{Faculty of Mathematics, Al.I. Cuza University of Iasi, Bd. Carol I no. 11, 700506
Iasi, Romania}
\email{oniciucc@uaic.ro}

\keywords{Biharmonic map; Constant mean curvature surfaces}
\subjclass{53C42, 53C43, 58E20}
\thanks{C. Oniciuc was supported by a grant of the Romanian National Authority for Scientific Research, CNCS-UEFISCDI, project number PN-II-RU-TE-2011-3-0108}

\begin{abstract}
CMC surfaces in spheres are investigated under the extra condition of biharmonicity. From the work of Miyata, especially in the flat case, we give a complete description of such immersions and show that for any $h\in (0,1)$ there exist CMC proper-biharmonic planes and cylinders in $\sn^5$ with $|H|=h$, while a necessary and sufficient condition on $h$ is found for the existence of CMC proper-biharmonic tori in $\sn^5$.
\end{abstract}

\maketitle

\section{Introduction}

While the link between minimal submanifolds and harmonic Riemannian immersions is by now a classical fact, the generalisation of harmonicity by biharmonic maps as critical points of the bienergy
$$ E_2 (\phi) = \tfrac{1}{2} \int_M |\tau(\phi)|^2 \, v_g,$$
for $\phi : (M,g) \to (N,\tilde{g})$, has many contact points with CMC submanifolds but their relationship is a more complex one. 

The Euler-Lagrange equation for biharmonic maps
$$\tau_2 (\phi) = - \Delta \tau(\phi) - \tr R^N (d\phi,\tau(\phi))d\phi =0,$$
certainly simplifies when considering Riemannian immersions, but not to the point of revealing any specific geometric content, and therefore it remains worthwhile to combine this variational problem with more classical conditions. By proper-biharmonic, we shall designate maps which are biharmonic without being harmonic.

Submanifolds in spheres which, as submanifolds in the ambient Euclidean space, admit a decomposition into a sum of one or two eigenfunctions of the Laplacian, according to their type, were intensively studied.

By a result of Takahashi~\cite{Takahashi}, it is well-known that a submanifold of the Euclidean sphere $\sn^n$ is of $1$-type if and only if it is minimal in $\sn^n$ or in a small hypersphere of $\sn^n$. Further, the mass-symmetric $2$-type surfaces, which must be CMC, were investigated for example in~\cite{HV1,HV2,Miyata}.

The condition of biharmonicity on CMC submanifolds in spheres implies this remarkable decomposition of the immersion. We will always consider full immersions in $\sn^n$, i.e. their images do not lie in any totally geodesic sphere $\sn^{n'}\subset \sn^n$. From the work of Miyata~\cite{Miyata}, the various parameters involved in this description of mass-symmetric $2$-type flat surfaces in $\sn^5$ simplify in the biharmonic case into formulas given in Theorem~\ref{thm6} and the immersions merely depend on an angle $\rho\in [0,\tfrac{1}{2}\arccos\tfrac{h-1}{1+h}]$. 

Such formulas enable us to conclude that for any $h\in (0,1)$, there exist CMC proper-biharmonic planes and cylinders, of constant mean curvature $h$, in the unit Euclidean $5$-dimensional sphere $\sn^5$.

Moreover, double periodicity and therefore the existence of CMC proper-biharmonic tori in $\sn^5$, is dependent on $h$ being in the range of a certain rational function on $\qn^2$.

CMC proper-biharmonic surfaces have also been studied in~\cite{Fetcu,OZ,S}.

For Riemannian immersions in spheres, the condition of biharmonicity combined with CMC forces the mean curvature to be less than one, with the extreme values zero and one being characteristic of minimality, a trivial case for our study, and, for the latter, a well-known construction of biharmonic maps from minimal immersions in a $45$-th parallel hypersphere.

\begin{proposition}\label{prop1}\cite{OniciucPhD}
Let $\phi : M^m \to \sn^n$ be a CMC proper-biharmonic immersion. Then $|H|\in (0,1]$ and $|H|=1$ if and only if $\phi$ induces a minimal immersion of $M^m$ into a small hypersphere $\sn^{n-1}(\frac{1}{\sqrt{2}}) \subset \sn^n$.
\end{proposition}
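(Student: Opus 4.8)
The plan is to reduce biharmonicity of the isometric immersion $\phi$ to the two standard tension equations for submanifolds of $\sn^n$, and then to exploit the CMC hypothesis to turn them into a pointwise algebraic constraint on $|H|$. Writing $B$ for the second fundamental form, $A$ for the shape operator, and $\nabla^\perp, \Delta^\perp$ for the connection and rough Laplacian of the normal bundle, the bitension field splits — using $R^{\sn^n}(X,Y)Z = \la Y,Z\ra X - \la X,Z\ra Y$ — into a normal part $\Delta^\perp H + \tr B(\cdot, A_H\cdot) - mH$ and a tangential part $2\,\tr A_{\nabla^\perp_\cdot H}(\cdot) + \tfrac{m}{2}\grad|H|^2$. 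First I would observe that the CMC assumption gives $\grad|H|^2 = 0$, so the tangential equation carries no scalar information once we record that $|H|$ is constant, and only the normal equation matters.

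Next I would take the $\la\,\cdot\,, H\ra$ inner product of the normal equation, using $\la \tr B(\cdot, A_H\cdot), H\ra = |A_H|^2$, and combine it with the Bochner identity for the constant function $|H|^2$, namely $0 = \tfrac{1}{2}\Delta|H|^2 = \la \Delta^\perp H, H\ra - |\nabla^\perp H|^2$, to reach the pointwise identity $m|H|^2 = |\nabla^\perp H|^2 + |A_H|^2$. This is the crucial step, and it is entirely pointwise — essential here, since $M$ is not assumed compact and no integration is available. Because $\tr A_H = \la \tr B, H\ra = m|H|^2$, the Cauchy--Schwarz inequality $|A_H|^2 \ge (\tr A_H)^2/m = m|H|^4$ gives $m|H|^2 \ge |A_H|^2 \ge m|H|^4$, hence $|H|^2 \le 1$ wherever $H\neq 0$. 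Properness forbids $H\equiv 0$, and as $|H|$ is constant this yields $|H|\in(0,1]$.

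For the equality case, $|H|=1$ forces both $\nabla^\perp H = 0$ and $|A_H|^2 = m|H|^4$; equality in Cauchy--Schwarz together with $\tr A_H = m$ then forces $A_H = \Id$. To locate $\phi(M)$, I would pass to the position vector $x\colon M\to\rn^{n+1}$ and use that $H$, viewed as an $\rn^{n+1}$-valued map, satisfies $\tilde\nabla_X H = -A_H X + \nabla^\perp_X H = -X$ for the flat connection $\tilde\nabla$ of $\rn^{n+1}$ (the $x$-component drops out since $H\perp TM$). As $dx(X)=X$, this reads $d(x+H)=0$, so $x+H=a$ is a fixed vector; from $|x|=1$, $|H|=1$ and $\la x,H\ra=0$ one gets $|a|^2=2$ and $\la x,a\ra = 1$. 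Thus $\phi(M)$ lies in $\sn^n$ intersected with the affine hyperplane $\{\la y,a\ra = 1\}$, a small hypersphere of radius $\sqrt{1-\tfrac12} = 1/\sqrt{2}$. Moreover $H = a - x$ is exactly the unit normal of this hypersphere in $\sn^n$, so the mean curvature vector of $\phi$ has no component tangent to $\sn^{n-1}(1/\sqrt{2})$; since this hypersphere is totally umbilical, the mean curvature of $\phi$ within it is the tangential part of $H$, which vanishes, and $\phi$ is minimal in $\sn^{n-1}(1/\sqrt{2})$. The converse is the standard construction: if $\phi$ is minimal in $\sn^{n-1}(1/\sqrt{2})$, whose mean curvature in $\sn^n$ has length $\cot 45^\circ = 1$, then $H$ is parallel with $A_H=\Id$, the normal equation reduces to $mH - mH = 0$, and $\phi$ is proper-biharmonic with $|H|=1$.

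The main obstacle is not the inequality but the rigidity in the equality case: squeezing $\nabla^\perp H = 0$ and $A_H=\Id$ out of a single scalar identity, and then converting these pointwise linear-algebraic facts into the global conclusion that $\phi(M)$ lies in a fixed small hypersphere. The device that makes this work is the Euclidean position-vector computation $d(x+H)=0$, which integrates infinitesimal umbilicity to a genuine affine constraint \emph{without} any compactness assumption on $M$.
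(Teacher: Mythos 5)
Your proof is correct. Note that the paper itself gives no argument for this proposition---it is quoted from the cited thesis \cite{OniciucPhD}---so there is nothing in the text to compare against line by line; but your route is exactly the standard one used there and in the related literature (\cite{CMO,BMO2}): split the bitension field into normal and tangential parts, pair the normal equation with $H$, use CMC and the Weitzenb\"ock identity $\tfrac12\Delta|H|^2=\la\Delta^\perp H,H\ra-|\nabla^\perp H|^2$ to get the pointwise relation $m|H|^2=|\nabla^\perp H|^2+|A_H|^2$, apply Cauchy--Schwarz with $\tr A_H=m|H|^2$, and in the equality case integrate $\nabla^\perp H=0$, $A_H=\Id$ via the position vector to land in $\sn^{n-1}\bigl(\tfrac{1}{\sqrt2}\bigr)$. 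All steps, including the rigidity argument and the converse, check out.
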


The next proposition illustrates how too strong a condition on the mean curvature brings us back to a familiar case.

\begin{proposition}\cite{BMO2}
Let $\phi : M^2 \to \sn^n$ be a parallel mean curvature proper-biharmonic surface. Then $\phi$ induces a minimal immersion of $M^2$ into a small hypersphere $\sn^{n-1}(\frac{1}{\sqrt{2}}) \subset \sn^n$.
\end{proposition}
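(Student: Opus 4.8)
The plan is to exploit the parallel mean curvature hypothesis to trivialise most of the biharmonic system, reduce everything to an algebraic statement about the shape operator $A_H$ in the direction of the mean curvature, and then use the two-dimensionality of $M$ to force $|H|=1$, at which point Proposition~\ref{prop1} finishes the proof.

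First, recall that for a Riemannian immersion $\phi:M^2\to\sn^n$ the biharmonic equation is equivalent to the vanishing of the normal and tangential parts of $\tau_2(\phi)$,
\begin{equation*}
\Delta^\perp H+\tr B(\cdot,A_H\,\cdot)-2H=0,\qquad 2\,\tr A_{\nabla^\perp_{(\cdot)}H}(\cdot)+\tfrac12\grad|H|^2=0,
\end{equation*}
$B$ being the second fundamental form and $A$ the shape operator. The parallel mean curvature assumption gives $\nabla^\perp H=0$ and $|H|=\mathrm{const}$, so the tangential equation is automatic and the normal one reduces to $\tr B(\cdot,A_H\,\cdot)=2H$. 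Setting $\xi_1=H/|H|$, which is a globally defined parallel unit normal (here $|H|\in(0,1]$ by Proposition~\ref{prop1}), and completing it to a normal frame $\{\xi_1,\xi_2,\dots\}$, I would project this identity onto the normal directions to obtain $\tr(A_{\xi_1}^2)=2$ and $\tr(A_{\xi_1}A_{\xi_\alpha})=0$ for $\alpha\ge2$, together with the general relation $\tr A_{\xi_1}=2|H|$. Since $A_{\xi_1}$ is symmetric on a two-dimensional space, these identities pin down its eigenvalues as the constants $|H|\pm\sqrt{1-|H|^2}$; in particular $|H|\le1$, with $|H|=1$ if and only if $A_{\xi_1}=\Id$, i.e. $\phi$ is pseudo-umbilical.

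The core of the argument is then to rule out $0<|H|<1$, and this is where the surface structure is decisive. Because $\xi_1$ is parallel, the Codazzi equation makes the trace-free operator $\Phi=A_{\xi_1}-|H|\,\Id$ a symmetric Codazzi tensor, hence the $(2,0)$-part it determines is a holomorphic quadratic differential $Q$ on the Riemann surface $M$, while $|\Phi|^2=2(1-|H|^2)$ is constant. If $0<|H|<1$ then $Q=f\,dz^2$ is nowhere zero and $|f|^2e^{-4u}$ is constant in a conformal coordinate with $g=e^{2u}|dz|^2$; as $\log|f|$ is harmonic this forces the conformal factor $u$ to be harmonic, so $M$ is flat. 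The same trace identities then show that each $A_{\xi_\alpha}$ ($\alpha\ge2$) is trace-free and $A_{\xi_1}$-orthogonal, hence off-diagonal in the principal basis of $A_{\xi_1}$. I would feed this rigid structure into the Gauss--Codazzi--Ricci equations (equivalently, invoke the codimension-reduction theory for surfaces of parallel mean curvature, reducing $\phi$ to a constant-mean-curvature surface in a totally geodesic $\sn^3\subset\sn^n$, whose proper-biharmonic surfaces are open parts of $\sn^2(\tfrac1{\sqrt2})$ with $|H|=1$) to reach a contradiction with $0<|H|<1$.

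I expect this last step to be the main obstacle: the purely algebraic content of the biharmonic system only yields $|H|\le1$, and separating off the borderline value $|H|=1$ requires the genuinely two-dimensional input---the constant-norm holomorphic Hopf differential and the flatness and reduction it entails. Once $0<|H|<1$ is excluded we have $|H|=1$, and Proposition~\ref{prop1} immediately gives that $\phi$ induces a minimal immersion of $M^2$ into the small hypersphere $\sn^{n-1}(\tfrac1{\sqrt2})\subset\sn^n$.
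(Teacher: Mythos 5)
Your first two paragraphs are sound and match the algebraic core of the argument in \cite{BMO2} (note that the paper itself gives no proof of this proposition, only the citation): splitting $\tau_2(\phi)$ into normal and tangential parts, observing that $\nabla^\perp H=0$ kills the tangential equation and $\Delta^\perp H$, projecting $\tr B(\cdot,A_H\cdot)=2H$ to obtain $\tr A_{\xi_1}=2|H|$, $\tr (A_{\xi_1}^2)=2$, $\tr(A_{\xi_1}A_{\xi_\alpha})=0$, and hence the constant eigenvalues $|H|\pm\sqrt{1-|H|^2}$ with $|H|=1$ exactly in the pseudo-umbilical case; the Hopf-differential argument that $0<|H|<1$ forces $M$ to be flat is also correct, as is the final appeal to Proposition~\ref{prop1} once $|H|=1$ is known.

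However, the decisive step --- excluding $0<|H|<1$ --- is exactly where your proof stops, and neither substitute you offer closes it. The structure you have assembled is not self-contradictory: writing $\kappa_1,\kappa_2$ for the eigenvalues of $A_{\xi_1}$ and $b_\alpha$ for the off-diagonal entry of $A_{\xi_\alpha}$ in the principal basis, the Gauss equation gives $K=1+\kappa_1\kappa_2-\sum_{\alpha\ge 2}b_\alpha^2=2|H|^2-\sum_{\alpha\ge 2}b_\alpha^2$, which is perfectly compatible with the flatness $K=0$ you derived; so ``feeding into Gauss--Codazzi'' alone cannot yield a contradiction. Likewise, the black box you invoke instead --- the Chen--Yau reduction for PMC surfaces --- places a non-pseudo-umbilical PMC surface inside a $3$-dimensional \emph{umbilical} submanifold $\sn^{3}(r)$, $r\le 1$, not necessarily a totally geodesic $\sn^3$, so the classification of proper-biharmonic surfaces in $\sn^3$ from \cite{CMO} cannot be applied as you state it; the case $r<1$ requires a separate composition argument. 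The missing ingredient is the Ricci equation: since $H$ is parallel and $\sn^n$ has constant curvature, $R^\perp(X,Y)H=0$, hence $[A_{\xi_1},A_{\xi_\alpha}]=0$ for every $\alpha$; when $|H|<1$ the eigenvalues of $A_{\xi_1}$ are distinct, so each $A_{\xi_\alpha}$ is \emph{diagonal} in the principal basis, and combined with your off-diagonal conclusion this forces $A_{\xi_\alpha}=0$ for all $\alpha\ge 2$. Then $K=1+\kappa_1\kappa_2=2|H|^2>0$, contradicting flatness (equivalently: the first normal bundle is $\mathrm{span}\{\xi_1\}$ and parallel, so Erbacher's reduction theorem places $M$ in a genuinely totally geodesic $\sn^3$, where \cite{CMO} forces $|H|=1$). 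With that one additional observation your argument becomes a complete and essentially self-contained proof, somewhat more direct than the route through the general PMC classification used in \cite{BMO2}.
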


The usual decomposition of smooth functions over eigenfunctions of the Laplacian, admits a counterpart for Riemannian immersions, particularly relevant on compact manifolds.

\begin{definition}\cite{Chen1,Chen2}
A Riemannian immersion $\psi : (M^m,g) \to \rn^{n+1}$ is called of finite type if it can be expressed as a finite sum of $\rn^{n+1}$-valued eigenmaps of the Laplacian $\Delta$ of $(M,g)$, i.e.
\begin{equation}\label{specdec}
\psi = \psi_0 + \psi_{t_1} + \cdots +\psi_{t_k} ,
\end{equation}
where $\psi_0 \in \rn^{n+1}$ is a constant vector and $\psi_{t_i} : M \to \rn^{n+1}$ are non-constant maps satisfying $\Delta \psi_{t_i} =\lambda_{t_i}\psi_{t_i}$, for $i = 1,\dots,k$. If, in particular, all eigenvalues $\lambda_{t_i}$ are mutually distinct, the submanifold is said to be of $k$-type and \eqref{specdec} is called the spectral decomposition of $\psi$.
\end{definition}

If $M$ is compact, the Riemannian immersion $\psi : (M^m,g) \to \rn^{n+1}$ admits a unique spectral decomposition 
$$\psi = \psi_0 + \sum_{i=1}^{+\infty} \psi_{i}, \quad \Delta \psi_{i} =\lambda_{i}\psi_{i} ,\quad \lambda_i >0 ,$$
where $\psi_0$ is called the centre of mass. Then, it is of $k$-type if and only if $k$ terms of $\{\psi_i \}_{i=1}^{+\infty}$ do not vanish. The centre of mass is the harmonic component of the spectral decomposition.

If $M$ is not compact, the spectral decomposition 
$$\psi = \psi_0 + \sum_{i=1}^{+\infty} \psi_{i}, \quad \Delta \psi_{i} =\lambda_{i}\psi_{i} ,$$
is not guaranteed and the harmonic component is not necessarily constant. However, we may agree that, if we have a decomposition~\eqref{specdec} and there exists $t_{i_0}$ such that 
$\lambda_{t_{i_0}} =0$, then we redenote $\psi_0 + \psi_{t_{i_0}} = \psi_{t_{i_0}}$. With this convention in mind, two decompositions~\eqref{specdec} must agree.

The starting point of this work is the correspondence between proper-biharmonic immersions in spheres and maps of finite type, one or two according to the value of the mean curvature, which by Proposition~\ref{prop1} must be less than one.

\begin{theorem}\cite{BMO1,BMO2}
Let $\phi : (M^m,g) \to \sn^{n}$ be a proper-biharmonic immersion. Denote by $\psi = i\circ\phi : M \to \rn^{n+1}$ the immersion of $M$ in $\rn^{n+1}$, where $i : \sn^{n}\to \rn^{n+1 }$ is the canonical inclusion map. Then
\begin{enumerate}
\item[i)] The map $\psi$ is of $1$-type if and only if $|H|=1$. In this case, $\psi = \psi_0 + \psi_{t_1}$, with $\Delta \psi_{t_1} = 2m \psi_{t_1}$, $\psi_{0}$ is a constant vector. Moreover, $\langle \psi_0 , \psi_{t_1} \rangle =0$ at any point, $|\psi_{0}| =|\psi_{t_1}| = \tfrac{1}{\sqrt{2}}$ and
$\phi_{t_1}:M\to \sn^{n-1}\left(\frac{1}{\sqrt{2}}\right)$ is a minimal immersion.
\item[ii)] The map $\psi$ is of $2$-type if and only if $|H|$ is constant, $|H|\in (0,1)$. In this case $\psi = \psi_{t_1} + \psi_{t_2}$, with $\Delta \psi_{t_1} = m(1-|H|)\psi_{t_1}$,  
$\Delta \psi_{t_2} = m(1+|H|)\psi_{t_2}$ and 
\begin{align*}
\psi_{t_1} &= \tfrac{1}{2} \psi + \tfrac{1}{2|H|} H \, , \,  \psi_{t_2} = \tfrac{1}{2} \psi - \tfrac{1}{2|H|} H .
\end{align*}
Moreover, $\langle \psi_{t_1} , \psi_{t_2} \rangle =0$, $|\psi_{t_1}| =|\psi_{t_2}| = \tfrac{1}{\sqrt{2}}$ and
$$ \phi_{t_i} : (M,g) \to \sn^{n}\left(\tfrac{1}{\sqrt{2}}\right), \quad i=1,2,$$ 
are harmonic maps with constant density energy.
\end{enumerate}
\end{theorem}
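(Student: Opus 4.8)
The plan is to translate the intrinsic biharmonicity of $\phi$ into two elementary identities for the Euclidean-valued map $\psi$ and then read off the spectral decomposition algebraically. First I would record the relation between the two mean curvature vectors: writing $H$ for the mean curvature vector of $\phi$ in $\sn^n$ and $\vec{H}$ for that of $\psi$ in $\rn^{n+1}$, the Gauss formula for the chain $M\subset\sn^n\subset\rn^{n+1}$ gives $\vec{H}=H-\psi$, so that the Beltrami formula $\Delta\psi=-m\vec{H}$ becomes
$$\Delta\psi = m\psi - mH, \qquad H = \psi - \tfrac{1}{m}\Delta\psi,$$
with $\langle H,\psi\rangle=0$ and $|\psi|=1$. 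Next I would rewrite biharmonicity: since $\tau(\phi)=mH$ is normal to $M$ in $\sn^n$ and $R^{\sn^n}(X,Y)Z=\langle Y,Z\rangle X-\langle X,Z\rangle Y$, the curvature term in $\tau_2(\phi)$ collapses to a multiple of $\tau(\phi)$, so that $\phi$ is biharmonic if and only if $\Delta^\phi H = mH$, where $\Delta^\phi$ is the rough Laplacian on $\phi^*T\sn^n$.

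The main step --- and the one I expect to be the real obstacle --- is to convert $\Delta^\phi H=mH$ into the ordinary $\rn^{n+1}$-valued Laplacian of $H$. Because $\langle H, d\phi(X)\rangle=0$, differentiating $H$ in $\rn^{n+1}$ produces no first-order normal (to $\sn^n$) correction, i.e. the $\rn^{n+1}$-derivative of $H$ agrees with $\nabla^\phi H$; the second derivative, however, feels both second fundamental forms. Carefully applying the Gauss--Weingarten equations for $\sn^n\subset\rn^{n+1}$ together with the Weingarten operator $A_H$ of $M\subset\sn^n$, and using $\tr A_H=m|H|^2$, I would obtain
$$\Delta H = \Delta^\phi H - (\tr A_H)\,\psi = mH - m|H|^2\psi,$$
the last equality by biharmonicity. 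Keeping track of which connection each term lives in, and of the sign coming from the sphere's second fundamental form $-\langle\cdot,\cdot\rangle\psi$, is where the bookkeeping is most delicate.

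With these two identities in hand the finite-type structure is immediate in the constant mean curvature case. Assuming $|H|$ is a nonzero constant in $(0,1)$, I would define $\psi_{t_1}=\tfrac12\psi+\tfrac1{2|H|}H$ and $\psi_{t_2}=\tfrac12\psi-\tfrac1{2|H|}H$ and substitute $\Delta\psi=m\psi-mH$ and $\Delta H=mH-m|H|^2\psi$; since $|H|$ is constant this yields at once
$$\Delta\psi_{t_1}=m(1-|H|)\psi_{t_1},\qquad \Delta\psi_{t_2}=m(1+|H|)\psi_{t_2}.$$
The orthogonality $\langle\psi_{t_1},\psi_{t_2}\rangle=0$ and the norms $|\psi_{t_i}|=\tfrac1{\sqrt2}$ follow from $\langle H,\psi\rangle=0$ and $|\psi|=1$; both components are non-constant (neither can vanish, as $\psi\parallel H$ is impossible), and the two eigenvalues are distinct and positive, so $\psi$ is of $2$-type. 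Constant norm plus the eigenmap equation give, via Takahashi's theorem, that each $\phi_{t_i}$ is a harmonic map into $\sn^n(\tfrac1{\sqrt2})$ with constant energy density. When $|H|=1$ the same computation degenerates: the eigenvalue $m(1-|H|)=0$ makes $\psi_{t_1}$ the constant centre $\psi_0$ while $\psi_{t_2}$ has eigenvalue $2m$, which is precisely the $1$-type picture, and here I would invoke Proposition~\ref{prop1} to identify $\phi$ with a minimal immersion into a small hypersphere $\sn^{n-1}(\tfrac1{\sqrt2})$, guaranteeing that $\psi_0$ is genuinely constant.

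For the converse implications I would argue by type. If $\psi$ is of $1$-type, the characterisation of Takahashi quoted above forces $\phi$ to be minimal either in $\sn^n$ --- excluded since $\phi$ is proper-biharmonic, hence not harmonic --- or in a small hypersphere, which by Proposition~\ref{prop1} means $|H|=1$. If $\psi$ is of $2$-type with eigenvalues $\lambda_1<\lambda_2$ and centre $\psi_0$, then $\Delta^2\psi-(\lambda_1+\lambda_2)\Delta\psi+\lambda_1\lambda_2(\psi-\psi_0)=0$; comparing this with the fourth-order identity $\Delta^2\psi=2m\Delta\psi-m^2(1-|H|^2)\psi$ obtained by applying $\Delta$ to $\Delta\psi=m\psi-mH$ and inserting $\Delta H=mH-m|H|^2\psi$, and using that $\psi$ is a full sphere-valued immersion to disentangle the pointwise vector identity into scalar conditions, I would force $\lambda_1+\lambda_2=2m$, $\psi_0=0$ and $\lambda_1\lambda_2=m^2(1-|H|^2)$. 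The last equation shows $|H|$ is constant, with $|H|\in(0,1)$ following from $0<\lambda_1<\lambda_2$ and $\lambda_1+\lambda_2=2m$. Extracting the constancy of $|H|$ from this vector identity, i.e. ruling out the image lying in a proper affine subsphere, is the subtlest point of the converse.
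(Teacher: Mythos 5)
The paper does not actually prove this theorem: it is quoted from \cite{BMO1,BMO2}, so there is no in-house argument to compare with, and your proposal has to be judged on its own. It is, in substance, the standard proof from those references, and its backbone is correct: the Beltrami-type identity $\Delta\psi=m\psi-mH$, the reduction of $\tau_2(\phi)=0$ to $\Delta^{\phi}H=mH$ (the curvature term does collapse, to $-m^2H$), and the conversion $\Delta H=\Delta^{\phi}H-(\tr A_H)\,\psi=mH-m|H|^2\psi$ are all right; from these the forward implications, the orthogonality and norm statements, and the Takahashi-type identification of the $\phi_{t_i}$ as harmonic maps with constant energy density follow exactly as you describe.

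Two places in the converses need tightening. First, for $1$-type: Proposition~\ref{prop1} as stated concerns only hyperspheres of radius $\tfrac{1}{\sqrt2}$, whereas Takahashi's theorem gives minimality in a sphere of a priori unknown radius and centre, so your citation does not yet close the loop. Either invoke the fact (from \cite{CMO}) that a proper-biharmonic immersion which is minimal in a small hypersphere forces that radius to be $\tfrac{1}{\sqrt2}$, or argue that $1$-type already implies CMC: from $mH=(m-\lambda)\psi+\lambda\psi_0$ and $\psi_0\perp d\phi(TM)$ one gets $|H|$ constant, so Proposition~\ref{prop1} gives $|H|\in(0,1]$, and $|H|\in(0,1)$ is excluded because it would make $\psi$ of $2$-type, contradicting uniqueness of spectral decompositions. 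Second, for $2$-type, the ``disentangling'' has a definite logical order: the component of your vector identity tangent to $M$ gives $\lambda_1\lambda_2\,\psi_0^{\top}=0$, hence $\langle\psi_0,\psi\rangle$ is constant, and only then does the $\psi$-component force $|H|$ constant; the values $\lambda_i=m(1\mp|H|)$ and $\psi_0=0$ then come from the forward direction together with uniqueness of the decomposition (valid without compactness, since eigenmaps for distinct eigenvalues are linearly independent). You should also dispose of the degenerate case $\lambda_1=0$ permitted by the paper's convention on non-compact $M$: there the same comparison yields $\lambda_2=2m$ and $|H|\equiv 1$, contradicting $2$-type. These are repairable details rather than flaws in the approach.
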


Harmonic maps are characterised as solutions to (a system of) second-order elliptic equations of the same symbol as the Laplacian on functions, and a unique continuation property for harmonic maps is to be expected. This was proved by J.~H.~Sampson in 1978.

\begin{theorem} \cite{Sampson}
Let $\phi_1, \phi_2 : (M^m ,g) \to (N^n ,\tilde{g})$ be two harmonic maps. If $\phi_1$ and $\phi_2$ agree on an open subset of $M$, then they agree everywhere. In particular a harmonic map constant on an open subset is a constant map.
\end{theorem}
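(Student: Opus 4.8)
The plan is to reduce the statement to a unique continuation property for solutions of a second-order elliptic differential inequality, and then to invoke the classical theorem of Aronszajn. Throughout I assume $M$ connected, as is implicit in a statement of this kind.

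First I would set up a connectedness argument. Let $\Omega$ denote the set of points of $M$ possessing a neighbourhood on which $\phi_1$ and $\phi_2$ coincide. By hypothesis $\Omega$ is non-empty, and it is open by construction; since $M$ is connected it suffices to prove that $\Omega$ is closed, for then $\Omega = M$ and the two maps agree everywhere. So fix $x_0 \in \overline{\Omega}$. By continuity $\phi_1(x_0) = \phi_2(x_0) =: p \in N$, and I may choose a geodesic normal coordinate chart on $N$ centred at $p$ together with a coordinate chart on $M$ around $x_0$; again by continuity both maps take values in this target chart on a sufficiently small ball $B$ around $x_0$.

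Next I would write the harmonic map equation in these coordinates. Denoting by $\phi^\alpha$ the components of a map, by $\Gamma^k_{ij}$ the Christoffel symbols of $g$ and by $\tilde\Gamma^\alpha_{\beta\gamma}$ those of $\tilde g$, harmonicity reads
$$ g^{ij}\bigl(\partial_i\partial_j\phi^\alpha - \Gamma^k_{ij}\partial_k\phi^\alpha\bigr) + g^{ij}\,\tilde\Gamma^\alpha_{\beta\gamma}(\phi)\,\partial_i\phi^\beta\,\partial_j\phi^\gamma = 0 . $$
Setting $w^\alpha = \phi_1^\alpha - \phi_2^\alpha$ and subtracting the two systems, the linear second-order part produces $g^{ij}\partial_i\partial_j w^\alpha$ plus lower-order terms, while the difference of the quadratic first-order terms is handled by adding and subtracting: using the smoothness, hence local Lipschitz bounds, of $\tilde\Gamma$ and the boundedness of the first derivatives of $\phi_1,\phi_2$ on $B$, one rewrites
$$ \tilde\Gamma^\alpha_{\beta\gamma}(\phi_1)\partial_i\phi_1^\beta\partial_j\phi_1^\gamma - \tilde\Gamma^\alpha_{\beta\gamma}(\phi_2)\partial_i\phi_2^\beta\partial_j\phi_2^\gamma $$
as a sum of terms each carrying a factor $w$ or $\partial w$ against a bounded coefficient. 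The outcome is a pointwise differential inequality on $B$ of the form
$$ |\Delta_g w| \le C\bigl(|w| + |\nabla w|\bigr), $$
with $C$ depending only on the chosen charts and on $C^1$ bounds for the two maps.

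Finally I would apply Aronszajn's unique continuation theorem, in its vector-valued version of Aronszajn--Krzywicki--Szarski, to the system governed by this inequality. Since $x_0 \in \overline{\Omega}$, the function $w$ vanishes identically on a non-empty open subset of $B$, hence vanishes to infinite order at $x_0$; the theorem then forces $w \equiv 0$ on a neighbourhood of $x_0$, that is $x_0 \in \Omega$. This proves $\Omega$ closed and completes the argument. The ``in particular'' statement follows by taking $\phi_2$ to be the constant map equal to the common value, which is harmonic and agrees with $\phi_1$ on the given open set. The main obstacle is purely analytic: assembling the differential inequality with the precise structure required by the unique continuation principle — in particular controlling the difference of the nonlinear target terms by $|w|+|\nabla w|$ within a single fixed coordinate system — and invoking the correct system version of Aronszajn's theorem.
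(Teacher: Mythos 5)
Your argument is correct, and it is essentially the classical proof: the paper itself offers no proof of this statement (it is quoted with a citation to Sampson's 1978 article), and Sampson's original argument is precisely the one you reconstruct, namely writing both maps in a common chart around a closure point of the coincidence set, subtracting the harmonic map systems to obtain a differential inequality $|\Delta_g w|\le C\bigl(|w|+|\nabla w|\bigr)$, and invoking the Aronszajn--Krzywicki--Szarski unique continuation theorem together with an open-closed connectedness argument. The only point worth stating explicitly is that $w$ and all its derivatives vanish on the open set $\Omega\cap B$ and hence, by continuity, to infinite order at $x_0\in\overline{\Omega}$, which is exactly the hypothesis the unique continuation theorem needs; with that observation your proof is complete.
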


The last part of the theorem was generalised to biharmonic maps.

\begin{theorem}\cite{CMO}
Let $\phi : (M^m ,g) \to (N^n ,\tilde{g})$ be a biharmonic map. If $\phi$ is harmonic on an open subset then it is a harmonic map on the whole of $M$.
\end{theorem}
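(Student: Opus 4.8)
The plan is to read the biharmonic equation as a linear second-order elliptic equation satisfied by the tension field $\tau(\phi)$, regarded as a section of the pull-back bundle $\phi^{-1}TN$, and then to invoke a unique continuation principle of Aronszajn type. Writing $\xi = \tau(\phi)$ and denoting by $\Delta^{\phi} = -\tr \nabla^{\phi}\nabla^{\phi}$ the rough Laplacian on $\phi^{-1}TN$ appearing in the bitension field, the Euler-Lagrange equation recalled in the introduction reads $\Delta^{\phi}\xi = -\tr R^N(d\phi,\xi)d\phi$. The key structural observation is that the right-hand side is tensorial and \emph{linear of order zero} in $\xi$: at each point it is obtained by contracting $\xi$ against the smooth tensor $R^N$ evaluated along $\phi$ and against $d\phi$, with no derivatives of $\xi$ involved. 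Consequently, on any relatively compact open set, $\xi$ obeys the pointwise estimate
$$|\Delta^{\phi}\xi| = |\tr R^N(d\phi,\xi)d\phi| \le C\,|\xi|,$$
where $C$ is controlled by local bounds on $|d\phi|^2$ and on the curvature of $N$ along the image of $\phi$, both finite because $\phi$ is smooth.

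Next I would localise and trivialise. Fixing a point $p\in M$, I choose a coordinate chart around $p$ together with a local orthonormal frame of $\phi^{-1}TN$, so that $\xi$ is represented by an $\rn^{n}$-valued function $(\xi^1,\dots,\xi^n)$. In this trivialisation the operator $\Delta^{\phi}$ has the scalar Laplace-Beltrami operator as its \emph{diagonal} principal part, the remaining contributions being first order in $\xi$ with smooth coefficients arising from the connection $\nabla^{\phi}$ and the Christoffel symbols of $g$. Combining this with the zeroth-order bound above yields a differential inequality of the form
$$|\Delta \xi^a| \le C\Big(\sum_{b} |\xi^b| + \sum_{b} |\nabla \xi^b|\Big), \qquad a = 1,\dots,n,$$
which is exactly the structure to which the unique continuation theorem for second-order elliptic systems with diagonal principal symbol applies.

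The conclusion is then immediate. Since $\phi$ is harmonic on an open subset $U$, the section $\xi$ vanishes identically on $U$, hence vanishes to infinite order at every boundary point of $U$. The unique continuation property forces $\xi \equiv 0$ on the connected manifold $M$; that is, $\tau(\phi) = 0$ everywhere, so $\phi$ is harmonic throughout $M$.

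I expect the only genuine subtlety to lie in the passage from the intrinsic, bundle-valued equation to a scalar elliptic \emph{system} amenable to a classical unique continuation theorem. Because the principal part is diagonal---each component seeing only the Laplace-Beltrami operator---and the coupling enters solely through lower-order terms, the system case reduces to the standard scalar statement, and no analytic difficulty arises beyond recording the local boundedness of the coefficients. Crucially, the curvature contribution, being algebraic in $\xi$, never enters the principal symbol and therefore never threatens ellipticity, which is what makes the whole argument go through.
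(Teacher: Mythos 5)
Your proposal is correct and is essentially the argument of the cited source: the paper itself states this theorem as a quotation from \cite{CMO} without reproducing a proof, and the proof there proceeds exactly as you describe, by viewing the bitension equation as a second-order elliptic equation for $\tau(\phi)$ whose right-hand side is of order zero in $\tau(\phi)$, localising to obtain the differential inequality $|\Delta \xi^a| \le C\big(\sum_b |\xi^b| + \sum_b |\nabla \xi^b|\big)$, and invoking Aronszajn-type unique continuation for systems with diagonal principal symbol on the connected manifold $M$.
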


A deep difference between harmonic and biharmonic maps is that the latter are solutions to (a system of) fourth-order elliptic equations for which no maximum principle exists. Therefore, a biharmonic analogue of Sampson's Theorem is out of reach but, if combined with CMC, thanks to their type decomposition, we can prove a unique continuation for Riemannian immersions into spheres.

\begin{theorem}
Let $\phi_1, \phi_2 : (M^m ,g) \to \sn^n$ be two CMC proper-biharmonic immersions. If $\phi_1$ and $\phi_2$ agree on an open subset of $M$, then they agree everywhere.
\end{theorem}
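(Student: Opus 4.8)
The plan is to reduce the fourth-order problem to Sampson's second-order unique continuation by exploiting the type decomposition of \cite{BMO1,BMO2}. First I would record that, since $\phi_1$ and $\phi_2$ coincide on the open set $U$, all their derivatives agree on $U$; in particular the induced second fundamental forms and mean curvature vector fields coincide there, so the two constant values $|H_1|$ and $|H_2|$ must be equal, say to $h$. By Proposition~\ref{prop1} we have $h\in(0,1]$, and the value of $h$ fixes the type of both immersions: writing $\psi_j = i\circ\phi_j$, both are of $1$-type if $h=1$ and both of $2$-type if $h\in(0,1)$.

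Next I would extract the eigencomponents explicitly and check that they agree on $U$. In the $2$-type case the decomposition theorem gives $\psi_{t_1,j} = \tfrac12\psi_j + \tfrac{1}{2h}H_j$ and $\psi_{t_2,j} = \tfrac12\psi_j - \tfrac{1}{2h}H_j$; since $\psi_1=\psi_2$ and $H_1=H_2$ on $U$, the components satisfy $\psi_{t_i,1}=\psi_{t_i,2}$ on $U$ for $i=1,2$. In the $1$-type case, $\psi_j=\psi_{0,j}+\psi_{t_1,j}$ with $\psi_{0,j}$ constant and $\Delta\psi_{t_1,j}=2m\psi_{t_1,j}$, so $\psi_{t_1,j}=\tfrac1{2m}\Delta\psi_j$; this forces $\psi_{t_1,1}=\psi_{t_1,2}$ on $U$, and then $\psi_{0,1}=\psi_1-\psi_{t_1,1}=\psi_2-\psi_{t_1,2}=\psi_{0,2}$ on $U$, hence everywhere as these are constant vectors. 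In either case the associated maps $\phi_{t_i,j}$ into a sphere of radius $\tfrac1{\sqrt2}$ are harmonic (minimal immersions in the $1$-type case, harmonic maps with constant density energy in the $2$-type case) and, with centres and radii now matching, share the same target. Since they agree on $U$, Sampson's theorem \cite{Sampson} yields $\phi_{t_i,1}=\phi_{t_i,2}$ on all of $M$, whence $\psi_{t_i,1}=\psi_{t_i,2}$ globally. Reassembling, $\psi_1=\psi_2$ on $M$ (adding the common constant vector in the $1$-type case), and injectivity of $i$ gives $\phi_1=\phi_2$.

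The main obstacle I anticipate is bookkeeping rather than analysis: one must be sure that agreement on an open set propagates to the geometric data, i.e.\ that $H_1=H_2$ on $U$, so that the explicit eigencomponent formulas can be compared, and---more delicately in the $1$-type case---that the two harmonic components really do take values in a single sphere before Sampson's theorem is invoked. The order of steps matters here: the constant centres $\psi_{0,j}$ must be identified first, which is why I would settle them before passing to the harmonic maps. Once the reduction to a pair of harmonic maps with common domain and target is in place, Sampson's unique continuation does the remaining work, and the fourth-order nature of the biharmonic equation---for which no such principle is available directly---never has to be confronted.
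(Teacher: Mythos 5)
Your proof is correct, and its skeleton is the same as the paper's: pass to $\psi_j=i\circ\phi_j$, use the $1$-type/$2$-type decomposition of \cite{BMO1,BMO2}, show the eigencomponents already agree on $U$, and then invoke a unique continuation principle to propagate this to all of $M$. The one genuine difference is the tool used in the last step. The paper stays linear: since $\psi_{1,t_i}$ and $\psi_{2,t_i}$ satisfy the \emph{same} eigenvalue equation $\Delta u=\lambda_{t_i}u$, their difference is an $\rn^{n+1}$-valued solution of a second-order linear elliptic equation vanishing on an open set, hence vanishes identically (Aronszajn-type unique continuation for eigenfunctions); no geometric interpretation of the components is needed, and in particular no discussion of target spheres. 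You instead route the conclusion through Sampson's theorem, viewing the components $\phi_{t_i,j}$ as harmonic maps into spheres of radius $\tfrac{1}{\sqrt2}$; this works, but it is exactly what forces the extra bookkeeping you flag in the $1$-type case, where the target $\sn^{n-1}\bigl(\tfrac{1}{\sqrt2}\bigr)$ depends on the centre $\psi_{0,j}$, so the constant vectors must be identified before Sampson applies (you do this correctly, via $\psi_{t_1,j}=\tfrac{1}{2m}\Delta\psi_j$). The trade-off: your argument leans on the nonlinear harmonic-map result the paper states just before the theorem, which fits the paper's narrative nicely, while the paper's own argument is slightly more economical, since comparing solutions of one linear equation makes the target-identification issue disappear. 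Both ultimately rest on the same elliptic unique continuation phenomenon.
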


\begin{proof}
Let $U$ be an open subset of $M$ such that $\phi_1$ and $\phi_2$ agree on $U$. Then the mean curvature vector fields $H_1$ and $H_2$ agree on $U$ and therefore $|H_1|=|H_2|$ on M. For the case $|H_1|=|H_2|=1$, we denote
$$\psi_1 = \psi_{1,0} + \psi_{1,t_1} , \quad \psi_2 = \psi_{2,0} + \psi_{2,t_1} ,$$
where $\psi_i = i\circ \phi_i : M \to \rn^{n+1}$, $i=1,2$. As $\psi_1$ and $\psi_2$ agree on $U$, we deduce that $\psi_{1,0}= \psi_{2,0}$ and $\psi_{1,t_1}= \psi_{2,t_1}$ on $U$. Since $\Delta \psi_{1,t_1}=2m\psi_{1,t_1}$ and $\Delta \psi_{2,t_1}=2m\psi_{2,t_1}$, therefore $\psi_{1,t_1}= \psi_{2,t_1}$ on $M$ and $\phi_1=\phi_2$ on the whole of $M$. The case $|H_1|=|H_2|\in (0,1)$ is analogous.
\end{proof}

The next result on CMC proper-biharmonic immersions follows directly from a well-known property of harmonic maps.

\begin{proposition}
Let $\phi : (M^m ,g) \to \sn^n$ be a CMC proper-biharmonic immersion. If an open subset of $M$ is mapped into a totally geodesic $\sn^{n'}$, $0<n' <n$, then $\phi(M)\subset \sn^{n'}$.
\end{proposition}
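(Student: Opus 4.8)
The plan is to pass to the ambient Euclidean space and combine the spectral (type) decomposition recalled above with unique continuation for eigenfunctions of the Laplacian, which is the relevant ``well-known property''. First I would record that a totally geodesic $\sn^{n'}\subset\sn^n$ is exactly the intersection $\sn^n\cap V$ for a linear subspace $V\subset\rn^{n+1}$ of dimension $n'+1$ through the origin. Writing $\psi = i\circ\phi$, the hypothesis that $\phi(U)\subset\sn^{n'}$ is then equivalent to $\psi(U)\subset V$, that is, $\la\psi,e\ra =0$ on $U$ for every $e$ in the orthogonal complement $V^\perp$. The goal becomes to show that each such linear function vanishes on all of $M$.

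Next I would feed in the type decomposition. Fix $e\in V^\perp$ and set $f_i = \la\psi_{t_i},e\ra$; since $\Delta\psi_{t_i} = \lambda_{t_i}\psi_{t_i}$, each $f_i$ solves $\Delta f_i = \lambda_{t_i} f_i$. In the $2$-type case, where $|H|\in(0,1)$, the identity $\psi = \psi_{t_1}+\psi_{t_2}$ gives $f_1 + f_2 = 0$ on $U$; applying $\Delta$ and using the distinct eigenvalues $\lambda_{t_1}=m(1-|H|)\neq m(1+|H|)=\lambda_{t_2}$ yields $(\lambda_{t_1}-\lambda_{t_2})f_1 = 0$, hence $f_1 = f_2 = 0$ on $U$. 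In the $1$-type case, where $|H|=1$, we have $\psi=\psi_0+\psi_{t_1}$ with $\psi_0$ constant, so $f_1 = \la\psi_{t_1},e\ra = -\la\psi_0,e\ra$ is constant on $U$; then $\Delta f_1 = 0$ while $\Delta f_1 = 2m f_1$, and $2m\neq 0$ forces $f_1 = 0$ on $U$ (and, incidentally, $\psi_0\in V$). Thus in either case every component function $f_i$ vanishes on the open set $U$.

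Finally I would invoke unique continuation: each $f_i$ is a solution of the second-order elliptic equation $\Delta f_i = \lambda_{t_i} f_i$, so its vanishing on the open set $U$ forces $f_i\equiv 0$ on the connected manifold $M$; this is precisely the analytic content underlying Sampson's theorem, applied here to the harmonic components $\phi_{t_i}$. Hence $\la\psi_{t_i},e\ra = 0$ on $M$ for every $e\in V^\perp$, so $\psi_{t_i}(M)\subset V$ for each $i$, and therefore $\psi(M)\subset V$, i.e. $\phi(M)\subset\sn^n\cap V=\sn^{n'}$.

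I expect no deep obstacle, since the statement is essentially a corollary of the decomposition theorem. The only points requiring care are the bookkeeping of the two type cases and, in particular, the treatment of the constant harmonic component in the $1$-type case, where it is exactly the nonvanishing of the eigenvalue $2m$ that kills the constant; likewise, the peeling of the two eigencomponents in the $2$-type case is legitimate precisely because the eigenvalues $m(1\pm|H|)$ are distinct for $|H|>0$.
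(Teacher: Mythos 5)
Your proof is correct, but it substitutes a different analytic engine for the one the paper has in mind. The paper gives no written argument: the sentence before the proposition says it ``follows directly from a well-known property of harmonic maps'', meaning it is intended as an immediate corollary of Sampson-type unique continuation applied to the harmonic components $\phi_{t_1},\phi_{t_2}:(M,g)\to\sn^{n}\bigl(\tfrac{1}{\sqrt{2}}\bigr)$ of the type decomposition: writing $\sn^{n'}=\sn^n\cap V$, once one knows $\psi_{t_i}(U)\subset V$ one either quotes the fact that a harmonic map sending an open set into a totally geodesic submanifold sends all of $M$ into it, or applies the quoted Sampson theorem to the pair $\phi_{t_i}$ and $\sigma\circ\phi_{t_i}$, where $\sigma$ is the orthogonal reflection of $\rn^{n+1}$ in $V$ (an isometry of the sphere whose fixed-point set cuts out $V$). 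In that route the containment $\psi_{t_i}(U)\subset V$ comes from the geometric remark that total geodesy of $\sn^{n'}$ forces $H$ to be tangent to it, so $\psi_{t_i}=\tfrac12\psi\pm\tfrac{1}{2|H|}H$ stays in $V$ along $U$. You work instead entirely at the scalar level: your peeling of the eigencomponents on $U$ by applying $\Delta$ and using $m(1-|H|)\neq m(1+|H|)$ is a clean algebraic substitute for that tangency remark, and you then invoke unique continuation (Aronszajn) for solutions of $\Delta f=\lambda f$ rather than any harmonic-map theorem. What your version buys is self-containedness --- only the eigenfunction equations from the decomposition theorem plus classical scalar unique continuation, with the $|H|=1$ constant-component case handled explicitly; what the paper's version buys is brevity, the conclusion being one application of a known harmonic-map result. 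Both arguments implicitly require $M$ connected, and your bookkeeping of the two type cases is exactly what either route needs.
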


Minimal immersions into spheres can be summed up to produce new maps into higher dimensional spheres and they will turn out to describe thoroughly one case of CMC proper-biharmonic immersions, namely those with positive constant Gaussian curvature (see Proposition~\ref{prop8}).

Let $\phi_1 : (M^m , g) \to \sn^{n_1}(r_1)$ and $\phi_2 : (M^m , g) \to \sn^{n_2}(r_2)$ be two minimal immersions, where $r_1$ and $r_2$ are positive constants. Consider the map
\begin{align*}
\phi &: M^m \to \sn^{n_1 + n_2 +1} \, , \, \phi(p) = (\alpha \phi_{1}(p) , \beta \phi_{2}(p)) ,
\end{align*}
where $\alpha$ and $\beta$ are real numbers such that $\alpha^2 r_1^2 + \beta^2 r_2^2 =1$.

The map $\phi$ is a Riemannian immersion if and only if $\alpha^2 + \beta^2 =1$.

Assume that
\begin{equation*}
\begin{cases}
\alpha^2 r_1^2 + \beta^2 r_2^2 =1 \\
\alpha^2 + \beta^2 =1
\end{cases}
\end{equation*}

It is not difficult to see that $\phi$ is also pseudo-umbilical, i.e.
$$ A_{H} = |H|^2 I = \left( \frac{\alpha^2}{r_1^2} + \frac{\beta^2}{r_2^2} -1\right) I .$$

Straightforward computations yield easy conditions on the parameters so that the resulting diagonal sum be indeed a Riemannian immersion into a sphere.

\begin{proposition} \label{prop2}
The Riemannian immersion
\begin{align*}
\phi &: M^m \to \sn^{n_1 + n_2 +1} \, , \, \phi(p) = (\alpha \phi_{1}(p) , \beta \phi_{2}(p)) ,
\end{align*}
is proper-biharmonic if and only if 
\begin{equation*}
\begin{cases}
\alpha^2 = \frac{1}{2r_1^2} , \quad  \beta^2 =\frac{1}{2r_2^2}, \\
\frac{1}{r_1^2} + \frac{1}{r_2^2} = 2, \quad r_1\neq r_2.
\end{cases}
\end{equation*}
In this case $|H|^2=1 - \frac{1}{r^2_1r^2_2}$.
\end{proposition}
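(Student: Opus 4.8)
The plan is to reduce the proper-biharmonicity of the diagonal sum to the finite-type description given by the two-type theorem, feeding it with Takahashi's equation for the two minimal factors. First I would record the spectral data of $\psi=i\circ\phi=(\alpha\psi_1,\beta\psi_2)$, where $\psi_j=i_j\circ\phi_j:M\to\rn^{n_j+1}$. Since $\phi_j$ is minimal in $\sn^{n_j}(r_j)$, Takahashi's theorem~\cite{Takahashi} gives $\Delta\psi_j=\tfrac{m}{r_j^2}\psi_j$, so that
$$\psi=\psi_{t_1}+\psi_{t_2},\quad \psi_{t_1}=(\alpha\psi_1,0),\quad \psi_{t_2}=(0,\beta\psi_2),$$
with $\Delta\psi_{t_j}=\lambda_j\psi_{t_j}$ and $\lambda_j=\tfrac{m}{r_j^2}$. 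As $\psi_{t_1}$ and $\psi_{t_2}$ lie in orthogonal subspaces of $\rn^{n_1+n_2+2}$ we have $\langle\psi_{t_1},\psi_{t_2}\rangle=0$, while $|\psi_{t_1}|^2=\alpha^2 r_1^2$ and $|\psi_{t_2}|^2=\beta^2 r_2^2$, summing to $1$ by the standing constraint $\alpha^2 r_1^2+\beta^2 r_2^2=1$.

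Next I would match this decomposition against the two-type theorem. If $r_1=r_2$ then $\lambda_1=\lambda_2$; combined with $\alpha^2+\beta^2=1$ the constraints force $r_1=r_2=1$, so $\Delta\psi=m\psi$ and, by Takahashi again, $\phi$ is minimal in $\sn^{n_1+n_2+1}$, hence harmonic. Thus properness forces $r_1\neq r_2$, and $\psi$ is genuinely of $2$-type (both $\alpha,\beta\neq0$). By the correspondence of~\cite{BMO1,BMO2}, $\phi$ is then proper-biharmonic precisely when the eigenvalues obey $\lambda_1+\lambda_2=2m$ together with the mass-symmetric normalisation $|\psi_{t_1}|^2=|\psi_{t_2}|^2=\tfrac12$. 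The eigenvalue relation is $\tfrac{1}{r_1^2}+\tfrac{1}{r_2^2}=2$, and a short elimination shows that, under the two standing constraints and $r_1\neq r_2$, this is in fact equivalent to $\alpha^2=\tfrac{1}{2r_1^2}$, $\beta^2=\tfrac{1}{2r_2^2}$ (which in turn return $|\psi_{t_j}|^2=\tfrac12$). This is exactly the announced system. Finally, substituting $\alpha^2=\tfrac{1}{2r_1^2}$, $\beta^2=\tfrac{1}{2r_2^2}$ into $|H|^2=\tfrac{\alpha^2}{r_1^2}+\tfrac{\beta^2}{r_2^2}-1$ and using $\tfrac{1}{r_1^2}+\tfrac{1}{r_2^2}=2$ yields $|H|^2=1-\tfrac{1}{r_1^2 r_2^2}$.

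The main obstacle is the sufficiency (``if'') half, that the $2$-type data actually forces biharmonicity, since the cited theorem reads most directly as a list of necessary conditions. I would secure this either by invoking the full equivalence in~\cite{BMO1,BMO2}, or, to stay self-contained, by direct substitution in the bitension field. From the Beltrami relation $\tau(\phi)=m\psi-\Delta\psi=\big((m-\lambda_1)\alpha\psi_1,(m-\lambda_2)\beta\psi_2\big)$ one sees that $\tau(\phi)$ is a combination of the (normal) position vectors of the two factors, so $\tau(\phi)\neq0$ unless $\lambda_1=\lambda_2=m$, i.e. unless $r_1=r_2$; this is the properness bookkeeping. Using that for the unit sphere $\tr R^{\sn^n}(d\phi,\tau(\phi))d\phi=-m\tau(\phi)$, the bitension field is $\tau_2(\phi)=-\Delta\tau(\phi)+m\tau(\phi)$, and computing the rough Laplacian $\Delta\tau(\phi)$ on the split normal bundle via the eigen-relations shows $\tau_2(\phi)=0$ exactly when $\lambda_1+\lambda_2=2m$. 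Carrying out this rough-Laplacian computation on the product normal bundle is the only genuinely technical point; everything else is Takahashi's equation plus the elementary algebra of the three scalar constraints.
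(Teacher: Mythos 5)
Your proposal is correct, but it is a hybrid: one half of it is genuinely different from the paper's argument, while the other half coincides with it. The paper proves both implications simultaneously by a single direct computation: using Takahashi's relation $\Delta\psi_j=\tfrac{m}{r_j^2}\psi_j$ it writes out $\tau(\phi)$ and then the full bitension field
\begin{equation*}
\tau_{2}(\phi) = m^2 \bigl( \alpha^2 a_1 + \beta^2 a_2\bigr)\, (\alpha \phi_{1} , \beta \phi_{2} ) + m^2\bigl( \alpha a_1^2 \phi_1 , \beta a_2^2 \phi_2\bigr), \qquad a_j = 1-\tfrac{1}{r_j^2},
\end{equation*}
from which $\tau_2(\phi)=0$, $\tau(\phi)\neq 0$ force $a_1^2=a_2^2$; the branch $a_1=a_2$ is the harmonic one (under the standing constraints it gives $r_1=r_2=1$), and the branch $a_1=-a_2$ is exactly $\tfrac{1}{r_1^2}+\tfrac{1}{r_2^2}=2$ together with $\alpha^2=\tfrac{1}{2r_1^2}$, $\beta^2=\tfrac{1}{2r_2^2}$. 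Your necessity argument avoids this computation altogether, routing instead through the $2$-type correspondence of \cite{BMO1,BMO2} and the uniqueness of finite-type decompositions, which hands you the eigenvalue sum $2m$ and the normalisation $|\psi_{t_i}|=\tfrac{1}{\sqrt 2}$ directly; this is a genuinely different, softer derivation, at the mild cost of invoking the decomposition-uniqueness convention on a possibly non-compact $M$ (harmless here, both eigenvalues being positive). Your sufficiency argument, by contrast, \emph{is} the paper's proof. Two points you left implicit do check out. First, your elimination claim is true: under $\alpha^2+\beta^2=1=\alpha^2r_1^2+\beta^2r_2^2$ and $r_1\neq r_2$, the relation $\tfrac{1}{r_1^2}+\tfrac{1}{r_2^2}=2$ is equivalent to the pair $\alpha^2=\tfrac{1}{2r_1^2}$, $\beta^2=\tfrac{1}{2r_2^2}$. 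Second --- and this is the step to be careful with --- the rough Laplacian on $\phi^{-1}T\sn^{n_1+n_2+1}$ is \emph{not} the componentwise Laplacian of $\rn^{n_1+n_2+2}$-valued maps: for a section $V$ normal to $M$ and tangent to the sphere one has $\Delta^{\phi}V=\Delta V+\la\tau(\phi),V\ra\psi$, hence $\tau_2(\phi)=-\Delta \tau(\phi)-|\tau(\phi)|^2\psi+m\tau(\phi)$, and the correction $-|\tau(\phi)|^2\psi$ is precisely the paper's first ($\psi$-direction) term, its coefficient reducing to $m^2(\alpha^2 a_1+\beta^2 a_2)$ only after both standing constraints are used; dropping it would wrongly leave only harmonic solutions. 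Since you flag exactly this as the technical point and your stated outcome of the computation --- $\tau_2(\phi)=0$ iff $\lambda_1+\lambda_2=2m$, once $\lambda_1\neq\lambda_2$ is ensured --- is the correct one, the argument goes through, including the final identity $|H|^2=1-\tfrac{1}{r_1^2r_2^2}$, which follows from the pseudo-umbilical formula exactly as you say.
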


\begin{proof}
By a straightforward computation we get:
\begin{align*}
\tau(\phi) &= m\left( \alpha\left( 1 - \frac{1}{r_1^2}\right) \phi_1 , \beta\left( 1 - \frac{1}{r_2^2}\right) \phi_2\right) ,\\
\tau_{2}(\phi) &= 
m^2 \left( \alpha^2 \left( 1 - \frac{1}{r_1^2}\right) + \beta^2 \left( 1 - \frac{1}{r_2^2}\right)\right) (\alpha \phi_{1} , \beta \phi_{2} ) \\
& + m^2\left( \alpha\left( 1 - \frac{1}{r_1^2}\right)^2 \phi_1 , \beta\left( 1 - \frac{1}{r_2^2}\right)^2 \phi_2\right) .
\end{align*}
\end{proof}

\section{CMC proper-biharmonic surfaces in $\sn^n$}

The case of CMC proper-biharmonic surfaces with mean curvature $h\in (0,1)$ in unit Euclidean spheres $\sn^n$ is, as expected, better understood than the general case of an arbitrary dimension, and several rigidity phenomenons can be observed. 

We first show in Proposition~\ref{prop5} that if such a surface has constant Gaussian curvature, it must be non-negative and, in case of compactness, less than one and previous work, based on the stress-energy tensor of biharmonic maps, has led, for spheres and planes, to even more stringent restrictions, cf. Propositions~\ref{prop6} and \ref{prop7}. 

Moreover, results from Miyata~\cite{Miyata} can be re-examined under the condition of biharmonicity, to yield very precise descriptions of CMC proper-biharmonic immersions from a surface of positive Gaussian curvature (Proposition~\ref{prop8}) or flat (Theorem~\ref{thmMiyata}), while negative Gaussian curvature is ruled out by a result of Bryant~\cite{Bryant,Miyata}.

Since diagonal sums of Boruvka spheres give all the CMC proper-biharmonic surfaces of positive Gaussian curvature in spheres, the really interesting case lies in flat surfaces.

When the target has dimension five, they are entirely described by parameters whose inter-dependence is given in Lemma~\ref{lemma1}, Theorem~\ref{thm6} and Lemma~\ref{lemma2}.

An immediate consequence of these results is that for any $h \in (0,1)$, there exist CMC (equal to $h$) proper-biharmonic immersions of a plane (Corollary~\ref{cor2}) and, because of the continuous range of the parameters, infinitely many will admit a periodicity and therefore quotient to a cylinder in $\sn^5$ (Corollary~\ref{cor3}).

Furthermore, a necessary and sufficient condition on the value of $h$ is found in Theorem~\ref{thm7}, to the existence of a proper-biharmonic torus of CMC $h$ in $\sn^5$.

The first result is based on a classical inequality on the eigenvalues of the Laplacian which constrains the geometry of CMC proper-biharmonic immersions.

\begin{proposition}\label{prop5}
Let $\phi : M^2 \to \sn^n$ be a CMC proper-biharmonic immersion. Assume $M^2$ has constant Gaussian curvature $K^M$ and $|H|\in(0,1)$. Then $K^M \geq 0$. Moreover, if $M$ is compact then $K^M\in [0,1)$.
\end{proposition}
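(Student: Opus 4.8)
The plan is to feed the hypothesis $|H|\in(0,1)$ into the $2$-type decomposition recalled above. Since $\phi$ is CMC proper-biharmonic with $|H|\in(0,1)$ and $m=2$, the immersion $\psi=i\circ\phi$ is of $2$-type with eigenvalues $\lambda_{t_1}=2(1-|H|)$ and $\lambda_{t_2}=2(1+|H|)$, and the components $\phi_{t_i}:M^2\to\sn^n(\tfrac{1}{\sqrt2})$ are harmonic with constant energy density. Using $|\psi_{t_i}|^2=\tfrac12$, the eigenmap relation $\langle\Delta\psi_{t_i},\psi_{t_i}\rangle=|d\psi_{t_i}|^2$ pins these densities down to $|d\phi_{t_1}|^2=1-|H|$ and $|d\phi_{t_2}|^2=1+|H|$, which is the quantitative input for everything that follows.

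For the upper bound I would apply the Bochner (Eells--Sampson) formula to each harmonic component $\phi_{t_i}$. As the domain is a surface, $\mathrm{Ric}^M=K^M g$, and as the target $\sn^n(\tfrac1{\sqrt2})$ has constant curvature $2$, the Weitzenb\"ock identity reads $\tfrac12\Delta|d\phi_{t_i}|^2=|\nabla d\phi_{t_i}|^2+K^M|d\phi_{t_i}|^2-2\bigl(|d\phi_{t_i}|^4-|\phi_{t_i}^*\tilde g|^2\bigr)$. Constancy of the energy density kills the left-hand side, and estimating $|d\phi_{t_i}|^4-|\phi_{t_i}^*\tilde g|^2\le\tfrac12|d\phi_{t_i}|^4$ together with $|\nabla d\phi_{t_i}|^2\ge0$ yields $K^M\le|d\phi_{t_i}|^2$; taking $i=1$ gives $K^M\le 1-|H|<1$. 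For compact $M$ this also follows from the classical eigenvalue comparison $\lambda_1(M)\le\lambda_{t_1}$, the components of $\psi_{t_1}$ being genuine $\lambda_{t_1}$-eigenfunctions: granting $K^M\ge0$, a compact $M$ of positive constant curvature is a round $\sn^2(1/\sqrt{K^M})$ (or $\mathbb{RP}^2$) with first eigenvalue $2K^M$, so $2K^M\le2(1-|H|)$, while $K^M=0$ is trivially admissible; hence $K^M\in[0,1)$.

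The delicate half is the non-negativity $K^M\ge0$, and here the positive curvature of the target works against us: in the Bochner identity the curvature term carries the sign that produces upper, not lower, bounds on $K^M$. I would therefore argue through the Gauss equation of $\phi$ in $\sn^n$, which for a surface reduces to $K^M=1+2|H|^2-\tfrac12|B|^2$, combined with the normal part of the biharmonic equation. Contracting the latter with $H$ and using CMC gives $\tr A_H=2|H|^2$ and $|A_H|^2=2|H|^2-|\nabla^\perp H|^2$, controlling the second fundamental form in the mean-curvature direction; writing $\xi_1=H/|H|$ and completing to a normal frame, one finds $K^M=2|H|^2+\tfrac{|\nabla^\perp H|^2}{2|H|^2}-\tfrac12\sum_{\alpha\ge2}|A_{\xi_\alpha}|^2$.

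The main obstacle is thus to control the shape operators $A_{\xi_\alpha}$ transverse to $H$, equivalently the Hessian term $|\nabla d\phi_{t_i}|^2$ discarded in the Bochner step, since these are precisely the contributions that could force $K^M$ negative. This is where constancy of the Gaussian curvature is essential: with $K^M$ and $|H|$ constant, $|B|^2$ is constant, and I would close the estimate by combining the two Bochner identities for $\phi_{t_1},\phi_{t_2}$ with the isometric relation $\phi_{t_1}^*\tilde g+\phi_{t_2}^*\tilde g+C=g$ (whose cross-term $C$ is traceless) to eliminate the third-order terms, pushing the transverse contribution down and leaving $K^M\ge0$.
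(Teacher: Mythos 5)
Your preliminary computations are sound: the energy densities $|d\phi_{t_1}|^2=1-|H|$ and $|d\phi_{t_2}|^2=1+|H|$ are correct, the Gauss equation $K^M=1+2|H|^2-\tfrac12|B|^2$ is correct, and contracting the normal part of the biharmonic equation with $H$ does give $|A_H|^2=2|H|^2-|\nabla^\perp H|^2$ for a CMC surface. Your Weitzenb\"ock argument for the upper bound is also valid (the estimate $|d\phi_{t_i}|^4-|\phi_{t_i}^*\tilde g|^2\le\tfrac12|d\phi_{t_i}|^4$ is just Cauchy--Schwarz on a two-dimensional domain), and it is genuinely different from the paper's: the paper only proves $K^M<1$ for compact $M$, by playing the eigenvalue bound $\lambda_1\le 2(1-|H|)<2$ against Lichnerowicz's inequality $\lambda_1\ge 2K^M$, whereas your pointwise Bochner identity with constant energy density yields the stronger bound $K^M\le 1-|H|$ with no compactness assumption.

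However, the heart of the proposition, $K^M\ge 0$, is not proved in your proposal. The paper itself does not obtain it by computation: it invokes Miyata's Theorem B, whose exclusion of negative curvature rests on Bryant's theorem that there exist no minimal surfaces of constant negative Gaussian curvature in spheres --- a deep classification result, not an algebraic identity. After your (correct) reduction, what you need is exactly the estimate $\tfrac12\sum_{\alpha\ge2}|A_{\xi_\alpha}|^2\le 2|H|^2+\tfrac{|\nabla^\perp H|^2}{2|H|^2}$, i.e.\ control of the shape operators transverse to $H$, and your final paragraph offers only a declaration of intent (``combine the two Bochner identities \dots pushing the transverse contribution down''), not an argument. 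The Bochner identities cannot deliver this: each is an identity relating the unknowns $|\nabla d\phi_{t_i}|^2$ and $|\phi_{t_i}^*\tilde g|^2$, and when you sum them to compare with $|\nabla d\psi|^2=|B|^2+2$ an uncontrolled cross term $\langle\nabla d\psi_{t_1},\nabla d\psi_{t_2}\rangle$ appears; moreover, discarding $|\nabla d\phi_{t_i}|^2\ge0$ produces only \emph{upper} bounds on $K^M$, as in your second paragraph --- the signs run the wrong way for a lower bound. Nothing in the proposal uses biharmonicity beyond $|A_H|^2=2|H|^2-|\nabla^\perp H|^2$, and that relation alone is far from forcing $K^M\ge0$. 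To close the gap you must either cite Miyata's Theorem B together with Bryant's theorem, as the paper does, or reproduce that substantial argument.
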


\begin{proof}
Since $|H|\in(0,1)$, $M$ is of type $2$ and, according to a result of Miyata~\cite[Theorem B]{Miyata}, $K^M \geq 0$. Assume now that $M$ is compact. Since $\psi = i\circ \phi = \psi_{t_1} + \psi_{t_2}$, $\Delta \psi_{t_1} = 2(1-|H|)\psi_{t_1}$, we have $\lambda_1 \leq 2(1-|H|) < 2$. But, by a result of Lichnerowicz, as $\ricci^M = K^M g$, $\lambda_1 \geq 2 K^M$. Thus $K^M<1$.
\end{proof}

Using the special properties of the stress-energy tensor of biharmonic maps on CMC surfaces, we can show parallelism of the shape operator when the Gaussian curvature is non-negative.

\begin{proposition}\label{prop6} \cite{LO}
Let $\phi : \sn^{2}(r) \to \sn^n$ be a CMC proper-biharmonic immersion. Then $\sn^{2}(r)$ is pseudo-umbilical in $\sn^n$.
\end{proposition}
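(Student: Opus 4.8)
The plan is to exploit the stress-energy tensor $S_2$ of the bienergy, whose divergence satisfies $\di S_2 = -\la \tau_2(\phi), d\phi\ra$ and hence vanishes identically for a biharmonic map. First I would specialise $S_2$ to a Riemannian immersion $\phi$, where $\tau(\phi)=mH$ and $d\phi(X)=X$. Using the Weingarten formula $\nabla_X H = -A_H X + \nabla^\perp_X H$, a direct computation gives, with $m=2$,
\[
S_2(X,Y) = -\tfrac{1}{2}m^2|H|^2\la X,Y\ra + 2m\,\la A_H X, Y\ra .
\]
Since $\phi$ is CMC, $|H|$ is constant, so the first term is a constant multiple of the metric and is therefore parallel. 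The vanishing of $\di S_2$ then forces the symmetric $(1,1)$-tensor $A_H$ to be divergence-free on $\sn^{2}(r)$. Note that this is genuinely a consequence of biharmonicity: in codimension greater than one, the mere constancy of $|H|$ does not make $A_H$ divergence-free through the Codazzi equation.

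Next I would separate $A_H$ into its trace and trace-free parts. Because $\tr A_H = m|H|^2$ is constant, the trace part $|H|^2\Id$ is parallel, hence divergence-free, so the trace-free part $\mathring{A}_H := A_H - |H|^2\Id$ is itself symmetric, trace-free and divergence-free, i.e. transverse-traceless. Showing $\mathring{A}_H \equiv 0$ is exactly the pseudo-umbilicity $A_H = |H|^2\Id$.

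Finally, I would run a Hopf-type argument. On the oriented surface $\sn^{2}(r)$, a symmetric trace-free $2$-tensor corresponds, in a local isothermal coordinate $z$, to a quadratic differential $\varphi\,dz^2$, and the transverse-traceless condition $\di \mathring{A}_H =0$ is equivalent to $\partial_{\bar z}\varphi = 0$, i.e. to $\varphi\,dz^2$ being a holomorphic quadratic differential. Since $\sn^{2}(r)$ is conformally the round sphere, of genus $0$, it carries no nonzero holomorphic quadratic differential (the relevant line bundle has negative degree). Hence $\mathring{A}_H \equiv 0$ and $\phi$ is pseudo-umbilical.

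The main obstacle I anticipate is the first step: correctly expanding $S_2$ for an immersion and checking that the CMC hypothesis annihilates the divergence of every term except the one carrying $A_H$, so that biharmonicity delivers $\di A_H = 0$. Once the transverse-traceless tensor is isolated, the passage to a holomorphic quadratic differential and its forced vanishing on the sphere is the classical mechanism underlying Hopf's theorem, and requires only the genus-zero topology of $\sn^{2}(r)$.
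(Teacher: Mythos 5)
Your proof is correct and follows essentially the same route as the source the paper cites for this result: the paper gives no proof of its own but attributes the statement to \cite{LO}, explicitly describing the method there as ``the special properties of the stress-energy tensor of biharmonic maps on CMC surfaces,'' which is exactly your argument. Your computation of $S_2$ for an immersion, the observation that biharmonicity plus constancy of $|H|$ makes the bilinear form $\la A_H\cdot,\cdot\ra$ divergence-free, and the Hopf-type vanishing of the resulting holomorphic quadratic differential on the genus-zero surface $\sn^2(r)$ are all sound and reproduce the intended proof.
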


\begin{proposition}\label{prop7} \cite{LO}
Let $\phi : \rn^{2} \to \sn^n$ be a CMC proper-biharmonic immersion. Then $\nabla A_H = 0$.
\end{proposition}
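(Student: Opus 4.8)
The plan is to extract a divergence-free symmetric $2$-tensor from biharmonicity, reinterpret it on the surface as a holomorphic quadratic differential, and then force it to be parallel by combining an a~priori bound with the flatness of $\rn^2$. First I would introduce the stress-energy tensor $S_2$ of the bienergy, which for every smooth map satisfies $\di S_2 = -\langle \tau_2(\phi), d\phi\rangle$ and is therefore divergence-free whenever $\phi$ is biharmonic. For a Riemannian immersion $\tau(\phi)=mH$, and writing $\nabla_X\tau(\phi)=m(-A_H X+\nabla^\perp_X H)$ via the Weingarten formula, a direct computation gives, for a surface ($m=2$),
$$ S_2 = 4\,A_H - 2|H|^2\, g $$
as symmetric $2$-tensors, where $A_H$ is regarded as a $(0,2)$-tensor through $g$. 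Since $\phi$ is CMC, the term $2|H|^2 g$ is parallel, so $0=\di S_2 = 4\,\di A_H$, whence $\di A_H=0$; note that this step uses neither flatness nor the dimension of the domain.

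Next I would pass to the conformal structure of the surface. As $\tr A_H = 2|H|^2$ is constant, the trace-free part $\Phi:=A_H-|H|^2\,\Id$ is a symmetric, trace-free and divergence-free $2$-tensor. On the oriented Riemann surface determined by $(M^2,g)$ such a tensor is classically the real part of a holomorphic quadratic differential $Q$: being ``trace-free'' and ``divergence-free'' are precisely the Cauchy--Riemann conditions for the $(2,0)$-part of $\Phi$. Because $M^2=\rn^2$ is the complete, simply-connected flat surface, it is isometric to Euclidean space and hence conformally $\cn$, so $Q=f\,dz^2$ with $f$ entire, and $|\Phi|$ is a constant multiple of $|f|$.

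Finally I would produce the a~priori bound that closes the argument. Pairing the normal part of the biharmonic equation, $\Delta^\perp H+\tr B(\cdot,A_H\cdot)-2H=0$, with $H$, and using $\langle \tr B(\cdot,A_H\cdot),H\rangle=\tr A_H^2=|A_H|^2$, yields $|A_H|^2 = 2|H|^2-\langle\Delta^\perp H,H\rangle$. Since $|H|$ is constant, the Weitzenb\"ock identity $\tfrac12\Delta|H|^2=\langle\Delta^\perp H,H\rangle-|\nabla^\perp H|^2$ gives $\langle\Delta^\perp H,H\rangle=|\nabla^\perp H|^2\ge 0$, hence $|A_H|^2\le 2|H|^2$. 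Thus $|\Phi|^2=|A_H|^2-2|H|^4$ is bounded, so $f$ is a bounded entire function and therefore constant by Liouville; consequently $Q$, and with it $\Phi$, is parallel, so that $\nabla A_H=\nabla\Phi=0$.

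The main obstacle is exactly this final passage from a divergence-free (equivalently, holomorphic) object to a parallel one on a non-compact domain: lacking compactness, there is no integration by parts to exploit, and one must instead combine the flatness of $\rn^2$ (to realise $M$ as $\cn$ and $Q$ as an entire differential) with the geometric bound $|A_H|^2\le 2|H|^2$ coming from the normal biharmonic equation, so as to invoke Liouville. A secondary point demanding care is the bookkeeping of signs and normalisation constants in $S_2$ and in the splitting of the biharmonic equation into its tangential and normal parts.
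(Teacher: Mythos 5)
Your proposal is correct and is essentially the argument the paper relies on: Proposition~\ref{prop7} is quoted from~\cite{LO}, and the paper explicitly attributes it to ``the special properties of the stress-energy tensor of biharmonic maps on CMC surfaces'', which is exactly what you reconstruct --- the divergence-free tensor $S_2 = 4A_H - 2|H|^2 g$, whose trace-free part is the real part of a holomorphic quadratic differential on $\rn^2 \cong \cn$, forced to be constant by Liouville's theorem via the bound $|A_H|^2 \le 2|H|^2$ extracted from the normal part of the biharmonic equation. All the individual steps (the formula for $S_2$, the identity $\langle \Delta^\perp H, H\rangle = |\nabla^\perp H|^2$ for CMC, and the passage from constancy of the quadratic differential to $\nabla A_H = 0$ on the flat plane) check out with consistent sign conventions.
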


One effect of Miyata's work on surfaces of positive Gaussian curvature is a description in terms of diagonal sums of Boruvka spheres.

\begin{proposition}\label{prop8}
A Riemannian immersion $\phi$ is a CMC proper-biharmonic map from a surface with positive Gaussian curvature in $\sn^{n}$ if and only if it is (the restriction to an open subset of) the diagonal sum  $\phi= (\alpha \phi_1 , \beta \phi_2)$, where 
\begin{align*}
\phi_1 &: \sn^{2}(r) \to \sn^{2n_1}(r_1) \, , \, \phi_2 : \sn^{2}(r) \to \sn^{2n_2}(r_2)
\end{align*}
are Boruvka minimal immersions with parameters
\begin{align*}
\alpha^2 &=  \frac{q_1}{q_1 + q_2} \quad \mbox{and} \quad \beta^2 = \frac{q_2}{q_1 + q_2} \\
r_1 &= \sqrt{\frac{q_1 + q_2}{2q_1}} \quad \mbox{and} \quad r_2 = \sqrt{\frac{q_1 + q_2}{2q_2}} \\
r &= \frac{1}{2} \sqrt{q_1 + q_2} ,
\end{align*}
with $q_1 =n_1 (n_1 +1)$ and $q_2 =n_2 (n_2 +1)$, and $n_1\neq n_2$. Moreover,
$|H|^2=\frac{(q_1 - q_2)^2}{(q_1 + q_2)^2}$ and $\phi$ is pseudo-umbilical.
\end{proposition}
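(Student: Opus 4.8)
The plan is to prove both implications, the converse being a direct verification against Proposition~\ref{prop2} and the forward direction combining the $2$-type decomposition with Miyata's classification of constant-curvature $2$-type surfaces. For the converse, suppose $\phi=(\alpha\phi_1,\beta\phi_2)$ with the stated parameters. I would simply check that these values are precisely the conditions of Proposition~\ref{prop2}: from $\alpha^2=q_1/(q_1+q_2)$ and $r_1^2=(q_1+q_2)/(2q_1)$ one reads off $\alpha^2=1/(2r_1^2)$, and likewise $\beta^2=1/(2r_2^2)$, while $1/r_1^2+1/r_2^2=2q_1/(q_1+q_2)+2q_2/(q_1+q_2)=2$, with $r_1\neq r_2$ since $n_1\neq n_2$ forces $q_1\neq q_2$. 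Hence $\phi$ is proper-biharmonic with $|H|^2=1-1/(r_1^2r_2^2)=(q_1-q_2)^2/(q_1+q_2)^2$, its domain $\sn^2(r)$ has positive Gaussian curvature $1/r^2>0$, and pseudo-umbilicity is the computation recorded just before Proposition~\ref{prop2} (or follows from Proposition~\ref{prop6}). I would also confirm compatibility of the common domain radius: Takahashi's minimality relation $\lambda=2/r_i^2$ for each Boruvka factor yields the single value $r=\tfrac12\sqrt{q_1+q_2}$.

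For the forward direction, let $\phi:M^2\to\sn^n$ be CMC proper-biharmonic with constant Gaussian curvature $K^M>0$. By Proposition~\ref{prop1}, $|H|\in(0,1]$; the value $|H|=1$ corresponds to a single ($1$-type) minimal Boruvka immersion into a small hypersphere $\sn^{n-1}(1/\sqrt2)$, which is excluded from the present genuinely $2$-type description, so we take $|H|\in(0,1)$. Then $\psi=i\circ\phi$ is of $2$-type by \cite{BMO1,BMO2}: $\psi=\psi_{t_1}+\psi_{t_2}$ with $\Delta\psi_{t_i}=\lambda_{t_i}\psi_{t_i}$, where $\lambda_{t_1}=2(1-|H|)$, $\lambda_{t_2}=2(1+|H|)$, each $\psi_{t_i}$ harmonic into $\sn^n(1/\sqrt2)$ with $|\psi_{t_i}|=1/\sqrt2$ and $\langle\psi_{t_1},\psi_{t_2}\rangle=0$. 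Since $K^M>0$ is constant, $M$ is, up to the stated restriction to an open subset, a round sphere $\sn^2(r)$ with $1/r^2=K^M$; its Laplace spectrum is quantized as $\{k(k+1)/r^2\}$, so the two eigenvalues must take the form $\lambda_{t_i}=q_i/r^2$ with $q_i=n_i(n_i+1)$ for positive integers $n_1\neq n_2$ (distinct since $\lambda_{t_1}\neq\lambda_{t_2}$), and the components of $\psi_{t_i}$ are degree-$n_i$ spherical harmonics.

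It then remains to identify each component and solve for the parameters. Being an $\rn^{n+1}$-valued eigenmap of eigenvalue $\lambda_{t_i}$ with constant norm $1/\sqrt2$ (hence constant energy density), $\sqrt2\,\psi_{t_i}$ is a $\lambda_{t_i}$-eigenmap of $\sn^2(r)$ into the unit sphere built from degree-$n_i$ harmonics; invoking Miyata's classification \cite{Miyata} I would conclude $\psi_{t_i}=\alpha_i\phi_i$ for a Boruvka minimal immersion $\phi_i:\sn^2(r)\to\sn^{2n_i}(r_i)$ into a $(2n_i)$-dimensional subsphere, the two target subspaces $\rn^{2n_1+1}$ and $\rn^{2n_2+1}$ being orthogonal because the harmonic eigenspaces are. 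The remaining bookkeeping is pure algebra: Takahashi's relation $\lambda_{t_i}=2/r_i^2=q_i/r^2$ gives $r_i^2=2r^2/q_i$; summing $\lambda_{t_1}+\lambda_{t_2}=4$ gives $r^2=(q_1+q_2)/4$, whence $r_i^2=(q_1+q_2)/(2q_i)$; the norm condition $\alpha_i r_i=1/\sqrt2$ yields $\alpha^2=q_1/(q_1+q_2)$, $\beta^2=q_2/(q_1+q_2)$; and $\lambda_{t_1}=2(1-|H|)$ gives $|H|^2=(q_1-q_2)^2/(q_1+q_2)^2$, with pseudo-umbilicity inherited from the diagonal-sum form (Proposition~\ref{prop6}).

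The main obstacle is precisely the identification step: one must rule out the non-standard $\lambda_{t_i}$-eigenmaps permitted by the Do~Carmo--Wallach moduli and show that $\psi_{t_i}$ is genuinely the standard (Boruvka) minimal immersion into its subsphere. This rigidity is the heart of Miyata's theorem for $2$-type constant-curvature surfaces, so the real work is to verify that the biharmonicity hypotheses supply exactly the mass-symmetry and orthogonality of components needed to apply it, and then to carry the resulting quantities through the elementary algebra above.
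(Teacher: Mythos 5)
Your proposal is correct and follows essentially the same route as the paper: the forward direction rests, exactly as in the paper's proof, on Miyata's Theorem A (mass-symmetric $2$-type surfaces of positive constant curvature in spheres are diagonal sums of two distinct standard minimal immersions of $2$-spheres), with the mass-symmetric $2$-type structure supplied by the CMC proper-biharmonic decomposition, and the parameter identities come from the biharmonicity criterion for diagonal sums (Proposition~\ref{prop2}), which you rederive equivalently via Takahashi's relation and the eigenvalue sum $\lambda_{t_1}+\lambda_{t_2}=4$. The only difference is expository: the paper cites Proposition~\ref{prop2} wholesale for both directions, whereas you verify the converse against it and carry out the forward algebra by hand.
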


\begin{proof}
By \cite[Theorem A]{Miyata}, the only mass-symmetric $2$-type immersions of positive Gaussian curvature in $\sn^{n}$ are diagonal sums of two different standard minimal immersions of two-spheres.

Let $\phi_1 : \sn^{2}(r) \to \sn^{2n_1}(r_1)$, $r_1 = r \sqrt{\tfrac{2}{n_1 (n_1 +1)}}$, $n_1 \geq 2$, and $\phi_2 : \sn^{2}(r) \to \sn^{2n_2}(r_2)$, 
$r_2 = r \sqrt{\tfrac{2}{n_2 (n_2 +1)}}$, $n_2 \geq 2$, be two Boruvka minimal immersions and consider the diagonal map
$$\phi = (\alpha \phi_1, \beta \phi_2) : \sn^{2}(r) \to \sn^{2n_1+ 2n_2 +1},$$
under the conditions
\begin{equation}
\begin{cases}
\alpha^2 + \beta^2 = 1 ,\\
\alpha^2 r_1^2+ \beta^2 r_2^2 = 1 ,
\end{cases}
\end{equation}
so that $\phi$ is a Riemannian immersion. The proof then follows from Proposition~\ref{prop2}.
\end{proof}

\begin{remark}[case of $K^M<0$]
There can be no CMC proper-biharmonic surface in a sphere with $K^M<0$, by Bryant, cf.~\cite[Theorem B]{Miyata} and \cite{Bryant}.
\end{remark}

When the surface is actually flat, Miyata's formula provides explicit description, once extended to $\rn^2$, in terms of four parameters.
 
\begin{theorem}\label{thmMiyata}\cite{Miyata}
Let $D$ be a small disk about the origin in the Euclidean plane $\rn^2$ and $\phi : D \to \sn^{n}$ be a CMC proper-biharmonic immersion with $|H|\in (0,1)$. Then
\begin{enumerate}
\item[i)] $n$ is odd, $n\geq 5$.
\item[ii)] $\phi$ extends uniquely into a CMC proper-biharmonic immersion of $\rn^2$ into $\sn^{n}$.
\item[iii)] $\psi = i \circ \phi : \rn^2 \to \rn^{n+1}$ can be written
\begin{align}
\psi (z) &= \tfrac{1}{\sqrt{2}} \sum_{k=1}^{m} \sqrt{R_k}\Big(  e^{\tfrac{\sqrt{\lambda_1}}{2}(\mu_k z - \bar{\mu}_k\bar{z})}Z_k
+ e^{\tfrac{\sqrt{\lambda_1}}{2}(-\mu_k z + \bar{\mu}_k\bar{z})}\bar{Z}_k\Big) \notag\\
&+ \tfrac{1}{\sqrt{2}} \sum_{j=1}^{m'} \sqrt{R'_j}\Big( e^{\tfrac{\sqrt{\lambda_2}}{2}(\eta_j z - \bar{\eta}_j \bar{z})}W_j
+ e^{\tfrac{\sqrt{\lambda_2}}{2}(-\eta_j z + \bar{\eta}_j \bar{z})}\overline{W}_j\Big) , \tag{*}\label{formula*}
\end{align}
where 
\begin{enumerate}
\item $Z_k = \tfrac{1}{2} \Big( E_{2k-1} - \i E_{2k}\Big),\, k=1,\dots,m, \i^2=-1$,
\item $W_j = \tfrac{1}{2} \Big( E_{2(m+j)-1} - \i E_{2(m+j)}\Big) ,\, j=1,\dots,m',$
\item $\{E_1,\dots,E_{2m+2m'}\}$ is an orthonormal basis of $\rn^{n+1}$, $n=2m+2m'-1$,
\item $\lambda_1 = 2(1-|H|)$, $\lambda_2 = 2(1+|H|)$, $|H|$ constant, $|H|\in (0,1)$ 
\item $\sum_k R_k =1$, $\sum_j R'_j =1$, $R_k >0$, $R'_j >0$,
\item $(1-|H|)\sum_k \mu^2_k R_k + (1+|H|)\sum_j \eta^2_jR'_j = 0$.
\item $\{ \pm \mu_k\}_{k=1}^{m}$ are $2m$ distinct complex numbers of norm $1$, 
\item $\{ \pm \eta_j\}_{j=1}^{m'}$ are $2m'$ distinct complex numbers of norm $1$.
\end{enumerate}
\end{enumerate}
\end{theorem}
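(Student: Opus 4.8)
The plan is to reduce the problem to solving the eigenvalue equation of the flat Laplacian and then to read off the geometry from the resulting plane-wave expansions. Since $\phi$ is CMC proper-biharmonic with $|H|\in(0,1)$ and $m=2$, the type decomposition theorem gives $\psi=\psi_{t_1}+\psi_{t_2}$ with $\Delta\psi_{t_1}=2(1-|H|)\psi_{t_1}$, $\Delta\psi_{t_2}=2(1+|H|)\psi_{t_2}$, together with $\la\psi_{t_1},\psi_{t_2}\ra=0$ and $|\psi_{t_1}|=|\psi_{t_2}|=\tfrac{1}{\sqrt2}$ at every point of $D$. As $D$ carries the flat metric, $\Delta$ is the ordinary (positive) Euclidean Laplacian, so the $\rn^{n+1}$-valued maps $\psi_{t_1},\psi_{t_2}$ have components that are eigenfunctions of $-(\partial_x^2+\partial_y^2)$ with eigenvalues $\lambda_1=2(1-|H|)$ and $\lambda_2=2(1+|H|)$.

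First I would solve these equations. Every entire eigenfunction of eigenvalue $\lambda$ is a superposition of plane waves $e^{\tfrac{\sqrt{\lambda}}{2}(\mu z-\bar\mu\bar z)}=e^{\i\sqrt{\lambda}\,\mathrm{Im}(\mu z)}$ with $|\mu|=1$, and reality of $\psi_{t_i}$ forces the frequencies to occur in antipodal pairs $\pm\mu$ with conjugate vector coefficients; this is exactly the pairing $(Z_k,\bar Z_k)$, $(W_j,\overline{W}_j)$ appearing in \eqref{formula*}. Because plane waves are entire on all of $\rn^2$, the representation instantly yields the extension of $\psi$ from $D$ to $\rn^2$ demanded in (ii); that this extension is still a CMC proper-biharmonic Riemannian immersion is guaranteed by the fact that all its defining properties are encoded in the global algebraic relations below, and its uniqueness is the unique continuation theorem for CMC proper-biharmonic immersions proved earlier.

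Next I would extract the algebraic constraints by expanding the pointwise identities into Fourier modes on $\rn^2$ and requiring every non-constant mode to vanish. Constancy of $|\psi_{t_1}|^2\equiv\tfrac12$ (and likewise for $\psi_{t_2}$), combined with the distinctness of the frequencies $\{\pm\mu_k\}$, forces $\la Z_k,Z_l\ra=0$ and $\la Z_k,\bar Z_l\ra=\tfrac12\delta_{kl}$; these relations are precisely equivalent to $\{E_1,\dots,E_{2m+2m'}\}$ being orthonormal with $Z_k=\tfrac12(E_{2k-1}-\i E_{2k})$, $W_j=\tfrac12(E_{2(m+j)-1}-\i E_{2(m+j)})$, and their zero-frequency part gives $\sum_kR_k=\sum_jR'_j=1$. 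The relation $\la\psi_{t_1},\psi_{t_2}\ra\equiv0$, together with fullness, makes the $\mu$- and $\eta$-blocks mutually orthogonal, so $\{E_\ell\}$ is a genuine orthonormal basis of $\rn^{n+1}$ and $n+1=2m+2m'$. Finally, the isometry of $\psi$ amounts to $\la\psi_z,\psi_z\ra\equiv0$ and $\la\psi_z,\psi_{\bar z}\ra\equiv\tfrac12$; differentiating \eqref{formula*} twice in $z$ and isolating the constant mode turns the first into $(1-|H|)\sum_k\mu_k^2R_k+(1+|H|)\sum_j\eta_j^2R'_j=0$, while the second reduces, via $\sum_kR_k=\sum_jR'_j=1$, to the automatically satisfied $\lambda_1+\lambda_2=4$.

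It remains to settle parity and dimension. From $n+1=2m+2m'$ the integer $n$ is odd, and genuine $2$-type forces both sums to be non-empty, $m,m'\geq1$. The borderline case $m=m'=1$ is impossible: there $R_1=R'_1=1$ and the scalar constraint becomes $(1-|H|)\mu_1^2=-(1+|H|)\eta_1^2$, whence taking moduli with $|\mu_1|=|\eta_1|=1$ gives $1-|H|=1+|H|$, i.e.\ $|H|=0$, contradicting proper-biharmonicity. Hence $m+m'\geq3$ and $n\geq5$, which is (i). I expect the principal obstacle to be the Fourier bookkeeping of the third paragraph — verifying that constancy of the two norms together with the conformality relations produces exactly the orthonormality of the $E_\ell$ and the single effective constraint, with no further hidden relation; this matching of degrees of freedom is where Miyata's analysis does its real work.
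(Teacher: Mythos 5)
First, a point of reference: the paper does not prove this statement at all --- it is quoted from Miyata \cite{Miyata} --- so there is no internal proof to compare against, and your proposal must stand on its own as a reconstruction of Miyata's theorem. It does not, because of a genuine gap at its very first step. You assert that every eigenfunction of the flat Laplacian with eigenvalue $\lambda$ is a ``superposition'' of plane waves $e^{\i\sqrt{\lambda}\,\mathrm{Im}(\mu z)}$, $|\mu|=1$, and from then on you treat $\psi_{t_1},\psi_{t_2}$ as \emph{finite} sums of such waves. That finiteness is precisely the content of the theorem, not a known property of eigenfunctions. The solution space of $(\partial_x^2+\partial_y^2)u=-\lambda u$ on a disk is infinite dimensional: it contains the Bessel-type solutions $J_\nu(\sqrt{\lambda}\,r)e^{\i\nu\theta}$, which are continuous superpositions (integrals over the circle of frequency directions) of plane waves but not finite sums, and it contains solutions such as $Y_0(\sqrt{\lambda}\,|z-z_0|)$ with $z_0$ outside the closed disk, which do not extend to eigenfunctions on all of $\rn^2$. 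Hence neither the expansion (iii) nor the extension statement (ii) follows from the eigenvalue equations $\Delta\psi_{t_i}=\lambda_i\psi_{t_i}$ alone, and your claim that the representation ``instantly yields the extension'' is circular: it presupposes the finite plane-wave form on $D$. To force that form one must exploit the full isometric-immersion data (constancy of $|\psi_{t_i}|$, $\la\psi_{t_1},\psi_{t_2}\ra=0$, conformality, the Gauss equation for the flat metric) through the moving-frame/ODE analysis that constitutes the bulk of Miyata's work; nothing in your sketch substitutes for it.

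The purely algebraic part of your proposal (third and fourth paragraphs) is essentially correct \emph{once} formula \eqref{formula*} with finitely many distinct unit frequencies is granted: the zero-mode of $\la\psi_z,\psi_z\ra=0$ does give condition (f), the zero-mode of $\la\psi_z,\psi_{\bar z}\ra=\tfrac12$ is the identity $\lambda_1+\lambda_2=4$, and your exclusion of $m=m'=1$ by taking moduli in $(1-|H|)\mu_1^2=-(1+|H|)\eta_1^2$ correctly yields $n\geq 5$. Two caveats remain even there. Constancy of $|\psi_{t_1}|^2$ forces $\la Z_k,Z_l\ra=0$ and $\la Z_k,\bar Z_l\ra=\tfrac12\delta_{kl}$ mode by mode only after one checks that distinct index pairs cannot produce the same combination frequency $\mu_k\pm\mu_l$; this is true, but it requires the elementary observation that equal chords of the unit circle force antipodal coincidences, which are excluded by (g). And the cross-block relations coming from $\la\psi_{t_1},\psi_{t_2}\ra\equiv 0$ are genuinely more delicate, since mixed frequencies $\sqrt{\lambda_1}\,\mu_k\pm\sqrt{\lambda_2}\,\eta_j$ from different pairs can coincide. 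So even the ``Fourier bookkeeping'' you defer is not routine --- but the gap you must close first is the finiteness of the plane-wave decomposition, which your argument assumes rather than proves.
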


\begin{remark}[symmetries of solutions]
Let $h=|H|$ and $(h,R_k,R'_j,\mu_k,\eta_j)$ be a solution of (e), (f), (g) and (h) then
\begin{itemize}
\item $(h,R_k,R'_j,\pm\mu_k,\pm\eta_j)$ is also a solution of Conditions~(e), (f), (g) and (h) ($2^{m+m'}$ solutions).
\item $(h,R_k,R'_j,\bar{\mu}_k,\bar{\eta}_j)$ is also a solution of Condition~(e), (f), (g) and (h) (1 solution). Everything is up to an isometry of $\sn^n$ and of $\rn^2$.
\item $(h,R_k,R'_j,\alpha \mu_k,\alpha\eta_j),\alpha\in\cn, |\alpha|=1$ is also a solution of Condition~(e), (f), (g) and (h).
\end{itemize}
\end{remark}

While a restricted case, pseudo-umbilical maps on $\rn^2$ are, as for positive Gaussian curvature, completely described by diagonal sums of minimal immersions.
\begin{proposition}
Let $\phi :\rn^2 \to \sn^n$ be a CMC proper-biharmonic immersion with mean curvature $h\in (0,1)$. 
Then it is pseudo-umbilical if and only if $\psi=i\circ\phi$ is a diagonal map and $\psi=\psi_{t_1} + \psi_{t_2}=(\psi_{t_1},\psi_{t_2})$, where 
$\phi_{t_1}:\rn^2\to \sn^{2m-1}\left(\tfrac{1}{\sqrt{2}}\right)$, $\phi_{t_2}:\rn^2\to \sn^{2m'-1}\left(\tfrac{1}{\sqrt{2}}\right)$ are harmonic maps, 
$2(m+m')= n+1$, and $\phi_{t_1}^* \la , \ra = \tfrac{1-h}{2}\la ,\ra$, $\phi_{t_2}^* \la , \ra = \tfrac{1+h}{2}\la ,\ra$.
\end{proposition}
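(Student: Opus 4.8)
The plan is to reduce the whole statement to one transparent identity relating the shape operator $A_H$ of $\phi$ in $\sn^n$ to the two symmetric $2$-tensors $g_i=\phi_{t_i}^*\la\,,\ra$, $i=1,2$. First I would observe that the ``diagonal'', ``harmonic'' and dimension clauses of the equivalence are in fact automatic for \emph{every} flat CMC proper-biharmonic immersion: combining the BMO decomposition with Theorem~\ref{thmMiyata}, Miyata's formula \eqref{formula*} exhibits $\psi_{t_1}$ with values in the fixed subspace $V_1=\operatorname{span}\{E_1,\dots,E_{2m}\}$ and $\psi_{t_2}$ with values in $V_2=\operatorname{span}\{E_{2m+1},\dots,E_{2m+2m'}\}$, with $V_1\perp V_2$ and $2(m+m')=n+1$, the maps $\phi_{t_1},\phi_{t_2}$ being harmonic into $\sn^{2m-1}(\tfrac1{\sqrt2})$ and $\sn^{2m'-1}(\tfrac1{\sqrt2})$. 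Hence $\psi=i\circ\phi$ is already a diagonal map, and the genuine content of the equivalence is the homothety $g_1=\tfrac{1-h}{2}g$, $g_2=\tfrac{1+h}{2}g$.

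Next I would carry out the key computation. Since $M=\rn^2$ is flat, work in global coordinates in which the induced connection vanishes, so that the second fundamental form $B$ of $\phi$ in $\sn^n$ reads $B(X,Y)=\partial_X\partial_Y\psi+g(X,Y)\psi$. Pairing with $H$, which is tangent to $\sn^n$ (so $\la H,\psi\ra=0$) and normal to $M$, gives $\la A_H X,Y\ra=\la\partial_X\partial_Y\psi,H\ra$. Substituting $H=h(\psi_{t_1}-\psi_{t_2})$ and using $V_1\perp V_2$ to annihilate all cross terms, together with the elementary identity $\la\partial_X\partial_Y\psi_{t_i},\psi_{t_i}\ra=-g_i(X,Y)$ (obtained by differentiating $|\psi_{t_i}|^2=\tfrac12$ twice), I obtain the clean formula $\la A_H X,Y\ra=h\,(g_2-g_1)(X,Y)$. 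The diagonal structure also yields $g_1+g_2=g$.

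From here both implications drop out. Pseudo-umbilicity is $A_H=h^2 I$, i.e. $h(g_2-g_1)=h^2 g$; since $h\neq0$ this is equivalent to $g_2-g_1=hg$, which combined with $g_1+g_2=g$ is exactly $g_1=\tfrac{1-h}{2}g$ and $g_2=\tfrac{1+h}{2}g$. Conversely, these homothety relations give $g_2-g_1=hg$, hence $A_H=h^2 I$. As a consistency check, taking $g$-traces and using $\tr_g g_i=|d\psi_{t_i}|^2=\lambda_i/2=1\mp h$ reproduces the factors $\tfrac{1\mp h}{2}$.

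The main obstacle is the middle step: putting $A_H$ into the readable form $h(g_2-g_1)$ requires the \emph{genuine} orthogonal-subspace (diagonal) structure, not merely the pointwise orthogonality $\la\psi_{t_1},\psi_{t_2}\ra=0$ furnished by the BMO theorem. Without fixed orthogonal $V_1,V_2$ there survive cross terms $\la\partial_X\partial_Y\psi_{t_1},\psi_{t_2}\ra$ and a cross contribution to $g$, and the identity fails; this is precisely why Theorem~\ref{thmMiyata} is invoked at the outset. The only other points needing care are the sign conventions for $\Delta$, for the second fundamental forms in $\sn^n$ versus $\rn^{n+1}$, and the dual role of $H$ as $\sn^n$-tangent but $M$-normal.
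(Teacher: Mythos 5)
Your proposal is correct, and while it rests on the same underlying strategy as the paper's proof --- pairing the second fundamental form with $H=h(\psi_{t_1}-\psi_{t_2})$ and exploiting the orthogonal-subspace structure coming from Miyata's Formula~\eqref{formula*} --- its execution is genuinely different and in one respect tighter. The paper substitutes the explicit exponentials of \eqref{formula*} into $\la (\nabla d\phi)(\partial x,\partial x),H\ra$, $\la (\nabla d\phi)(\partial y,\partial y),H\ra$ and $\la (\nabla d\phi)(\partial x,\partial y),H\ra$, reduces pseudo-umbilicity to the parameter equations \eqref{eqq1} and \eqref{eqq2}, combines them with condition (f) of Theorem~\ref{thmMiyata} to get $\sum_k R_k\mu_k^2=\sum_j R'_j\eta_j^2=0$, and then only \emph{asserts} (``it is not difficult to check'') that these algebraic conditions are equivalent to the harmonicity and homothety statements. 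Your invariant identity $\la A_H X,Y\ra = h\,(g_2-g_1)(X,Y)$ is the coordinate-free packaging of exactly that computation, and your relation $g_1+g_2=g$ plays precisely the role that condition (f) plays in the paper: writing $g_1+g_2=g$ in Miyata's parameters, the trace part is the normalisation $\tfrac{\lambda_1}{2}+\tfrac{\lambda_2}{2}=2$ and the trace-free and off-diagonal parts are the real and imaginary parts of (f). What your route buys: the conclusion $g_1=\tfrac{1-h}{2}g$, $g_2=\tfrac{1+h}{2}g$ drops out at once, with no residual verification left to the reader, and your preliminary observation --- that diagonality, the harmonicity of $\phi_{t_1},\phi_{t_2}$ (immediate since $\Delta\psi_{t_i}=\lambda_i\psi_{t_i}$ is radial, hence has vanishing tangential part), and the dimension count hold for \emph{every} flat CMC proper-biharmonic immersion --- correctly isolates the homothety factors as the sole nontrivial content of the equivalence. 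What the paper's route buys: the explicit conditions $\sum_k R_k\mu_k^2=0=\sum_j R'_j\eta_j^2$ on the Miyata data, which is the form actually used right afterwards in the corollary classifying pseudo-umbilical immersions into $\sn^7$. Your closing caveat is also well taken: the identity for $A_H$ genuinely needs the fixed orthogonal subspaces $V_1\perp V_2$ furnished by Miyata's formula, not merely the pointwise orthogonality $\la\psi_{t_1},\psi_{t_2}\ra=0$ from the type decomposition, since otherwise the cross terms $\la \partial_X\partial_Y\psi_{t_1},\psi_{t_2}\ra$ survive.
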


\begin{proof}
By direct computation, we obtain:
\begin{align*}
&(\nabla d\phi)(\partial x,\partial x)= \frac{\partial^2 \psi}{\partial x^2} + \psi\\
&=\tfrac{1}{\sqrt{2}} \sum_{k}  \sqrt{R_k} \Big(e^{\tfrac{\sqrt{\lambda_1}}{2}(\mu_k z - \bar{\mu}_k\bar{z})}
 \Big(\tfrac{\lambda_1}{4}(\mu_k -\bar{\mu}_k )^2 +1\Big) Z_k + e^{\tfrac{\sqrt{\lambda_1}}{2}(-\mu_k z + \bar{\mu}_k\bar{z})}\Big(\tfrac{\lambda_1}{4}(-\mu_k + \bar{\mu}_k )^2 +1\Big)\bar{Z}_k\Big) \\
&+ \tfrac{1}{\sqrt{2}} \sum_{j}  \sqrt{R'_j} \Big(e^{\tfrac{\sqrt{\lambda_2}}{2}(\eta_j z - \bar{\eta}_j \bar{z})}
\Big(\tfrac{\lambda_2}{4}(\eta_j - \bar{\eta}_j )^2 +1\Big) W_j + e^{\tfrac{\sqrt{\lambda_2}}{2}(-\eta_j z + \bar{\eta}_j \bar{z})} \Big(\tfrac{\lambda_2}{4}(-\eta_j + \bar{\eta}_j)^2 +1\Big) \overline{W}_j\Big) ,
\end{align*}
and
\begin{align*}
&(\nabla d\phi)(\partial y,\partial y)=\frac{\partial^2 \psi}{\partial y^2} +\psi  \\
&=\tfrac{1}{\sqrt{2}} \sum_{k} \sqrt{R_k} \Big(  e^{\tfrac{\sqrt{\lambda_1}}{2}(\mu_k z - \bar{\mu}_k\bar{z})}
 \Big(\tfrac{-\lambda_1}{4}(\mu_k +\bar{\mu}_k )^2+1\Big) Z_k + e^{\tfrac{\sqrt{\lambda_1}}{2}(-\mu_k z + \bar{\mu}_k\bar{z})}
\Big(\tfrac{-\lambda_1}{4}(\mu_k + \bar{\mu}_k )^2+1\Big) \bar{Z}_k\Big) \\
&+ \tfrac{1}{\sqrt{2}} \sum_{j} \sqrt{R'_j}\Big( e^{\tfrac{\sqrt{\lambda_2}}{2}(\eta_j z - \bar{\eta}_j \bar{z})}
\Big(\tfrac{-\lambda_2}{4}(\eta_j + \bar{\eta}_j )^2 +1\Big) W_j 
+ e^{\tfrac{\sqrt{\lambda_2}}{2}(-\eta_j z + \bar{\eta}_j \bar{z})} \Big(\tfrac{-\lambda_2}{4}(\eta_j + \bar{\eta}_j)^2+1\Big) \overline{W}_j\Big) .
\end{align*}
So
\begin{align*}
2H &= 
\tfrac{1}{\sqrt{2}} \sum_{k}  \sqrt{R_k}\Big( e^{\tfrac{\sqrt{\lambda_1}}{2}(\mu_k z - \bar{\mu}_k\bar{z})}
 (2-\lambda_1) Z_k +  e^{\tfrac{\sqrt{\lambda_1}}{2}(-\mu_k z + \bar{\mu}_k\bar{z})}(2-\lambda_1)\bar{Z}_k\Big) \\
&+ \tfrac{1}{\sqrt{2}} \sum_{j} \sqrt{R'_j} \Big( e^{\tfrac{\sqrt{\lambda_2}}{2}(\eta_j z - \bar{\eta}_j \bar{z})}
(2-\lambda_2) W_j 
+  e^{\tfrac{\sqrt{\lambda_2}}{2}(-\eta_j z + \bar{\eta}_j \bar{z})} (2-\lambda_2) \overline{W}_j\Big)\\
&= (2-\lambda_1) \psi_1 + (2-\lambda_2)\psi_2\\
&= 2|H| \psi_1 - 2|H| \psi_2 .
\end{align*}
To find when $\phi : \rn^2 \to \sn^n$ is pseudo-umbilical we need only consider the direction of the mean curvature.
\begin{align*}
& \la (\nabla d\phi)(\partial x,\partial x) , H\ra  = \frac{|H|}{2} \sum_{k} R_k\Big(\tfrac{\lambda_1}{4}(\mu_k -\bar{\mu}_k )^2 +1\Big)
- \frac{|H|}{2}\sum_{j}  R'_j\left(\tfrac{\lambda_2}{4}(\eta_j -\bar{\eta}_j )^2 +1\right)\\
& = \frac{|H|}{8} \Big(\lambda_1\sum_{k} R_k(\mu_k -\bar{\mu}_k )^2 -\lambda_2\sum_{j}  R'_j(\eta_j -\bar{\eta}_j )^2 \Big) ,
\end{align*}
and 
\begin{align*}
& \la (\nabla d\phi)(\partial y,\partial y) , H\ra =  \frac{|H|}{2} \sum_{k} R_k\Big(-\tfrac{\lambda_1}{4}(\mu_k +\bar{\mu}_k )^2 +1\Big)
- \frac{|H|}{2}\sum_{j}  R'_j\Big(-\tfrac{\lambda_2}{4}(\eta_j +\bar{\eta}_j )^2 +1\Big)\\
& = \frac{|H|}{8} \Big(-\lambda_1\sum_{k} R_k(\mu_k +\bar{\mu}_k )^2 + \lambda_2\sum_{j}  R'_j(\eta_j +\bar{\eta}_j )^2 \Big) .
\end{align*}
The requirement for pseudo-umbilic is:
\begin{align*}
&\lambda_1\sum_{k} R_k(\mu_k -\bar{\mu}_k )^2 -\lambda_2\sum_{j}  R'_j(\eta_j -\bar{\eta}_j )^2 =
-\lambda_1\sum_{k} R_k(\mu_k +\bar{\mu}_k )^2 + \lambda_2\sum_{j}  R'_j(\eta_j +\bar{\eta}_j )^2 ,
\end{align*}
or equivalently
\begin{align}\label{eqq1}
& \lambda_1\sum_{k} R_k(\mu^2_k +\bar{\mu}^2_k ) -\lambda_2\sum_{j}  R'_j(\eta^2_j +\bar{\eta}^2_j ) = 0 .
\end{align}
The last term is
$$(\nabla d\phi)(\partial x,\partial y) = \frac{\partial^2 \psi}{\partial x \partial y} ,$$
so
\begin{align*}
&(\nabla d\phi)(\partial x,\partial y) =
\tfrac{\i\lambda_1}{4\sqrt{2}} \sum_{k} \sqrt{R_k}\Big(  e^{\tfrac{\sqrt{\lambda_1}}{2}(\mu_k z - \bar{\mu}_k\bar{z})}
 \Big(\mu^2_k -\bar{\mu}^2_k\Big) Z_k
+ e^{\tfrac{\sqrt{\lambda_1}}{2}(-\mu_k z + \bar{\mu}_k\bar{z})}\Big(\mu^2_k -\bar{\mu}^2_k\Big)\bar{Z}_k\Big) \\
&+ \tfrac{\i\lambda_2}{4\sqrt{2}} \sum_{j} \sqrt{R'_j} \Big( e^{\tfrac{\sqrt{\lambda_2}}{2}(\eta_j z - \bar{\eta}_j \bar{z})}
\Big(\eta^2_j -\bar{\eta}^2_j\Big) W_j
+ e^{\tfrac{\sqrt{\lambda_2}}{2}(-\eta_j z + \bar{\eta}_j \bar{z})} \Big(\eta^2_j -\bar{\eta}^2_j\Big)  \overline{W}_j\Big) ,
\end{align*}
so
\begin{align*}
& \la (\nabla d\phi)(\partial x,\partial y) , H\ra = 
\tfrac{\i |H|}{8} \Big(\lambda_1\sum_{k} R_k\Big(\mu^2_k -\bar{\mu}^2_k\Big) 
- \lambda_2\sum_{j}  R'_j\Big(\eta^2_j -\bar{\eta}^2_j\Big) \Big) ,
\end{align*}
and, if $\phi$ is pseudo-umbilic, then 
\begin{align}\label{eqq2}
&\lambda_1\sum_{k} R_k\Big(\mu^2_k -\bar{\mu}^2_k\Big) 
- \lambda_2\sum_{j}  R'_j\Big(\eta^2_j -\bar{\eta}^2_j\Big) =0 .
\end{align}
From Equations~\eqref{eqq1} and~\eqref{eqq2}, we obtain
$$\lambda_1\sum_{k} R_k \mu^2_k - \lambda_2\sum_{j}  R'_j \eta^2_j =0,$$
but
$$\lambda_1\sum_{k} R_k \mu^2_k + \lambda_2\sum_{j}  R'_j \eta^2_j =0 ,$$
so 
$$\lambda_1\sum_{k} R_k \mu^2_k = \lambda_2\sum_{j}  R'_j \eta^2_j =0.$$
Furthermore, it is not difficult to check that the conditions $\lambda_1\sum_{k} R_k \mu^2_k= \lambda_2\sum_{j}  R'_j \eta^2_j=0$ are equivalent with $\phi_{t_1}:\rn^2\to \sn^{2m-1}\left(\tfrac{1}{\sqrt{2}}\right)$, $\phi_{t_2}:\rn^2\to \sn^{2m'-1}\left(\tfrac{1}{\sqrt{2}}\right)$ are harmonic maps and $\phi_{t_1}^* \la , \ra = \tfrac{1-h}{2}\la ,\ra$, $\phi_{t_2}^* \la , \ra = \tfrac{1+h}{2}\la ,\ra$.
\end{proof}

In low dimension, the condition of pseudo-umbilical forbids some combinations of the parameters.

\begin{corollary}
Let $\phi:\rn^2\to \sn^7$ be a CMC proper-biharmonic immersion. Then
$\phi$ is pseudo-umbilical if and only if $m=m'=2$, $R_1=R_2=1/2$, $\mu^2_2=-\mu_1^2$ and
$R'_1=R'_2=1/2$, $\eta^2_2=-\eta^2_1$.
\end{corollary}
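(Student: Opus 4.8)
The plan is to read the pseudo-umbilical condition straight off the preceding proposition and then dispatch it by a short combinatorial argument exploiting the modulus constraints on the parameters. By that proposition, $\phi$ is pseudo-umbilical exactly when $\lambda_1\sum_k R_k\mu_k^2 = \lambda_2\sum_j R'_j\eta_j^2 = 0$. Since $\lambda_1 = 2(1-|H|)$ and $\lambda_2 = 2(1+|H|)$ are both strictly positive for $|H|\in(0,1)$, this is equivalent to the two scalar equations $\sum_k R_k\mu_k^2 = 0$ and $\sum_j R'_j\eta_j^2 = 0$. On the other hand, the target dimension $n=7$ together with the relation $n = 2m+2m'-1$ from Theorem~\ref{thmMiyata} forces $m+m' = 4$, and since the immersion is genuinely of $2$-type (because $|H|\in(0,1)$), both sums are nontrivial, i.e. $m\geq 1$ and $m'\geq 1$.

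The key step is to read $\sum_k R_k\mu_k^2 = 0$ geometrically. With $R_k>0$ and $\sum_k R_k = 1$, this equation asserts that the origin is a convex combination of the points $\mu_k^2$, all of which lie on the unit circle since $|\mu_k|=1$ by Condition~(g). A single unit-modulus number cannot be zero, so $m=1$ is incompatible with $\sum_k R_k\mu_k^2 = 0$; the same argument applied to the primed system rules out $m'=1$. Hence $m,m'\geq 2$, which combined with $m+m'=4$ leaves only $m=m'=2$.

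It then remains to solve the two-term equations. For $m=2$, the relation $R_1\mu_1^2 + R_2\mu_2^2 = 0$ with $|\mu_1^2| = |\mu_2^2| = 1$ gives, upon taking absolute values, $R_1 = R_2$, hence $R_1 = R_2 = 1/2$ by $R_1+R_2=1$, and then $\mu_2^2 = -\mu_1^2$. The identical computation on the primed system yields $R'_1 = R'_2 = 1/2$ and $\eta_2^2 = -\eta_1^2$. The converse direction is an immediate substitution: these values give $\sum_k R_k\mu_k^2 = \tfrac{1}{2}(\mu_1^2 + \mu_2^2) = 0$ and likewise $\sum_j R'_j\eta_j^2 = 0$, so the preceding proposition returns pseudo-umbilicity. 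I expect no serious obstacle; the only point worth checking is that $\mu_2^2 = -\mu_1^2$ (equivalently $\mu_2 = \pm\i\mu_1$) is consistent with the distinctness requirement~(g), and it is, since $\{\mu_1, -\mu_1, \i\mu_1, -\i\mu_1\}$ are four distinct numbers of norm $1$. This completes the characterisation.
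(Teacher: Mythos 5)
Your proof is correct and follows exactly the route the paper intends: the corollary is stated as an immediate consequence of the preceding proposition's characterisation $\lambda_1\sum_k R_k\mu_k^2=\lambda_2\sum_j R'_j\eta_j^2=0$, and your argument (ruling out $m=1$ or $m'=1$ by the unit-modulus constraint, forcing $m=m'=2$ from $2m+2m'-1=7$, then equal weights and antipodal squares by taking absolute values) is the derivation the paper leaves implicit. The distinctness check against Condition~(g) at the end is a sound finishing touch.
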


\subsection{CMC proper-biharmonic flat surfaces in $\sn^5$}

In dimension five, one can obtain formulas linking the different parameters together, highlighting the structure of the set of flat proper-biharmonic CMC surfaces in $\sn^5$.

\begin{theorem}[Structure theorem] \label{thm6}
For a given $h\in (0,1)$ there is a one-parameter family of CMC proper-biharmonic surfaces $\phi_{h,\rho}=\phi_{\rho}:\rn^2\to\mathbb{S}^5$ with mean curvature $h$,
$\rho\in[0,\tfrac{1}{2}\arccos \tfrac{h-1}{1+h}]$, such that $\psi_{\rho}=i\circ\phi_{\rho}:\rn^2 \to \rn^6$ can be written as
\begin{align*}
\psi_{\rho}(z) = & \frac{1}{\sqrt{2}}\Big( e^{\tfrac{\sqrt{\lambda_1}}{2}(z-\bar{z})}Z_1
+e^{\tfrac{\sqrt{\lambda_1}}{2}(-z+\bar{z})}\bar{Z}_1\Big) \\
& +\frac{1}{\sqrt{2}} \sum_{j=1}^2\sqrt{R'_j}\Big(e^{\tfrac{\sqrt{\lambda_2}}{2}(\eta_j z-\bar{\eta}_j\bar{z})}W_j
+ e^{\tfrac{\sqrt{\lambda_2}}{2}(-\eta_j z+\bar{\eta}_j
\bar{z})}\bar{W}_j\Big) ,
\end{align*}
where
\begin{enumerate}
\item[a)] $Z_1=\frac{1}{2}\Big(E_1-\i E_2\Big)$, \\
\item[b)] $W_j=\frac{1}{2}\Big(E_{2(1+j)-1}-\i E_{2(1+j)}\Big)$, \ $j=1,2$, \\
\item[c)] $\{E_1,\ldots,E_6\}$ is an orthonormal basis of $\rn^6$, \\
\item[d)] $\lambda_1=2(1-h)$, $\lambda_2=2(1+h)$,
\end{enumerate}
and $R'_1$, $R'_2$, $\eta_1=e^{\i\rho}$ and $\eta_2=e^{\i\tilde{\rho}}$ are given by
$$
\left(\frac{1-\Big(\frac{1-h}{h+1}\Big)^2}{2\Big(1+ \frac{1-h}{h+1}\cos 2\rho\Big)}, \ 1-\frac{1-\Big(\frac{1-h}{h+1}\Big)^2}{2\Big(1+ \frac{1-h}{h+1}\cos 2\rho\Big)}, \ \rho, \ \tilde{\rho}=\arctan{(-\frac{1}{h\tan{\rho}})} \right),
$$
if $\rho\in(0,\tfrac{1}{2}\arccos \tfrac{h-1}{1+h}]$, and
$$
\Big(\frac{h}{1+h}, \ \frac{1}{1+h}, \ 0, \ -\frac{\pi}{2}\Big),
$$
if $\rho=0$.

Conversely, assume that $\phi:\rn^2\to\mathbb{S}^5$ is a CMC proper-biharmonic surface with mean curvature $h\in (0,1)$. Then, up to isometries of $\mathbb{R}^2$ and $\mathbb{R}^6$, $\psi=i\circ\phi:\rn^2 \to \rn^6$ is one of the above maps.
\end{theorem}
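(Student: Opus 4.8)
\emph{The plan.} I would run both implications through Theorem~\ref{thmMiyata}, which already normalises $\psi=i\circ\phi$ to the stated form subject to Conditions~(a)--(h); the entire argument is then the determination, for $n=5$, of which parameter sets actually occur, followed by a reduction modulo the symmetries recorded in the preceding Remark. First I would fix the combinatorics. For $\sn^5$ the relation $n=2m+2m'-1$ forces $m+m'=3$, and since $h\in(0,1)$ the map is genuinely of $2$-type, so both eigenspaces are present and $m,m'\geq1$; hence $(m,m')\in\{(1,2),(2,1)\}$. The case $(2,1)$ is excluded directly from Condition~(f): with $m'=1$ one has $R'_1=1$, so the $\lambda_2$-term $(1+h)\eta_1^2$ has modulus $1+h$, whereas the $\lambda_1$-term $(1-h)\sum_kR_k\mu_k^2$ has modulus at most $(1-h)\sum_kR_k=1-h<1+h$, and the two cannot cancel. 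Thus $(m,m')=(1,2)$, and applying the rotation $(\mu_k,\eta_j)\mapsto(\bar\mu_1\mu_k,\bar\mu_1\eta_j)$, an isometry of $\rn^2$, I normalise $\mu_1=1$, whence $R_1=1$.

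Next comes the algebra. Writing $c=\frac{1-h}{1+h}=\lambda_1/\lambda_2\in(0,1)$, $\eta_1=e^{\i\rho}$ and $\eta_2=e^{\i\tilde\rho}$, Condition~(f) reads $R'_1e^{2\i\rho}+R'_2e^{2\i\tilde\rho}=-c$ together with $R'_1+R'_2=1$. Taking the modulus of $R'_2e^{2\i\tilde\rho}=-c-R'_1e^{2\i\rho}$ eliminates $\tilde\rho$ and gives $R'_1=\frac{1-c^2}{2(1+c\cos2\rho)}$, exactly the stated value, and $R'_2=1-R'_1$; multiplying~(f) instead by $e^{-\i(\rho+\tilde\rho)}$ and taking the real part gives $\cos(\rho-\tilde\rho)+c\cos(\rho+\tilde\rho)=0$, that is $(1+c)\cos\rho\cos\tilde\rho+(1-c)\sin\rho\sin\tilde\rho=0$, i.e. $\tan\rho\tan\tilde\rho=-\frac{1+c}{1-c}=-\frac1h$, which is the claimed $\tilde\rho=\arctan(-\frac1{h\tan\rho})$. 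The distinctness Conditions~(g),(h) are then automatic: $\mu_1=1\neq-1$, and $\eta_1=\pm\eta_2$ would force $\pm\eta_1^2=-c$ with $|\eta_1^2|=1\neq c$.

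Then I would cut out the fundamental domain. Since $R'_1$ depends only on $\cos2\rho$ and equals $\tfrac12$ exactly when $\cos2\rho=-c$, i.e. $2\rho=\arccos\frac{h-1}{1+h}$, the normalisation $R'_1\leq R'_2$ is equivalent to $\rho\in[0,\tfrac12\arccos\frac{h-1}{1+h}]$ (using the conjugation $\eta_j\mapsto\bar\eta_j$ to take $\rho\geq0$). The swap $\eta_1\leftrightarrow\eta_2$, $R'_1\leftrightarrow R'_2$, a permutation of the $E$-basis and hence an isometry of $\rn^6$, always achieves $R'_1\leq R'_2$, so every solution is equivalent to one in this range, which proves the converse. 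The endpoint $\rho=0$, where the $\tilde\rho$-formula degenerates, is handled by substituting $\eta_1=1$ into~(f): then $R'_1-R'_2=-c$ gives $R'_1=\frac{h}{1+h}$, $R'_2=\frac1{1+h}$ and $\eta_2^2=-1$, i.e. $\tilde\rho=-\frac\pi2$, as stated.

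For the direct statement I would reverse these steps. For $\rho$ in the range the prescribed data satisfy (e) (since $1+c\cos2\rho\geq1-c>0$, so $R'_1\in(0,\tfrac12]$ and $R'_2\in[\tfrac12,1)$), (f) by construction, and (g),(h) as above, while $|\psi_\rho|\equiv1$ because the squared amplitudes sum to $\tfrac12(1+R'_1+R'_2)=1$, placing the image in $\sn^5$. The induced metric is $\tfrac12(f_0\otimes f_0+R'_1\,f_1\otimes f_1+R'_2\,f_2\otimes f_2)$, where $f_0=(0,\sqrt{\lambda_1})$, $f_1=\sqrt{\lambda_2}(\sin\rho,\cos\rho)$, $f_2=\sqrt{\lambda_2}(\sin\tilde\rho,\cos\tilde\rho)$ are the frequency vectors of the three circle factors; its trace is $\tfrac12(\lambda_1+\lambda_2)=2$ automatically, while its off-diagonal entry and the difference of its diagonal entries reproduce precisely the imaginary and real parts of~(f), so $G=\Id$ and $\psi_\rho$ is an isometric immersion whose spectral decomposition carries the eigenvalues $2(1\mp h)$; by \cite{BMO1,BMO2} it is therefore proper-biharmonic, and non-harmonic since $h\neq0$, with mean curvature $h$. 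The computations here are routine once the phase-shift $e^{-\i(\rho+\tilde\rho)}$ is introduced; I expect the delicate point to be the symmetry bookkeeping of the previous paragraph, namely showing that the sign, conjugation and swap symmetries reduce every admissible parameter set to exactly one representative with $\rho\in[0,\tfrac12\arccos\frac{h-1}{1+h}]$, neither omitting nor double-counting, together with the careful treatment of the degenerate endpoint $\rho=0$.
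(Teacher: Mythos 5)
Your proposal is correct, and its skeleton is the same as the paper's: both implications are run through Miyata's normal form (Theorem~\ref{thmMiyata}), Condition~(f) pins down the parameters, and the fundamental domain $\rho\in[0,\tfrac12\arccos\tfrac{h-1}{1+h}]$, $R'_1\le\tfrac12$ is cut out by the same rotation, sign, conjugation and swap symmetries as in Lemma~\ref{lemma1}. Within that skeleton you do three things differently, each an improvement or a completion of the paper's argument. (i) Where the paper simply asserts ``wlog $m=1$, $m'=2$, $R_1=1$'', you prove that $(m,m')=(2,1)$ cannot occur: with $R'_1=1$ the $\lambda_2$-term of~(f) has modulus $1+h$ while the $\lambda_1$-term has modulus at most $1-h$, so cancellation is impossible; this turns the ``wlog'' into a genuine exclusion. (ii) Where the paper extracts the real and imaginary parts of $\eta_2^2$ separately (Equations~\eqref{eq2} and~\eqref{eq3}), obtains the two branches $\tan\tilde\rho=h\tan\rho$ and $\tan\tilde\rho=-1/(h\tan\rho)$, and then needs a separate contradiction argument to discard the first, your single manipulation --- multiply~(f) by $e^{-\i(\rho+\tilde\rho)}$ and take real parts --- gives $(1+c)\cos\rho\cos\tilde\rho+(1-c)\sin\rho\sin\tilde\rho=0$, i.e. $\tan\rho\tan\tilde\rho=-1/h$, in one stroke with no spurious branch. (iii) Where the paper's existence direction is dispatched with ``easily follows from Lemma~\ref{lemma1}'' (implicitly invoking the converse of Miyata's theorem), you verify directly that the prescribed data give an isometric immersion into $\sn^5$: unit norm from the amplitudes, and flatness because the induced metric $G$ has trace $2$ automatically while $G_{yy}-G_{xx}$ and $G_{xy}$ are, up to positive constants, the real and imaginary parts of~(f), forcing $G=\Id$; the spectral decomposition with eigenvalues $2(1\mp h)$ together with the $2$-type characterisation of \cite{BMO1,BMO2} then yields proper-biharmonicity with $|H|=h$, making the construction self-contained. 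The two places you compress --- that~(f) holds ``by construction'' for the prescribed data, and the symmetry bookkeeping --- are sound: the modulus identity makes $\eta_2^2:=(-c-R'_1e^{2\i\rho})/R'_2$ a unit complex number, and your phase relation determines $e^{2\i\tilde\rho}$ uniquely because $\tan\tilde\rho$ fixes $\tilde\rho$ modulo $\pi$; the endpoint $\rho=0$ you treat separately exactly as the paper does.
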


\begin{proof}

Let a map $\phi : \rn^2 \to \sn^5$ be given by Formula~\eqref{formula*}, we can assume wlog that $m=1$, $m'=2$, $R_1=1$ and put $R_1' = s$ and $R_2'=1-s$ 
($s\in (0,1)$).

\begin{lemma} \label{lemma1}
Let $\eta_1 = e^{\i \rho}$ with, because of symmetries of solutions, $\rho \in [0,\tfrac{\pi}{2}]$ and $\eta_2 = e^{\i \tilde{\rho}}$, $\tilde{\rho} \in [-\tfrac{\pi}{2} , \tfrac{\pi}{2})$, then 
\begin{equation*}
\tilde{\rho} = 
\begin{dcases}
-\tfrac{\pi}{2} ,\quad \mbox{ if } \quad \rho=0 , \\
0, \quad \mbox{if} \quad\rho= \tfrac{\pi}{2}, \\
\arctan\left( \frac{-1}{h\tan\rho}\right) ,\quad \mbox{ otherwise.}
\end{dcases}
\end{equation*}
In particular, $\tilde{\rho} \in [-\tfrac{\pi}{2} , 0]$.
\end{lemma}

\begin{proof}
Consider $\mu_1 = \cos\omega_1 + \i \sin\omega_1$ and $z=x+\i y$, then 
\begin{align*}
& e^{\tfrac{\sqrt{\lambda_1}}{2}\left( \mu_1 z - \bar{\mu}_1\bar{z}\right)} Z_1 + 
e^{\tfrac{\sqrt{\lambda_1}}{2}\left( -\mu_1 z + \bar{\mu}_1\bar{z}\right)} \bar{Z}_1 = \\
& \cos\left( \sqrt{\lambda_1}( y\cos\omega_1 + x \sin \omega_1 )\right) E_1 + \sin\left( \sqrt{\lambda_1}( y\cos\omega_1 + x \sin \omega_1 )\right) E_2 .
\end{align*}
We consider now, for a given $h\in (0,1)$,
$$ ( 1  , R'_1 , R'_2 , \mu_1, \eta_1 ,\eta_2 )$$
a solution of (e)--(h) and $\psi : \rn^2 \to \rn^6$ the corresponding CMC proper-biharmonic immersion. Let $T_\alpha : \rn^2 \to \rn^2$ be the rotation of angle $\alpha$. Since the first component of $\psi(T_{\alpha}(z))$ is given by
$$ \cos\left( \sqrt{\lambda_1}(\sin (\alpha + \omega_1) x + \cos (\alpha+\omega_1) y) \right),$$
for $\alpha=-\omega_1$, the composition $\psi\circ T_{-\omega_1}$ is the CMC proper-biharmonic immersion corresponding to 
$$( 1, R'_1 , R'_2 , 1, \mu_1^{-1}\eta_1 ,\mu_1^{-1}\eta_2 )= ( 1, R'_1 , R'_2 , 1 , \bar{\mu_1}\eta_1 ,\bar{\mu_1}\eta_2 ).$$
We re-denote $\bar{\mu_1}\eta_1$ and $\bar{\mu_1}\eta_2$ by $\eta_1= e^{\i\rho}$ and $\eta_2= e^{\i\tilde\rho}$ and we can assume that 
$\rho,\tilde\rho \in [-\tfrac{\pi}{2} , \tfrac{3\pi}{2})$

If we consider the isometry of $\rn^6$, $T(E_j)= E_j$ for $j=1,\dots,6$ except $T(E_4)=-E_4$, then $T\circ\psi$ corresponds to the data
$$( 1 , R'_1 , R'_2 , 1 , -\eta_1 ,\eta_2 )= ( 1 , R'_1 , R'_2 , 1 , e^{\i(\rho+ \pi)} , e^{\i\tilde\rho}).$$
Therefore we re-denote $-\eta_1$ with $\eta_1$ and can assume that $\rho\in [-\tfrac{\pi}{2},\tfrac{\pi}{2})$ and, similarly, 
$\tilde\rho\in [-\tfrac{\pi}{2},\tfrac{\pi}{2})$.

Furthermore, composition of $\psi$ with the symmetry with respect to the $Oy$-axis yields a CMC proper-biharmonic immersion corresponding to the data
$$( 1 , R'_1 , R'_2 , 1 , \bar{\eta_1} ,\bar{\eta_2} )= ( 1  , R'_1 , R'_2 , 1, e^{-\i\rho} , e^{-\i\tilde\rho}),$$
so we can restrict the angles to $\rho\in [0,\tfrac{\pi}{2}]$ and $\tilde\rho\in [-\tfrac{\pi}{2},\tfrac{\pi}{2})$.

Since 
\begin{equation}\label{eq1}
(1-h) + (1+h)s\eta^2_1  + (1+h)(1-s)\eta^2_2 =0 ,
\end{equation}
we have
$$\eta^2_2= \frac{1}{1-s}\Big( \frac{h-1}{h+1} -s\eta^2_1\Big).$$
Since $\eta_2$ is a unit complex number
$$1-s = \Big| \frac{1-h}{h+1} + \eta^2_1 s\Big| = \Big|\frac{1-h}{h+1} +  s\cos 2\rho + \i s \sin 2\rho\Big| .$$
Equivalently
\begin{align*}
(1-s)^2 &=  \Big(\frac{1-h}{h+1}\Big)^2 + s^2 + 2 s\frac{1-h}{h+1} \cos 2\rho ,
\end{align*}
hence
\begin{align*}
s &= \frac{1 - \Big( \tfrac{1-h}{h+1}\Big)^2}{2\Big( 1 + \frac{1-h}{h+1} \cos 2\rho \Big)} = \frac{2h}{(1+h)(1+h + (1-h)\cos 2\rho)}.
\end{align*}
Then, from Equation~\eqref{eq1}, the real part of $\eta_2^2$ is
\begin{align}\label{eq2}
\cos 2\tilde{\rho} &= \frac{h^2 -1 - \cos 2\rho (1+h^2)}{1 + h^2 + (1-h^2) \cos 2\rho}
\end{align}
and its imaginary part is
\begin{align}\label{eq3}
\sin 2\tilde{\rho} &= \frac{-2h}{1 + h^2 + (1-h^2) \cos 2\rho}\sin 2\rho .
\end{align}
Assume that the real part of $\eta_2^2$ is zero then 
$$ \cos 2\rho = \frac{h^2 -1}{h^2+1} \quad \mbox{ and } \quad \sin 2\rho= \frac{2h}{h^2+1},$$
therefore 
$$s= \frac{1+h^2}{(h+1)^2},$$ 
and, using~\eqref{eq1}, we have:
\begin{align*}
\cos 2\tilde{\rho} + \i (1+ \sin 2 \tilde{\rho})=0, \\
\end{align*}
thus $\tilde{\rho} = -\tfrac{\pi}{4}$. Note that $\cos\rho=\tfrac{h}{\sqrt{h^2 + 1}}$ and 
$\sin\rho=\tfrac{1}{\sqrt{h^2 + 1}}$, so $\tan\rho = \tfrac{1}{h}$ and $\tfrac{-1}{h\tan\rho} = -1 = \tan \tilde\rho$.

Assume now that the imaginary part of $\eta_2^2$ vanishes, i.e. $\sin 2\tilde\rho =\sin 2\rho =0$. Then either $\cos 2\rho = 1$ and then $\eta_1^2 =1$, $\eta_2^2=-1$ and $s=\tfrac{h}{h+1}$, so $\rho =0$ and $\tilde\rho =-\tfrac{\pi}{2}$, whilst if $\cos 2\rho = -1$ then $\eta_1^2 =-1$, $\eta_2^2=1$ with $s=\tfrac{1}{h+1}$, so $\rho =\tfrac{\pi}{2}$ and $\tilde\rho =0$.

Finally, if neither the real nor imaginary part of $\eta_2^2$ vanishes, as 
$$\tan 2\tilde\rho = \frac{2 \tan\tilde\rho}{1- \tan^2\tilde\rho} ,$$
so 
$$\tan\tilde\rho= - \frac{\cos 2\tilde{\rho} +1}{\sin 2\tilde{\rho}} \quad  
\mbox{ or } \quad \tan\tilde\rho= \frac{-\cos 2\tilde{\rho} +1}{\sin 2\tilde{\rho}} ,$$
therefore, by Equations~\eqref{eq2} and \eqref{eq3}:
$$\tan\tilde\rho= h \frac{1-\cos 2\rho}{\sin2\rho}= h\tan \rho ,$$ 
or 
$$\tan\tilde\rho= - \frac{1+\cos 2\rho}{h\sin2\rho}= \frac{-1}{h\tan \rho}.$$

We can eliminate the first possibility by checking Equations~\eqref{eq2} and \eqref{eq3}. 
If $\tan\tilde\rho= h\tan \rho$ then, since $\cos2\rho =\frac{1-\tan^2 \rho}{1+\tan^2 \rho}$, 
$$\cos2\tilde\rho = \frac{1-h^2\tan^2 \rho}{1+h^2\tan^2 \rho},$$
but Equation~\eqref{eq2} implies
$$\cos2\tilde\rho =  \frac{(h^2 -1)(1+\tan^2 \rho) - (h^2 +1)(1-\tan^2 \rho)}{(h^2 +1)(1+\tan^2 \rho) + (1-h^2)(1-\tan^2 \rho)},$$
which forces $\tan\rho = \tfrac{1}{h}$, hence $\tan\tilde\rho = 1$. From the imaginary part we have, on the one hand:
$$\sin2\tilde\rho = \frac{2\tan\tilde\rho}{1+\tan^2 \tilde\rho}=1,$$
while, with $\sin2\rho =\frac{2\tan\rho}{1+\tan^2 \rho}$, Equation~\eqref{eq3} shows that
\begin{align*}
\sin2\tilde\rho &= \frac{-2h \tfrac{2\tan\rho}{1+\tan^2\rho}}{1+h^2 + (1-h^2)\tfrac{1-\tan^2\rho}{1+ \tan^2\rho}} = -1 ,
\end{align*}
so we have a contradiction, which does not happen for the second solution $\tan\tilde\rho = \tfrac{-1}{h\tan \rho}$.

Let $\psi : \rn^2 \to \rn^6$ be the CMC proper-biharmonic immersion corresponding to
$$( 1 ,  R'_1 , R'_2 , 1 , \eta_1 ,\eta_2 )= ( 1 , R'_1 , R'_2 , 1 ,e^{\i\rho} , e^{\i\tilde\rho}),$$
where $\rho\in [0,\tfrac{\pi}{2}]$ and $\tilde\rho\in [-\tfrac{\pi}{2},0]$. We compose $\psi$ with the symmetry of $\rn^6$ defined by
$$ T(E_1) = E_1, \, T(E_2) = E_2, \,T(E_3) = E_5, \,T(E_4) = E_6, \,T(E_5) = E_3, \,T(E_6) = E_4, $$
and the isometry of $\rn^2$, $t(x,y)=(-x,y)$, so that the resulting immersion corresponds to the data
$$( 1 , R'_2 , R'_1 , 1 , \bar{\eta_2} ,\bar{\eta_1} )= ( 1 , R'_2 , R'_1 , 1 , e^{-\i\tilde\rho} , e^{-\i\rho}).$$
Therefore, we can assume that $0<R'_1 \leq \tfrac{1}{2}$ and $\rho\in [0,\tfrac{1}{2}\arccos \tfrac{h-1}{h+1}]$.

\end{proof}

The proof of Theorem~\ref{thm6} then easily follows from Lemma~\ref{lemma1}.

\end{proof}

\begin{remark} 
If $\rho=\frac{1}{2}\arccos\frac{h-1}{1+h}$ then the solution
$$
(R'_1, \ R'_1, \ \rho, \ \tilde{\rho})
$$
of \eqref{eq1} is
$$
\Big( \frac{1}{2}, \ \frac{1}{2}, \ \frac{1}{2}\arccos\frac{h-1}{1+h}, \ -\frac{1}{2}\arccos\frac{h-1}{1+h} \Big).
$$
\end{remark}

A useful result which will be used further on is the following lemma.

\begin{lemma}\label{lemma2}
Let $t=\tan \rho/2$, $\rho\in[0,(\pi/2)]$, then $\eta_1 = e^{\i \rho}$ and $\eta_2 = e^{\i \tilde\rho}$ are solutions of 
$$ (1-h) + (1+h)s\eta^2_1  + (1+h)(1-s)\eta^2_2 =0 ,$$
if and only if $s\in[\frac{h}{1+h}, \frac{1}{1+h}]$,
$$ \tan\tilde\rho= \frac{-1}{h\tan \rho} \quad \mbox{when}\quad s\in \left(\frac{h}{1+h}, \frac{1}{1+h}\right) ,$$ 
$\tilde{\rho}=-(\pi/2)$ for $s=h/(h+1)$ and $\tilde{\rho}=0$ for $s=1/(h+1)$,
and
\begin{equation}
t = 
\begin{dcases}
0 \quad \mbox{if} \quad s=\frac{h}{1+h} ,\\
1 \quad \mbox{if} \quad s=\frac{1}{1+h} ,\\
\frac{\sqrt{s(1-h^2)} - \sqrt{h(1-s-hs)}}{\sqrt{s-(1-s)h}} \quad \mbox{otherwise.}
\end{dcases}
\end{equation}
\end{lemma}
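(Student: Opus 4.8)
The plan is to reduce the whole statement to the single relation
\[
\cos 2\rho = \frac{2h - s(1+h)^2}{s(1-h^2)},
\]
and then to recognise the displayed radical formula for $t$ as nothing more than the half-angle identity $\tan(\rho/2)=(1-\cos\rho)/\sin\rho$. First I would rewrite \eqref{eq1} as $(1+h)(1-s)\eta_2^2 = -(1-h)-(1+h)s\,\eta_1^2$ and take moduli of both sides, using $|\eta_1|=|\eta_2|=1$; this gives $(1+h)^2(1-s)^2 = (1-h)^2 + 2(1-h^2)s\cos 2\rho + (1+h)^2 s^2$, which solves into the boxed expression above (the modulus version of the computation already performed in Lemma~\ref{lemma1}). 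Imposing $\cos 2\rho \in [-1,1]$ and clearing the positive denominator $s(1-h^2)$ then reduces, by elementary manipulation, to $\tfrac{h}{1+h}\le s\le\tfrac{1}{1+h}$; since $\cos 2\rho$ is a strictly decreasing function of $s$ mapping this interval bijectively onto $[-1,1]$, each admissible $s$ corresponds to a unique $\rho\in[0,\tfrac{\pi}{2}]$, with the endpoints $\cos 2\rho=1$ and $\cos 2\rho=-1$ giving $\rho=0$ and $\rho=\tfrac{\pi}{2}$ respectively.

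Next I would pass to half-angle data. Substituting the formula for $\cos 2\rho$ into $\cos^2\rho=\tfrac{1+\cos 2\rho}{2}$ and $\sin^2\rho=\tfrac{1-\cos 2\rho}{2}$, the numerators factor cleanly and I obtain
\[
\cos^2\rho = \frac{h(1-s-hs)}{s(1-h^2)}, \qquad \sin^2\rho = \frac{s-(1-s)h}{s(1-h^2)},
\]
which one checks sum to $1$ because $h(1-s-hs)+\bigl(s-(1-s)h\bigr)=s(1-h^2)$. As $\rho\in[0,\tfrac{\pi}{2}]$ both $\cos\rho$ and $\sin\rho$ are non-negative, so taking positive square roots and inserting them into $t=\tan(\rho/2)=(1-\cos\rho)/\sin\rho$ cancels the common factor $\sqrt{s(1-h^2)}$ and yields exactly
\[
t = \frac{\sqrt{s(1-h^2)} - \sqrt{h(1-s-hs)}}{\sqrt{s-(1-s)h}}.
\]
The boundary value $t=0$ at $s=\tfrac{h}{1+h}$ arises as the removable $0/0$ limit (there both the numerator and the radicand $s-(1-s)h$ vanish), while $t=1$ at $s=\tfrac{1}{1+h}$ follows by direct substitution, the term $h(1-s-hs)$ vanishing.

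For the dependence of $\tilde\rho$ on $s$ I would simply invoke Lemma~\ref{lemma1}: once $\eta_1^2$ and $s$ are fixed, $\eta_2^2=\tfrac{1}{1-s}\bigl(\tfrac{h-1}{h+1}-s\eta_1^2\bigr)$ is determined, its imaginary part forces $\sin 2\tilde\rho\le 0$ (hence $\tilde\rho\in[-\tfrac{\pi}{2},0]$), and its argument produces $\tan\tilde\rho=-1/(h\tan\rho)$ on the open interval, degenerating to $\tilde\rho=-\tfrac{\pi}{2}$ when $\rho=0$ and to $\tilde\rho=0$ when $\rho=\tfrac{\pi}{2}$. The converse implication is then read off by running these reversible steps backwards: given $s$ in range with $\rho=2\arctan t$ and $\tilde\rho$ as prescribed, one reconstructs $\eta_1^2,\eta_2^2$ and checks \eqref{eq1} directly. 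The routine algebra is harmless; the only step carrying genuine content is the factorisation of $\cos^2\rho$ and $\sin^2\rho$ into $h(1-s-hs)$ and $s-(1-s)h$ over the common denominator $s(1-h^2)$, since it is precisely this that makes the half-angle identity reproduce the stated radical. The main bookkeeping obstacle is keeping the branch of $\tilde\rho$ and the signs of the square roots consistent at the two endpoints, but both are pinned down by the restriction $\rho\in[0,\tfrac{\pi}{2}]$ inherited from Lemma~\ref{lemma1}.
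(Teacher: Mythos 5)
Your proposal is correct, and its computations all check out: solving the modulus form of \eqref{eq1} for $\cos 2\rho$ gives $\cos 2\rho=\tfrac{2h-s(1+h)^2}{s(1-h^2)}$, the constraint $\cos 2\rho\in[-1,1]$ is precisely $\tfrac{h}{1+h}\le s\le\tfrac{1}{1+h}$, the factorisations $\cos^2\rho=\tfrac{h(1-s-hs)}{s(1-h^2)}$ and $\sin^2\rho=\tfrac{s-(1-s)h}{s(1-h^2)}$ are right (their sum is indeed $1$), and the half-angle identity $\tan(\rho/2)=(1-\cos\rho)/\sin\rho$ then yields the stated radical at once, with both endpoint cases handled properly. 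This is, however, a genuinely different route from the paper's for the core computation. The paper keeps Lemma~\ref{lemma1}'s relation in the form $s=\tfrac{2h}{(1+h)(1+h+(1-h)\cos 2\rho)}$, gets the range of $s$ from monotonicity of $s(\rho)$ rather than from $|\cos 2\rho|\le 1$, and then substitutes $\cos 2\rho=\tfrac{1-6t^2+t^4}{(1+t^2)^2}$ to obtain the biquadratic $t^4-2at^2+1=0$ with $a=\tfrac{s+h-hs-2sh^2}{s-(1-s)h}>1$; it must then argue that of the reciprocal pair of roots $t^2=a\pm\sqrt{a^2-1}$ only $a-\sqrt{a^2-1}$ is admissible (as $t\in(0,1)$), and finally denest $\sqrt{a-\sqrt{a^2-1}}$ into the stated closed form, a simplification the paper compresses into an ``i.e.''. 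Your route buys exactly the elimination of those two steps --- no root selection among the quartic's solutions and no radical denesting --- and it additionally makes transparent why $s=\tfrac{h}{1+h}$ is a removable $0/0$ point of the formula. The cost is nil: like the paper, you still rely on Lemma~\ref{lemma1} for the reduction to \eqref{eq1}, the branch bookkeeping $\rho\in[0,\tfrac{\pi}{2}]$, $\tilde\rho\in[-\tfrac{\pi}{2},0]$, and the relation $\tan\tilde\rho=-1/(h\tan\rho)$, and both arguments treat the converse implication with the same brief appeal to reversibility.
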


\begin{proof}
From Lemma~\ref{lemma1} we know that
\begin{equation}\label{eq1lemma2} 
s = \frac{2h}{(1+h)(1+h + (1-h)\cos 2\rho)},
\end{equation}
with $\rho \in [0,\tfrac{\pi}{2}]$. The function $s=s(\rho)$ is strictly increasing on $[0,\tfrac{\pi}{2}]$ and since $s(0)= \tfrac{h}{h+1}$, when $(\rho,\tilde\rho)=(0,-\tfrac{\pi}{2})$, and $s(\tfrac{\pi}{2})= \tfrac{1}{h+1}$, for $(\rho,\tilde\rho)=(\tfrac{\pi}{2},0)$, we have $s\in [\tfrac{h}{h+1},\tfrac{1}{h+1}]$.

Assume $s\in \left(\tfrac{h}{h+1},\tfrac{1}{h+1}\right)$ then, working with the variable $t=\tan \rho/2$, so that $t\in (0,1)$, and
$$\cos 2\rho = \frac{1-6t^2 + t^4}{(1+t^2)^2},$$ 
we obtain the equation
$$ t^4 -2 a t^2 +1 =0,$$
where $a = \tfrac{s+h-hs-2sh^2}{s-(1-s)h}$ is strictly greater than $1$. The only admissible solution is $t^2 = a - \sqrt{a^2 -1}$, i.e.
$$ t = \frac{\sqrt{s(1-h^2)} - \sqrt{h(1-s-hs)}}{\sqrt{s-(1-s)h}}.$$
\end{proof}

An immediate consequence of Theorem~\ref{thm6} is the existence, for any $h\in (0,1)$ of proper-biharmonic planes in $\sn^5$ with CMC equal to $h$.

\begin{corollary}[Existence on $\rn^2$]\label{cor2}
Let $h\in (0,1)$, then there exist proper-biharmonic immersions of $\rn^2$ into $\sn^5$ with CMC equal to $h$.
\end{corollary}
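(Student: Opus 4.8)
The statement is an immediate corollary of the Structure Theorem~\ref{thm6}, so the plan is simply to extract one concrete immersion from the family that theorem produces. For a fixed $h\in(0,1)$, Theorem~\ref{thm6} furnishes, for each $\rho$ in the closed interval $[0,\tfrac{1}{2}\arccos\tfrac{h-1}{1+h}]$, a CMC proper-biharmonic immersion $\phi_{h,\rho}:\rn^2\to\sn^5$ with $|H|=h$. Since existence only requires a single such map, it suffices to check that this parameter interval is nonempty and to name a point in it.

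First I would verify that the interval is nondegenerate. Because $h\in(0,1)$, the quotient $\tfrac{h-1}{1+h}$ lies in $(-1,0)$, so $\arccos\tfrac{h-1}{1+h}\in(\tfrac{\pi}{2},\pi)$ and the right endpoint $\tfrac{1}{2}\arccos\tfrac{h-1}{1+h}$ is a strictly positive number in $(\tfrac{\pi}{4},\tfrac{\pi}{2})$; in particular $\rho=0$ belongs to the interval. Taking $\rho=0$, the theorem supplies the explicit data $(R'_1,R'_2,\rho,\tilde{\rho})=(\tfrac{h}{1+h},\tfrac{1}{1+h},0,-\tfrac{\pi}{2})$, hence the explicit map $\psi_0=i\circ\phi_{h,0}$ with $\eta_1=1$ and $\eta_2=e^{-\i\pi/2}=-\i$. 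This $\phi_{h,0}$ is the desired proper-biharmonic immersion of $\rn^2$ into $\sn^5$ of constant mean curvature $h$; properness is automatic from Theorem~\ref{thm6} (equivalently from Proposition~\ref{prop1}, since $h\in(0,1)$ excludes the minimal case).

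I expect no genuine obstacle: all of the analytic and computational content lives in Theorem~\ref{thm6}, and the only verification the corollary itself demands is the trivial inequality $\tfrac{h-1}{1+h}<1$, which guarantees that the admissible range of $\rho$ is not a single degenerate point. Should one prefer a self-contained argument, the alternative is to substitute the above data directly into Formula~\eqref{formula*} and confirm Conditions~(e)--(h) by hand, but this merely re-proves a special case of Theorem~\ref{thm6} and adds nothing beyond the invocation just described.
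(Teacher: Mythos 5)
Your proposal is correct and matches the paper exactly: the paper states Corollary~\ref{cor2} as an immediate consequence of Theorem~\ref{thm6}, with no further argument needed. Your additional check that the parameter interval $[0,\tfrac{1}{2}\arccos\tfrac{h-1}{1+h}]$ is nonempty and your explicit choice $\rho=0$ with data $(\tfrac{h}{1+h},\tfrac{1}{1+h},0,-\tfrac{\pi}{2})$ are a harmless (and correct) elaboration of the same invocation.
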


In order to quotient a CMC proper-biharmonic immersion $\phi_{h,\rho} = \phi : \rn^2 \to \sn^5$ to a cylinder or a torus, we start with some remarks. We observe first that if we denote $\frac{\sqrt{\lambda_2}}{2} (\eta_j z - \bar{\eta_j}\bar{z}) = \i \theta_j$, $\theta_j =\theta_j (x,y)$ real number, then 
$\theta_j = \langle \i \sqrt{\lambda_2}\bar{\eta_j} , z \rangle$, where $\langle , \rangle$ denotes the usual inner product on $\rn^2$. We have
$$e^{\tfrac{\sqrt{\lambda_2}}{2}(\eta_j z - \bar{\eta}_j \bar{z})}W_j
+ e^{\tfrac{\sqrt{\lambda_2}}{2}(-\eta_j z + \bar{\eta}_j \bar{z})}\overline{W}_j = \cos\theta_j E_{2(1+j) -1} + \sin\theta_j E_{2(1+j)},$$
and from here we get that $\psi(z_1) = \psi(z_2)$ is equivalent to $\psi(z_1 -z_2) = \psi(0)$. We define
\begin{align*}
\Lambda_{\psi} &= \left\{ z\in \rn^2 : \psi(z) = \psi(0)\right\} \\
&= \left\{ z\in \rn^2 : \langle \i\sqrt{\lambda_1},z\rangle \equiv  \langle \i\sqrt{\lambda_2}\bar{\eta_1},z\rangle \equiv \langle \i\sqrt{\lambda_2}\bar{\eta_2},z\rangle \equiv 0 \pmod{2\pi}\right\}
\end{align*}

We observe that $\Lambda_{\psi}$ is a discrete lattice and the map $\psi$ quotients to an injective map, also denoted by $\psi$, $\psi : \rn^2 /\Lambda_{\psi} \to \rn^6$ or $\phi : \rn^2 /\Lambda_{\psi} \to \sn^5$. In the following, as we are interested in a existence result, we will assume that $\rho\in [0,\tfrac{\pi}{2})$.

\begin{proposition}\label{prop10}
Let $\phi_{h,\rho} = \phi_\rho : \rn^2 \to \sn^5$ be a CMC proper-biharmonic immersion with $|H|=h\in (0,1)$ and $\rho\in [0,\tfrac{\pi}{2})$. We have
\begin{itemize}
\item[i)] If $v= K_2 \left(\tfrac{2\pi}{\sqrt{\lambda_2}}, 0\right)$, $K_2\in \zn$, then $\phi_0 (0) = \phi_0 (v)$.
\item[ii)] If $v= \Big( \tfrac{2\pi}{\sin \rho}\Big( \tfrac{K_1}{\sqrt{\lambda_2}} - \tfrac{K_0}{\sqrt{\lambda_1}}\cos\rho \Big) , \tfrac{2\pi}{\sqrt{\lambda_1}} K_0 \Big)$,
where $K_0 ,K_1 \in \zn$ such that $|K_1 - \sqrt{\tfrac{\lambda_2}{\lambda_1}} K_0| >0$ and $\rho\in (0,\tfrac{\pi}{2})$ such that
$$ \frac{\sin\tilde\rho}{\sin\rho}\Big( K_1 - \sqrt{\tfrac{\lambda_2}{\lambda_1}} K_0 \cos\rho\Big) + \sqrt{\tfrac{\lambda_2}{\lambda_1}} K_0\cos\tilde\rho \in \zn ,$$
then $\phi_{\rho} (0) = \phi_{\rho} (v)$.
\end{itemize}
\end{proposition}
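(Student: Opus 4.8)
The plan is to reduce both assertions to membership in the period lattice
$$\Lambda_{\psi}=\left\{z\in\rn^2:\la\i\sqrt{\lambda_1},z\ra\equiv\la\i\sqrt{\lambda_2}\bar{\eta_1},z\ra\equiv\la\i\sqrt{\lambda_2}\bar{\eta_2},z\ra\equiv 0\pmod{2\pi}\right\}$$
computed just before the statement. Since the inclusion $i:\sn^5\to\rn^6$ is injective, $\phi_{\rho}(0)=\phi_{\rho}(v)$ is equivalent to $\psi_{\rho}(0)=\psi_{\rho}(v)$, which by the remarks preceding the proposition is equivalent to $v\in\Lambda_{\psi}$. Writing $z=x+\i y$, $\eta_1=e^{\i\rho}$, $\eta_2=e^{\i\tilde\rho}$, I would first put the three congruences in real coordinates: a direct computation gives $\la\i\sqrt{\lambda_1},z\ra=\sqrt{\lambda_1}\,y$, while the block identity $\tfrac{\sqrt{\lambda_2}}{2}(\eta_j z-\bar{\eta}_j\bar z)=\i\theta_j$ yields $\theta_j=\sqrt{\lambda_2}(x\sin\alpha_j+y\cos\alpha_j)$ with $\alpha_1=\rho$, $\alpha_2=\tilde\rho$. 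Thus $v=(x,y)\in\Lambda_{\psi}$ exactly when
$$\sqrt{\lambda_1}\,y\equiv 0,\qquad \sqrt{\lambda_2}(x\sin\rho+y\cos\rho)\equiv 0,\qquad \sqrt{\lambda_2}(x\sin\tilde\rho+y\cos\tilde\rho)\equiv 0\pmod{2\pi}.$$

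For item i) the parameters are $\rho=0$ and $\tilde\rho=-\tfrac{\pi}{2}$ by Lemma~\ref{lemma1}, so the three congruences read $\sqrt{\lambda_1}y\equiv 0$, $\sqrt{\lambda_2}y\equiv 0$, and $-\sqrt{\lambda_2}x\equiv 0\pmod{2\pi}$. Substituting $v=K_2(\tfrac{2\pi}{\sqrt{\lambda_2}},0)$, the first two hold trivially because $y=0$, and the third gives $-\sqrt{\lambda_2}x=-2\pi K_2\equiv 0$; hence $v\in\Lambda_{\psi}$ and $\phi_0(0)=\phi_0(v)$.

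For item ii) the cleanest route is constructive. Imposing the first congruence forces $y=\tfrac{2\pi K_0}{\sqrt{\lambda_1}}$ for some $K_0\in\zn$; since $\rho\in(0,\tfrac{\pi}{2})$ we have $\sin\rho\neq 0$, so the second congruence can be solved for $x$, producing $x=\tfrac{2\pi}{\sin\rho}\big(\tfrac{K_1}{\sqrt{\lambda_2}}-\tfrac{K_0}{\sqrt{\lambda_1}}\cos\rho\big)$ for some $K_1\in\zn$, which is exactly the vector $v$ in the statement. Feeding this $(x,y)$ into the third congruence, the expression $\theta_2=\sqrt{\lambda_2}(x\sin\tilde\rho+y\cos\tilde\rho)$ collapses, after collecting terms, into $2\pi$ times the quantity $\tfrac{\sin\tilde\rho}{\sin\rho}\big(K_1-\sqrt{\tfrac{\lambda_2}{\lambda_1}}K_0\cos\rho\big)+\sqrt{\tfrac{\lambda_2}{\lambda_1}}K_0\cos\tilde\rho$, so $\theta_2\equiv 0\pmod{2\pi}$ precisely when this quantity lies in $\zn$ — the stated hypothesis. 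Therefore $v\in\Lambda_{\psi}$ and $\phi_{\rho}(0)=\phi_{\rho}(v)$.

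There is no analytic difficulty here; the only point requiring care is the bookkeeping in this last step, namely tracking the $\cos\rho$ and $\tfrac{K_0}{\sqrt{\lambda_1}}$ contributions so that the $x\sin\rho$ term cancels the $y\cos\rho$ term in the second congruence (leaving exactly $2\pi K_1$) and so that the residual combination in the third congruence is verified to be genuinely $2\pi$ times the displayed integrality expression. Everything is a finite trigonometric verification driven by the normal form of Theorem~\ref{thm6} together with the value of $\tilde\rho$ from Lemma~\ref{lemma1}.
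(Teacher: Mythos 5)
Your proposal is correct and follows essentially the same route as the paper: both reduce $\phi_\rho(0)=\phi_\rho(v)$ to the three lattice congruences defining $\Lambda_\psi$, solve the first two for the coordinates of $v$ (giving exactly the stated vector), and observe that the third congruence is precisely $2\pi$ times the displayed integrality condition. The paper's printed proof additionally analyses the limits of the function $F(\rho)$ to produce infinitely many admissible $\rho$, but that material serves the subsequent corollary on cylinders rather than the proposition as stated, so your omission of it is not a gap.
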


\begin{proof}
We know that $\phi_{\rho} (0) = \phi_{\rho} (v)$ if and only if
$$\langle \i\sqrt{\lambda_1},v\rangle \equiv  \langle \i\sqrt{\lambda_2}\bar{\eta_1},v\rangle \equiv \langle \i\sqrt{\lambda_2}\bar{\eta_2},v\rangle \equiv 0 \pmod{2\pi},$$
and, if $v=(T,V)$
\begin{align}
\langle \i\sqrt{\lambda_1},v\rangle &= 2\pi K_0 \Leftrightarrow V= \tfrac{2\pi}{\sqrt{\lambda_1}} K_0 , \notag\\
\langle \i\sqrt{\lambda_2}\bar{\eta_1},v\rangle &= 2\pi K_1 \Leftrightarrow T \sin\rho  = 2\pi\left( \tfrac{K_1}{\sqrt{\lambda_2}} - \tfrac{K_0}{\sqrt{\lambda_1}}\cos\rho \right) , \label{condition1}\\
\langle\i\sqrt{\lambda_2}\bar{\eta_2},v\rangle &= 2\pi K_2 \Leftrightarrow T \sin\tilde\rho  = 2\pi\left( \tfrac{K_2}{\sqrt{\lambda_2}} - \tfrac{K_0}{\sqrt{\lambda_1}}\cos\tilde\rho \right) . \label{condition2}
\end{align}
Case i): Assume now that $\rho=0$, and therefore $\tilde\rho = -\frac{\pi}{2}$, so \eqref{condition1} becomes $K_1 = K_0 \sqrt{\tfrac{\lambda_2}{\lambda_1}}$ and 
\eqref{condition2} is $T= - \tfrac{2\pi K_2}{\sqrt{\lambda_2}}$. If $K_0=0$ then $K_1=0$ and $v$ can be chosen to be 
$$v= \left( - \tfrac{2\pi K_2}{\sqrt{\lambda_2}} , 0 \right), \quad K_2 \in \zn.$$
Case ii): Assume that $\rho \in (0,\tfrac{\pi}{2})$. Then Condition~\eqref{condition1} becomes
$$ T = \frac{2\pi}{\sin\rho} \left( \tfrac{K_1}{\sqrt{\lambda_2}} - \tfrac{K_0}{\sqrt{\lambda_1}}\cos\rho \right)  ,$$
and Condition~\eqref{condition2} yields
$$ K_2 = \frac{\sin\tilde\rho}{\sin\rho} \left( K_1 - \sqrt{\frac{\lambda_2}{\lambda_1}} K_0 \cos\rho \right) + \sqrt{\frac{\lambda_2}{\lambda_1}}K_0 \cos\tilde\rho .$$
From $\tan \tilde\rho = \frac{-1}{h\tan\rho}$, we get that $\rho$ goes to $0$ if and only if $\tilde\rho$ goes to $-\tfrac{\pi}{2}$ and $\rho$ goes to $\tfrac{\pi}{2}$ if and only if $\tilde\rho$ goes to $0$.
Let 
$$ F(\rho) = \frac{\sin\tilde\rho}{\sin\rho} \left( K_1 - \sqrt{\frac{\lambda_2}{\lambda_1}} K_0 \cos\rho \right) + \sqrt{\frac{\lambda_2}{\lambda_1}}K_0 \cos\tilde\rho .$$ 
Assume that $K_0$ and $K_1$ satisfy $\left|K_1 - \sqrt{\tfrac{\lambda_2}{\lambda_1}} K_0\right| >0$. We have 
\begin{align*}
\lim_{\rho\to 0} F(\rho) &= - \infty \sgn\left(K_1 - \sqrt{\tfrac{\lambda_2}{\lambda_1}} K_0\right) \\
\lim_{\rho\to \tfrac{\pi}{2}} F(\rho) &=  \sqrt{\tfrac{\lambda_2}{\lambda_1}} K_0 .
\end{align*}
Therefore, there exists an infinite number of $\rho\in (0,\tfrac{\pi}{2})$ such that $F(\rho)\in\zn$.
\end{proof}

\begin{comment}
\begin{remark}
Let $\rho \in (0,\tfrac{\pi}{2})$ and $K_1 =1$ and $K_0=-1$. Then $\lim_{\rho\to 0} F(\rho) = -\infty$ and $\lim_{\rho\to \tfrac{\pi}{2}} F(\rho) =  \sqrt{\tfrac{\lambda_2}{\lambda_1}}$. Then $\left(-\infty , \sqrt{\tfrac{\lambda_2}{\lambda_1}}\right) \subset F(0,\tfrac{\pi}{2})$. Let $\rho_1 , \rho_2 \in (0,\tfrac{\pi}{2})$ such that 
$F(\rho_1) = K_{2,1}$ and $F(\rho_1) = K_{2,2}$, $K_{2,1}\neq K_{2,2}$, and let 
$$ v_i = \Big( \tfrac{2\pi}{\sin \rho_i}\Big( \tfrac{1}{\sqrt{\lambda_2}} + \tfrac{\cos\rho_i}{\sqrt{\lambda_1}} \Big), - \frac{2\pi}{\sqrt{\lambda_1}}\Big) , \quad i=1,2.$$
Consider
$$ G(\rho) = \tfrac{2\pi}{\sin \rho}\Big( \tfrac{1}{\sqrt{\lambda_2}} + \tfrac{\cos\rho}{\sqrt{\lambda_1}} \Big),$$
then
$$G'(\rho) =\tfrac{-2\pi}{\sqrt{\lambda_2}} \frac{\cos\rho}{\sin^2 \rho} - \tfrac{2\pi}{\sqrt{\lambda_1}} \frac{1}{\sin^2 \rho} <0,$$
so $G$ is injective and, as $\rho_1\neq\rho_2$, $v_1\neq v_2$.
\end{remark}
\end{comment}

A mere examination of the condition required for periodicity of solutions, Proposition~\ref{prop10}, shows that it will always be possible to find cylinders of CMC $h$ in $\sn^5$, for any value of $h$ in $(0,1)$.

\begin{corollary}[Existence on cylinders]\label{cor3}
Let $h\in (0,1)$, then there exist proper-biharmonic cylinders in $\sn^5$ with CMC equal to $h$.
\end{corollary}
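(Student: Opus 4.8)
The plan is to produce, for each $h\in(0,1)$, a single translation of $\rn^2$ under which the immersion $\phi_{h,0}$ of Theorem~\ref{thm6} is invariant, and then to quotient $\rn^2$ by the rank-one lattice generated by that translation; the descended map will be a CMC proper-biharmonic immersion of a cylinder into $\sn^5$ with mean curvature $h$. The point is that, unlike a torus, a cylinder requires only \emph{one} periodicity direction, and such a direction is furnished for \emph{every} $h$ by Case~i) of Proposition~\ref{prop10}, with no arithmetic condition on $h$.

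First I would fix $h\in(0,1)$ and take the immersion $\phi_0=\phi_{h,0}:\rn^2\to\sn^5$ given by Theorem~\ref{thm6} at the boundary value $\rho=0$, where $(R'_1,R'_2,\rho,\tilde\rho)=(\tfrac{h}{1+h},\tfrac{1}{1+h},0,-\tfrac{\pi}{2})$. This is a CMC proper-biharmonic immersion of mean curvature $h$, so the only remaining task is to exhibit a periodicity and descend the map. I would then invoke Proposition~\ref{prop10}~i): with $K_2=1$, the vector $v=\left(\tfrac{2\pi}{\sqrt{\lambda_2}},0\right)$ satisfies $\phi_0(0)=\phi_0(v)$, that is $v\in\Lambda_\psi$, where $\psi=i\circ\phi_0$. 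By the translation property recorded just before the definition of $\Lambda_\psi$, namely that $\psi(z_1)=\psi(z_2)$ is equivalent to $\psi(z_1-z_2)=\psi(0)$, the membership $v\in\Lambda_\psi$ upgrades to full periodicity $\psi(z+v)=\psi(z)$ for all $z$, and the same holds for every integer multiple of $v$. Hence $\zn v\subset\Lambda_\psi$ and $\phi_0$ descends to a well-defined map on the quotient $\rn^2/\zn v$.

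Finally I would observe that $\rn^2/\zn v\cong\sn^1\times\rn$ is a cylinder, and that biharmonicity, being a local condition, together with the constancy of $|H|=h$, are inherited by the descended map, which remains an immersion. This yields a proper-biharmonic cylinder of CMC $h$ in $\sn^5$ for each $h\in(0,1)$, as required.

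I expect no genuine obstacle: the content is entirely contained in Case~i) of Proposition~\ref{prop10}, which holds uniformly in $h$, in sharp contrast with the torus case of Theorem~\ref{thm7}, where double periodicity forces an arithmetic restriction on $h$. The one point deserving care is to quotient only by the rank-one subgroup $\zn v$, so that the result is genuinely a cylinder and not inadvertently a torus or a plane. In fact, for $\rho=0$ the defining congruences of $\Lambda_\psi$ reduce to $\sqrt{\lambda_2}\,x\in 2\pi\zn$ together with $\sqrt{\lambda_1}\,y,\sqrt{\lambda_2}\,y\in 2\pi\zn$; whenever $\sqrt{\lambda_1/\lambda_2}=\sqrt{(1-h)/(1+h)}$ is irrational the latter force $y=0$, so $\Lambda_\psi=\zn v$ exactly and the cylinder immersion is even injective, though this refinement is not needed for the statement.
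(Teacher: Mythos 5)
Your proof is correct and takes essentially the same route as the paper, which deduces Corollary~\ref{cor3} directly from case~i) of Proposition~\ref{prop10}: a single periodicity direction $v=\bigl(\tfrac{2\pi}{\sqrt{\lambda_2}},0\bigr)$ exists for every $h$, and quotienting $\rn^2$ by $\zn v$ yields the cylinder. Your extra care—upgrading $v\in\Lambda_\psi$ to full translation invariance, quotienting only by the rank-one subgroup $\zn v$, and noting that biharmonicity, the immersion property and $|H|=h$ descend—just makes explicit what the paper leaves implicit.
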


Let 
$$\phi_{h,0} : \rn^2 \to \sn^5$$
be the map given by Formula~\eqref{formula*} with $\rho=0$ and $\tilde\rho= -\tfrac{\pi}{2}$. Then the lattice 
$\Lambda_{\psi_{h,0}} = \{ z\in \rn^2 : \psi_{h,0}(z) = \psi_{h,0}(0)\}$ on which $\psi_{h,0}$ is constant, is equal to the lattice
$$\left\{ m v_2 + n v_1 : m,n\in \zn \quad \mbox{s.t.}\quad m \sqrt{\tfrac{\lambda_2}{\lambda_1}} \in \zn\right\} ,$$
where $v_1 = \left( - \tfrac{2\pi}{\sqrt{\lambda_2}} , 0\right)$ and $v_2 = \left( 0 ,  \tfrac{2\pi}{\sqrt{\lambda_1}} \right)$.

If $\sqrt{\tfrac{\lambda_2}{\lambda_1}} \in \rn\setminus \qn$, then the rank of $\Lambda_{\psi_{h,0}}$ is one and the quotient map $\phi_{h,0}$ from the cylinder 
$\rn^2/\Lambda_{\psi_{h,0}}$ into $\sn^5$ is injective. If $\sqrt{\tfrac{\lambda_2}{\lambda_1}} = \tfrac{a}{b}$ then
$$\Lambda_{\psi_{h,0}} = \left\{ m (bv_2) + n v_1 : m,n\in \zn \right\} ,$$
has rank two.

\begin{proposition}\label{prop?}
The CMC proper-biharmonic immersion $\phi_{h,0} : \rn^2 \to \sn^5$ quotients to a cylinder for any value of $h\in (0,1)$. Moreover, $\phi_{h,0}$ is an injective map from a cylinder to $\sn^5$ if and only if $\sqrt{\tfrac{\lambda_2}{\lambda_1}}$ is irrational and it quotients to a
torus if and only if $\sqrt{\tfrac{\lambda_2}{\lambda_1}}$  is rational.
\end{proposition}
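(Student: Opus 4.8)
The plan is to read everything off the explicit description of the lattice $\Lambda_{\psi_{h,0}}$ recorded just before the statement, the only real input being a rationality dichotomy for the number $\sqrt{\lambda_2/\lambda_1} = \sqrt{(1+h)/(1-h)}$. First I would note that $v_1 = \left(-\tfrac{2\pi}{\sqrt{\lambda_2}},0\right)$ always belongs to $\Lambda_{\psi_{h,0}}$, since it corresponds to $m=0$, $n=1$ in the description $\Lambda_{\psi_{h,0}} = \{mv_2 + nv_1 : m\sqrt{\lambda_2/\lambda_1}\in\zn\}$. Hence $\psi_{h,0}$, and therefore $\phi_{h,0}$, is $v_1$-periodic and descends to a well-defined immersion of the cylinder $\rn^2/\langle v_1\rangle$ into $\sn^5$, for every $h\in(0,1)$. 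This establishes the first assertion.

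Next, for the injectivity and torus statements I would analyse the \emph{full} lattice through the arithmetic condition $m\sqrt{\lambda_2/\lambda_1}\in\zn$. If $\sqrt{\lambda_2/\lambda_1}$ is irrational, then for $m\neq 0$ the number $m\sqrt{\lambda_2/\lambda_1}$ is irrational, hence never an integer, so the condition forces $m=0$ and $\Lambda_{\psi_{h,0}} = \langle v_1\rangle$ is a rank-one lattice. Using that $\Lambda_{\psi_{h,0}}$ is \emph{exactly} the constancy set $\{z : \psi_{h,0}(z)=\psi_{h,0}(0)\}$, together with the translation-invariance observation $\psi_{h,0}(z_1)=\psi_{h,0}(z_2)\Leftrightarrow\psi_{h,0}(z_1-z_2)=\psi_{h,0}(0)$ already established, the induced map from $\rn^2/\langle v_1\rangle$ is injective; moreover, since $\Lambda_{\psi_{h,0}}$ has rank one it contains no rank-two sublattice, so $\phi_{h,0}$ cannot descend to any torus. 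Conversely, if $\sqrt{\lambda_2/\lambda_1}=a/b$ in lowest terms, then $m\,a/b\in\zn$ precisely when $b\mid m$, whence $\Lambda_{\psi_{h,0}} = \langle b v_2, v_1\rangle$ is a rank-two lattice; the quotient $\rn^2/\Lambda_{\psi_{h,0}}$ is then a torus to which $\phi_{h,0}$ descends injectively, while $b v_2\in\Lambda_{\psi_{h,0}}\setminus\langle v_1\rangle$ shows that the cylinder map $\rn^2/\langle v_1\rangle\to\sn^5$ fails to be injective in this case.

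I do not expect a genuine obstacle here: once the lattice has been identified the argument is a short exercise in elementary number theory. The single point demanding care is the passage from ``the lattice equals the constancy set'' to injectivity of the quotient map, which rests on the equivalence $\psi_{h,0}(z_1)=\psi_{h,0}(z_2)\Leftrightarrow z_1-z_2\in\Lambda_{\psi_{h,0}}$ and on the fact that $\langle v_1\rangle$ (resp. $\langle b v_2, v_1\rangle$) is not merely contained in, but equal to, $\Lambda_{\psi_{h,0}}$ in the irrational (resp. rational) case.
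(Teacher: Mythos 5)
Your proposal is correct and follows essentially the same route as the paper: the paper's (implicit) proof consists precisely of the lattice identification $\Lambda_{\psi_{h,0}} = \{mv_2+nv_1 : m\sqrt{\lambda_2/\lambda_1}\in\zn\}$ stated just before the proposition, the observation that the constancy set equals the period set and that the quotient by the full lattice is injective, and the rank-one/rank-two dichotomy according to whether $\sqrt{\lambda_2/\lambda_1}$ is irrational or rational. Your only addition is the explicit ``lowest terms'' bookkeeping ($b\mid m$), which the paper leaves implicit.
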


\begin{remark}\label{rmk4}
It is easy to see that $\sqrt{\tfrac{\lambda_2}{\lambda_1}}$ is rational if and only if
$h=\frac{q^2-1}{q^2+1}$, where $q$ is a rational number strictly greater than
$1$, and the set $\{ \frac{q^2-1}{q^2+1} : q\in \qn, q>1\}$ is dense in $(0,1)$.
\end{remark}

Though double periodicity, and therefore quotients to tori, is not automatic, as for cylinders, we exhibit a condition on the value of $h$, in terms of a pair of rational numbers, which is equivalent to the existence of proper-biharmonic tori of CMC $h$ in $\sn^5$. Note that this formula rules out the possibility of proper-biharmonic tori with irrational CMC. In particular, the set of admissible values of $h$ is infinite countable, without any gap.

\begin{proposition}\label{prop??}
The CMC proper-biharmonic immersion $\phi_{h,\rho} : \rn^2 \to \sn^5$, $\rho\in (0,\tfrac{\pi}{2})$, quotients to a torus if and only if
$$ h = \frac{1- (a-b)^2}{1 + (a-b)^2 + 2(a+b)} ,$$
where $a=p^2/q^2$ and $b=r^2/t^2$ with $p,q,r,t \in \nn^*$, under the condition $0\leq (b-a)^2 <1$. Moreover, in this case
$$\Lambda_{\psi_{h,\rho}} = \{ m v_2 + n v_1 \, : \, m, n \in \zn \mbox{ \upshape{s.t.} } m \tfrac{q}{p} - n \tfrac{qr}{pt} \in \zn\},$$
where 
$$v_1 = \left( \frac{\pi\sqrt{(a-b)^2 + a + b}}{\sqrt{a}}, 0\right), $$
and
$$v_2 = \left( \frac{-\pi\sqrt{b/a}(1-(a-b))}{\sqrt{(a-b)^2 + a + b}}  , \pi\sqrt{\frac{(a-b)^2 + 2(a+b) +1}{(a-b)^2 + a + b}} \right).$$
\end{proposition}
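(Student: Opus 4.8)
The plan is to characterise periodicity entirely through the frequency module of $\psi_{h,\rho}$. As observed just before Proposition~\ref{prop10}, $\psi(z_1)=\psi(z_2)$ is equivalent to $\psi(z_1-z_2)=\psi(0)$, so quotienting to a torus amounts to the period lattice
$$\Lambda_{\psi_{h,\rho}}=\{z\in\rn^2:\la w_0,z\ra\equiv\la w_1,z\ra\equiv\la w_2,z\ra\equiv 0\pmod{2\pi}\}$$
having rank two, where $w_0=\i\sqrt{\lambda_1}$, $w_1=\i\sqrt{\lambda_2}\bar{\eta}_1$ and $w_2=\i\sqrt{\lambda_2}\bar{\eta}_2$ are the three frequency vectors of Formula~\eqref{formula*}. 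Writing $\Gamma=\zn w_0+\zn w_1+\zn w_2\subset\rn^2$ for the module they generate, $\Lambda_{\psi_{h,\rho}}$ is exactly its $2\pi$-dual, so $\Lambda_{\psi_{h,\rho}}$ is a rank-two lattice if and only if $\Gamma$ is discrete.

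Since $\rho\in(0,\tfrac{\pi}{2})$, the vectors $w_0=(0,\sqrt{\lambda_1})$ and $w_1=\sqrt{\lambda_2}(\sin\rho,\cos\rho)$ are linearly independent, hence a basis of $\rn^2$. Expanding $w_2=\sqrt{\lambda_2}(\sin\tilde\rho,\cos\tilde\rho)=A\,w_0+B\,w_1$ gives
$$B=\frac{\sin\tilde\rho}{\sin\rho},\qquad A=\sqrt{\tfrac{\lambda_2}{\lambda_1}}\;\frac{\sin(\rho-\tilde\rho)}{\sin\rho},$$
and $\Gamma$ is discrete precisely when $A,B\in\qn$. When only one of them is rational, $\Gamma$ is dense in a line and $\Lambda_{\psi_{h,\rho}}$ has rank one, which explains why cylinders (Corollary~\ref{cor3}) are far more common than tori. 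Feeding in $\tan\tilde\rho=\tfrac{-1}{h\tan\rho}$ from Lemma~\ref{lemma1} and $\lambda_1=2(1-h)$, $\lambda_2=2(1+h)$, I would reduce $A$ and $B$ to functions of $t=\tan\rho$ and $h$ alone.

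The decisive step is then to set $a=\tfrac{1}{A^2}$ and $b=\tfrac{B^2}{A^2}$ and to eliminate $\rho$. Writing $R=\tfrac{\lambda_2}{\lambda_1}=\tfrac{1+h}{1-h}$, a direct computation should collapse the two expressions for $A^2$ and $B^2$ into the single relation $R(a-b)^2+(R-1)(a+b)=1$, which is algebraically equivalent to $h=\tfrac{1-(a-b)^2}{1+(a-b)^2+2(a+b)}$. The rationality of $A$ and $B$ then reads as the statement that $a=(1/A)^2$ and $b=(B/A)^2$ are squares of rationals, i.e. $a=p^2/q^2$ and $b=r^2/t^2$ with $p,q,r,t\in\nn^*$, while the denominator being automatically positive makes $h\in(0,1)$ equivalent to $0\le(a-b)^2<1$. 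Conversely any admissible pair $(a,b)$ fixes $h$ through the formula and determines $t=\tan\rho$ via $A^2=1/a$, the two equations $A^2=1/a$, $B^2=b/a$ being compatible exactly because of the eliminated relation.

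To exhibit the lattice I would finally solve $A K_0+B K_1\in\zn$ over $(K_0,K_1)\in\zn^2$. With $A=q/p$ and $B=-qr/(pt)$ this congruence becomes $m\tfrac{q}{p}-n\tfrac{qr}{pt}\in\zn$, and the boundary choices $(K_0,K_1)=(1,0)$ and $(0,1)$ produce, through the identities $V=\tfrac{2\pi K_0}{\sqrt{\lambda_1}}$ and $T\sin\rho=2\pi\big(\tfrac{K_1}{\sqrt{\lambda_2}}-\tfrac{K_0}{\sqrt{\lambda_1}}\cos\rho\big)$ from the proof of Proposition~\ref{prop10}, the generators $v_2$ and $v_1$ respectively; rewriting $\sqrt{\lambda_1},\sqrt{\lambda_2},\sin\rho,\cos\rho$ in terms of $a,b$ should then match the displayed vectors. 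I expect the main obstacle to lie in this last bookkeeping, namely carrying the square roots $\sqrt{(a-b)^2+a+b}$ cleanly through the elimination so that $v_1$ and $v_2$ emerge in exactly the stated closed form, rather than in any conceptual difficulty, since the rank criterion is immediate once the frequency module $\Gamma$ is brought into play.
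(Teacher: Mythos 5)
Your proposal is correct, and it follows the paper's overall architecture --- reduce double periodicity to the congruence $AK_0+BK_1\in\zn$, show a torus exists precisely when $A,B\in\qn$, encode rationality by requiring $a=1/A^2$ and $b=B^2/A^2$ to be squares of rationals, deduce the formula for $h$, and finish with the same basis $(v_1,v_2)$ --- but two intermediate steps are handled by genuinely different means, and both of your variants are sound. For the rank-two criterion the paper argues elementarily: two $\rn$-independent periods yield a $2\times 2$ integer linear system in $A$ and $B$, so Cramer's rule forces $A,B\in\qn$, and conversely rationality permits integer solutions; your frequency-module argument ($\Lambda_{\psi_{h,\rho}}$ is the $2\pi$-dual of $\Gamma=\zn w_0+\zn w_1+\zn w_2$, whose dual has rank two iff $\Gamma$ is discrete iff $w_2$ has rational coordinates in the basis $(w_0,w_1)$) is more structural and explains in passing why a single irrational coefficient still leaves a rank-one, i.e.\ cylinder, quotient. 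For the elimination, the paper routes the computation through the variable $s$ of Lemma~\ref{lemma2}, obtaining the quadratics $(1+h)s^2-(2h+1)s+h(1+a)=0$ and $(1+h)s^2-s+hb=0$, whose difference gives $s=\tfrac{1}{2}(1+a-b)$, and back-substitution yields $h$; your single relation $R(a-b)^2+(R-1)(a+b)=1$, $R=\tfrac{\lambda_2}{\lambda_1}$, is indeed an identity in the parameter (one can check it directly from $A^2=\tfrac{h}{(1-s)(s-(1-s)h)}$ and $B^2=\tfrac{s(1-s-hs)}{(1-s)(s-(1-s)h)}$) and is algebraically equivalent to the displayed formula for $h$; your ``compatibility'' remark then plays exactly the role of the paper's back-substitution, since $a-b=2s-1$ pins down $s$ and the identity forces $a+b$ to match as well. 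Two details you gloss over and should spell out: (i) in the converse direction one must verify that the recovered parameter --- $s=\tfrac{1}{2}(1+a-b)$, equivalently the $\rho$ determined by $A^2=1/a$ --- lies in $(\tfrac{h}{h+1},\tfrac{1}{h+1})$, so that $\rho\in(0,\tfrac{\pi}{2})$ really exists; this is automatic from $a>0$, $b>0$, $(a-b)^2<1$, but it is a needed step which the paper at least records; (ii) the signs $A>0$, $B<0$ are what turn rationality into $A=q/p$, $B=-qr/(pt)$ and hence give the congruence its stated form, and the vectors $v_1,v_2$ produced by $(K_0,K_1)=(0,1)$ and $(1,0)$ are in general not themselves periods --- only, e.g., $ptv_1$ and $pv_2$ are --- so ``generators'' must be read as a basis of the ambient lattice $\zn v_1+\zn v_2$ containing $\Lambda_{\psi_{h,\rho}}$, exactly as in the statement.
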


\begin{proof}
Let $v=(T,V)$ be a (non-zero) vector of $\rn^2$. Then $\phi$ is periodic in the direction of $v$ if, as in the previous corollary:
$$V=\frac{2\pi K_0}{\sqrt{\lambda_1}}, \quad T= \frac{2\pi}{\sqrt{\lambda_2}\sin{\rho}} \Big( K_1 - \sqrt{\frac{\lambda_2}{\lambda_1}} K_0 \cos\rho \Big),$$
and
$$\sqrt{\lambda_2} T \sin{\tilde\rho} \, + \sqrt{\lambda_2} V \cos\tilde\rho = 2\pi K_2 ,$$
with $K_0 , K_1 , K_2 \in \zn$.\\
Similarly if $\tilde v =(\widetilde T,\widetilde V)$ is another (non-zero) vector of $\rn^2$, then $\phi$ is periodic in the direction of $\tilde v$ if, as in the previous corollary:
$$\widetilde V=\frac{2\pi \widetilde K_0}{\sqrt{\lambda_1}}, \quad \widetilde T= \frac{2\pi}{\sqrt{\lambda_2}\sin{\rho}} \Big( \widetilde K_1 - \sqrt{\frac{\lambda_2}{\lambda_1}} \widetilde K_0 \cos\rho  \Big),$$
and
$$\sqrt{\lambda_2} \widetilde T \sin{\tilde\rho} \, + \sqrt{\lambda_2} \widetilde V \cos\tilde\rho \, = 2 \pi \widetilde K_2,$$
with $\widetilde K_0 , \widetilde K_1 , \widetilde K_2 \in \zn$.\\ 
If $t=\tan \rho/2$ then $t\in (0,1)$ and $\cos \rho = \frac{1-t^2}{1+t^2}$ and $\sin\rho = \frac{2t}{1+t^2}$. 
Then 
$$\cos 2\rho = \frac{1-6t^2 + t^4}{(1+t^2)^2}$$ 
and
$$\cos 2\tilde\rho = \frac{-1+ (2+4h^2)t^2 - t^4}{1+(-2+ 4h^2)t^2 + t^4},$$
so 
$$ \cos \tilde\rho = \frac{2ht}{\sqrt{4h^2 t^2 + (1-t^2)^2}}$$
and
$$\sin \tilde\rho = \frac{t^2 -1}{\sqrt{4h^2 t^2 + (1-t^2)^2}}.$$
Then the system becomes
\begin{equation}\label{eq**}
\begin{cases}
A \widetilde K_0 + B \widetilde K_1 = \widetilde K_2 \\
A K_0 + B K_1 = K_2 ,
\end{cases}
\end{equation}
with $K_0 , K_1 , K_2 , \widetilde K_0 , \widetilde K_1 , \widetilde K_2 \in \zn$, where 
$$A = \sqrt{\frac{\lambda_2}{\lambda_1}} \frac{1}{\sqrt{4h^2 t^2 + (1-t^2)^2}} \left(\frac{(1-t^2)^2}{2t} + 2ht\right) ,$$
and
$$B= \frac{t^4 -1}{2t \sqrt{4h^2t^2 + (1-t^2)^2}} ,$$
with $A>0$ and $B<0$.

Observe that $v = \lambda \tilde v$ is equivalent to 
\begin{equation*}
\begin{cases}
K_0 = \lambda \widetilde K_0 ,\\
K_1 = \lambda \widetilde K_1 ,
\end{cases}
\end{equation*}
and if $A$ and $B$ are in $\qn$, we can find integers $K_0 , K_1 , K_2 , \widetilde K_0 , \widetilde K_1 , \widetilde K_2$, with $K_0^2+K_1^2>0$,
$\tilde{K}_0^2+\tilde{K}_1^2>0$,  
$(K_0 , K_1) \neq \rn (\widetilde K_0 , \widetilde K_1)$, such that Equation~\eqref{eq**} is satisfied. Conversely, if \eqref{eq**} is satisfied with $(K_0 , K_1) \neq \rn (\widetilde K_0 , \widetilde K_1)$ then the determinant of \eqref{eq**} is non-zero and
\begin{align*}
A &= \frac{\widetilde K_1 K_2 - K_1 \widetilde K_2}{K_0\widetilde K_1 - K_1 \widetilde K_0} , \\
B &= \frac{-\widetilde K_0 K_2 + K_0 \widetilde K_2}{K_0\widetilde K_1 - K_1 \widetilde K_0} .
\end{align*}
So $A$ and $B$ are in $\qn$.

Since $\lambda_1 = 2(1-h)$ and $\lambda_2 = 2(1+h)$ and using Lemma~\eqref{lemma2} we have that
\begin{align*}
A &= \sqrt{\frac{h}{(1-s)(s-(1-s)h)}} ,\\
B &= -\sqrt{\frac{s(1-s-hs)}{(1-s)(s-(1-s)h)}} ,
\end{align*}
and $A$ and $B$ are in $\qn$ if and only if $1/A\in \qn$ and $- B/A \in \qn$ that is
\begin{subnumcases}{}
\frac{(1-s)(s-(1-s)h)}{h} = \frac{p^2}{q^2} \label{sysA} \\
\frac{s(1-s-sh)}{h}  = \frac{r^2}{t^2} \label{sysB}
\end{subnumcases}
with $p,q,r,t \in \nn^*$ and $\frac{h}{h+1} < s < \frac{1}{h+1}$.

Then both $A$ and $B$ are in $\qn$ if and only if there exists $s\in \left(\frac{h}{h+1},\frac{1}{h+1}\right)$ solution to 
\begin{subnumcases}{\label{sys}}
(1+h)s^2 - (2h+1)s +h (1+ a) =0 , \label{sysa} \\
(1+h)s^2 - s + h b =0 , \label{sysb}
\end{subnumcases}
where we put $a=\frac{p^2}{q^2}$ and $b=\frac{r^2}{t^2}$ ($p,q,r,t \in \nn^*$). Assume $s$ is a common solution to Equations~\eqref{sysa} and \eqref{sysb}, by taking the difference between them we obtain that it must be
$$ s= \frac{1}{2} (1 + a -b).$$
Conversely, replacing $s$ by the expression $\frac{1}{2} (1 + a -b)$ in Equation~\eqref{sysa} and Equation~\eqref{sysb} yields the same condition:
$$ h = \frac{1- (a-b)^2}{4b + (1+a-b)^2}.$$
Moreover, the condition $h\in (0,1)$ is equivalent to $0\leq (b-a)^2 <1$ and it automatically ensures that $s\in \left(\frac{h}{h+1},\frac{1}{h+1}\right)$.

Therefore, the only obstruction to solving System~\eqref{sys} is $h = \frac{1- (a-b)^2}{4b + (1+a-b)^2}$, where $a=\frac{p^2}{q^2}$ and $b=\frac{r^2}{t^2}$ ($p,q,r,t \in \nn^*$), with $0\leq (b-a)^2 <1$.

From $\psi_{h,\rho}(0)=\psi_{h,\rho}(v)$, we know that
\begin{align*}
v &= \left( \frac{2\pi K_1}{\sqrt{\lambda_2}\sin\rho} -  \frac{2\pi K_0 \cos\rho }{\sqrt{\lambda_1}\sin\rho} ,  \frac{2\pi K_0}{\sqrt{\lambda_1}}\right) \\
&= K_1 \left( \frac{2\pi}{\sqrt{\lambda_2}\sin\rho} ,0\right) + K_0 \left( -  \frac{2\pi \cos\rho}{\sqrt{\lambda_1}\sin\rho} ,  \frac{2\pi}{\sqrt{\lambda_1}}\right) \\
&= K_1 v_1 + K_0 v_2 ,
\end{align*}
where $v_1 = \left( \tfrac{2\pi}{\sqrt{\lambda_2}\sin\rho} ,0\right)$ and 
$v_2 = \left( -  \tfrac{2\pi \cos\rho}{\sqrt{\lambda_1}\sin\rho} ,  \tfrac{2\pi}{\sqrt{\lambda_1}}\right)$, and $K_0$ and $K_1$ must satisfy 
$$\sqrt{\frac{\lambda_2}{\lambda_1}} \left( \cos\tilde{\rho} - \frac{\sin\tilde\rho}{\tan\rho}\right) K_0 + \frac{\sin\tilde\rho}{\sin\rho} K_1 \in \zn,$$
i.e. $A K_0 + B K_1 \in \zn$.\\
As $\tfrac{1}{A^2} = \tfrac{p^2}{q^2}=a$ and $\tfrac{B^2}{A^2}= \tfrac{r^2}{t^2}=b$, and redenoting $K_0=m$ and $K_1=n$, we obtain
$$\Lambda_{\psi_{h,\rho}} = \{ mv_2 + nv_1 \, : \, m, n \in \zn \mbox{ s.t. } m\tfrac{q}{p} - n\tfrac{qr}{pt} \in \zn\}.$$
Further, as
$$ t = \frac{\sqrt{s(1-h^2)} - \sqrt{h(1-s-sh)}}{\sqrt{s-(1-s)h}} ,$$
and $s= (1/2)(1+a-b)$, $h= \tfrac{1-(a-b)^2}{4b + (1+a-b)^2}$, we obtain
\begin{align*}
t^2 &= \frac{\sqrt{1+a-b}\sqrt{(a-b)^2 + (a+b)} - \sqrt{b}(1-a+b)}{\sqrt{1+a-b}\sqrt{(a-b)^2 + (a+b)} + \sqrt{b}(1-a+b)} ,\\
\sin^2\rho &= \frac{a(1+ (a-b)^2 + 2(a+b))}{(1+a+b)((a-b)^2 + (a+b))} ,\\
\cos^2\rho &= \frac{b(1-a+b)^2}{(1+a+b)((a-b)^2 + (a+b))} ,\\
\sin^2\tilde\rho &= \frac{b(1+ (a-b)^2 + 2(a+b))}{(1+a+b)((a-b)^2 + (a+b))} ,\\
\cos^2\tilde\rho &= \frac{a(1-a+b)^2}{(1+a+b)((a-b)^2 + (a+b))} ,\\
v_1 &= \left( \frac{\pi\sqrt{(a-b)^2 + a + b}}{\sqrt{a}}, 0\right) ,
\end{align*}
and
$$v_2 = \left( \frac{-\pi\sqrt{b/a}(1-(a-b))}{\sqrt{(a-b)^2 + a + b}}  , \pi\sqrt{\frac{(a-b)^2 + 2(a+b) +1}{(a-b)^2 + a + b}} \right).$$
\end{proof}

\begin{remark}
\begin{itemize}
\item We have 
$$ \{ m' (pv_2) + n' (ptv_1) \, : \, m',n' \in \zn\} \subset \Lambda_{\psi_{h,\rho}} ,$$
and the vectors $p v_2$ and $ptv_1$ are linearly independent.
\item If we consider $a=b=1/16$ and then $q^2=9$ (see Remark~\ref{rmk4}), we obtain that the
maps $\phi_{4/5,\rho}$ and $\phi_{4/5,0}$ determine two CMC proper-biharmonic
embeddings in $\sn^5$ with mean curvature $4/5$ from non-isometric tori. Similarly, choosing $a=b=1/36$ and then $q^2=(24)^2$, yield two CMC proper-biharmonic
embeddings in $\sn^5$ with mean curvature $144/145$ from non-isometric tori.
\end{itemize}
\end{remark}

From Proposition~\ref{prop?} and Proposition~\ref{prop??} we get

\begin{theorem}\label{thm7}
Let $h\in (0,1)$. Then there exists a CMC proper-biharmonic immersion from a torus $T^2$ into $\sn^5$, $\phi : T^2\to \sn^5$ with mean curvature $h$ if and only if either
\begin{itemize}
\item[i)] 
$$h = \frac{q^2 -1}{q^2 +1},$$
$q\in \qn$, $q>1$,\\
or
\item[ii)] 
$$ h = \frac{1- (a-b)^2}{1 + (a-b)^2 + 2(a+b)} ,$$
where $a=p^2/q^2$ and $b=r^2/t^2$ with $p,q,r,t \in \nn^*$, with the condition $0\leq (b-a)^2 <1$.
\end{itemize}
\end{theorem}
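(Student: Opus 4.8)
The plan is to treat this theorem as the synthesis of Propositions~\ref{prop?} and~\ref{prop??}, read through the Structure theorem (Theorem~\ref{thm6}): existence of a CMC proper-biharmonic torus of mean curvature $h$ in $\sn^5$ is to be shown equivalent to the period lattice $\Lambda_{\psi_{h,\rho}}$ attaining rank two for some admissible $\rho$, and the two propositions then supply precisely alternatives i) and ii).

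First I would reduce to the flat plane. A CMC proper-biharmonic immersion $\phi : T^2 \to \sn^5$ with $h \in (0,1)$ and constant Gaussian curvature is flat, since Gauss--Bonnet gives $\int_{T^2} K^M\, v_g = 2\pi\chi(T^2) = 0$ while $K^M$ is constant, forcing $K^M = 0$ (consistently with Proposition~\ref{prop5}). Its universal cover is then the flat plane $\rn^2$, so $\phi$ lifts to a CMC proper-biharmonic immersion $\tilde\phi : \rn^2 \to \sn^5$ of the same mean curvature, invariant under the rank-two deck lattice $\Gamma$. By the converse part of Theorem~\ref{thm6}, $\tilde\phi$ agrees, up to isometries of $\rn^2$ and of $\rn^6$, with one of the maps $\phi_{h,\rho}$; such an isometry of $\rn^2$ carries $\Gamma$ onto the period lattice $\Lambda_{\psi_{h,\rho}}$, which is therefore of rank two. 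As the parameter interval $[0,\tfrac{1}{2}\arccos\tfrac{h-1}{1+h}]$ of Theorem~\ref{thm6} lies inside $[0,\tfrac{\pi}{2})$, I may assume $\rho \in [0,\tfrac{\pi}{2})$. Conversely, whenever $\Lambda_{\psi_{h,\rho}}$ has rank two, $\phi_{h,\rho}$ descends to the flat torus $\rn^2/\Lambda_{\psi_{h,\rho}}$. Hence a CMC proper-biharmonic torus of mean curvature $h$ exists if and only if $\Lambda_{\psi_{h,\rho}}$ has rank two for some $\rho \in [0,\tfrac{\pi}{2})$.

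Next I would split on the value of $\rho$. For $\rho = 0$, Proposition~\ref{prop?} gives that $\phi_{h,0}$ quotients to a torus exactly when $\sqrt{\lambda_2/\lambda_1}$ is rational, which by Remark~\ref{rmk4} is alternative i), namely $h = (q^2-1)/(q^2+1)$ with $q \in \qn$, $q > 1$. For $\rho \in (0,\tfrac{\pi}{2})$, Proposition~\ref{prop??} gives that some $\phi_{h,\rho}$ quotients to a torus exactly when $h$ has the form of alternative ii), with $a = p^2/q^2$, $b = r^2/t^2$, $p,q,r,t \in \nn^*$ and $0 \le (b-a)^2 < 1$. These two cases are exhaustive, the endpoint $\rho = \tfrac{\pi}{2}$ being identified with $\rho = 0$ by the component-swapping symmetry employed in Lemma~\ref{lemma1}. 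Taking the disjunction yields the equivalence, and the constructive direction requires no extra work, each proposition having exhibited the torus explicitly.

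The step I would watch most closely is this reduction to normal form: one must be certain that every CMC proper-biharmonic torus, after lifting and applying an ambient isometry, lands inside the single family $\{\phi_{h,\rho}\}_{\rho\in[0,\pi/2)}$, and that normalising by such isometries neither creates nor destroys the rank-two periodicity of the lattice. Granting this, the theorem reduces to concatenating the rationality criteria already established, and no further analytic difficulty is expected.
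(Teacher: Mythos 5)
Your proposal is correct and follows essentially the same route as the paper: lift to the universal cover, invoke the converse part of Theorem~\ref{thm6} to normalise the lift to some $\phi_{h,\rho}$ with $\rho\in[0,\tfrac{\pi}{2})$, and then conclude from Proposition~\ref{prop?} (the case $\rho=0$, via Remark~\ref{rmk4}) and Proposition~\ref{prop??} (the case $\rho\in(0,\tfrac{\pi}{2})$). The only difference is that you make explicit two points the paper leaves implicit — the Gauss--Bonnet argument forcing $K^M=0$ and the transport of the deck lattice into $\Lambda_{\psi_{h,\rho}}$ — which is a welcome but not substantively different elaboration.
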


\begin{proof}
Let $\phi : T^2 \to \sn^5$ a CMC proper-biharmonic immersion with mean curvature $h\in (0,1)$. We consider the universal cover of $T^2$ and denote also with $\phi$ the CMC proper-biharmonic immersion $\phi = \phi\circ\pi : \rn^2 \to \sn^5$. After composing with isometries of $\rn^2$ and $\rn^6$, if necessary, the map $\psi = i\circ\psi : \rn^2 \to \rn^6$ coincides with a map $\psi_{h,\rho}$, with $\rho\in [0,\tfrac{\pi}{2})$. Now the result follows from 
Proposition~\ref{prop?} and Proposition~\ref{prop??}.
\end{proof}

\begin{example}
As an example one can take the values $m=1$, $m'=2$, $n=5$, $R_1=1$, $R'_1=R'_2=\frac{1}{2}$, $\mu_1 =1$ and letting $|H|=h\in (0,1)$ be free. Then
$$ \eta^2_1 +\eta^2_2 = \frac{2(h-1)}{h+1} .$$
As $s=\tfrac{1}{2}$, $\eta_1 = \frac{\sqrt{h}}{\sqrt{h+1}} + \i \frac{1}{\sqrt{h+1}}$ and $\eta_2 = \frac{\sqrt{h}}{\sqrt{h+1}} - \i \frac{1}{\sqrt{h+1}}$ then $\{\pm \eta_1 ,\pm \eta_2\}$ has 4 distinct elements. Then $\eta_1 z -\bar{\eta}_1 \bar{z} = 2\i(\frac{x}{\sqrt{h+1}} + \frac{\sqrt{h}}{\sqrt{h+1}}y)$, so
$$e^{\tfrac{\sqrt{\lambda_2}}{2}(\eta_1 z -\bar{\eta}_1 \bar{z})} = e^{\i(x+\sqrt{h}y)\sqrt{2}}$$ 
and
$$\frac{1}{\sqrt{2}} 2 \Re \Big(\sqrt{R'_1}e^{\tfrac{\sqrt{\lambda_2}}{2}(\eta_1 z -\bar{\eta}_1 \bar{z})} W_1\Big) =
\frac{1}{2}\Big( 0,0,\cos((x+\sqrt{h}y)\sqrt{2}),\sin((x+\sqrt{h}y)\sqrt{2}),0,0  \Big)$$
and similarly $\eta_2 z -\bar{\eta}_2 \bar{z} = 2\i(-\frac{x}{\sqrt{h+1}} + \frac{\sqrt{h}}{\sqrt{h+1}}y)$, 
$$\frac{1}{\sqrt{2}}\sqrt{R'_2}e^{\tfrac{\sqrt{\lambda_2}}{2}(\eta_2 z -\bar{\eta}_2 \bar{z})} = \frac{1}{2}e^{\i(-x+\sqrt{h}y)\sqrt{2}}$$ 
and
$$\frac{1}{\sqrt{2}} 2 \Re \Big(\sqrt{R'_2}e^{\tfrac{\sqrt{\lambda_2}}{2}(\eta_2 z -\bar{\eta}_2 \bar{z})} W_2\Big) =
\frac{1}{2}\Big( 0,0,0,0,\cos((-x+\sqrt{h}y)\sqrt{2}),\sin((-x+\sqrt{h}y)\sqrt{2}) \Big) .$$
From these we get
\begin{align*}
\psi &= \frac{1}{\sqrt{2}} (e^{\i\sqrt{2(1-h)}y},0,0) + \frac{1}{2}(0,e^{\i\sqrt{2}(x+\sqrt{h}y)},0) + \frac{1}{2}(0,0,e^{\i\sqrt{2}(-x+\sqrt{h}y)}) .
\end{align*}
From 
$$\langle \i \sqrt{\lambda_1} , v\rangle \equiv \langle \i \sqrt{\lambda_2}\bar{\eta_1} , v\rangle \equiv  \langle \i \sqrt{\lambda_2}\bar{\eta_2} , v\rangle \equiv 0 \pmod{2\pi}$$
we get
$$\Lambda_{\psi} = \Big\{ n v_1 + m v_2 \, : \,  m,n\in \zn \mbox{ s.t. } 2m \sqrt{\tfrac{h}{1-h}} \in \zn \Big\},$$
where 
$$v_1 = \sqrt{2} \pi (1,0),\quad \mbox{and} \quad v_2 = \sqrt{2}\pi \left(- \sqrt{\tfrac{h}{1-h}} , \tfrac{1}{\sqrt{1-h}} \right).$$
Clearly $\rank{\Lambda_{\psi}} =2$ if and only if $h=\tfrac{1}{4b+1}$, where $b=\tfrac{r^2}{t^2}$, $r,t\in \nn^*$.

If $b = \tfrac{1}{4}$, then $h=\tfrac{1}{2}$ and 
$$\Lambda_{\psi} = \Big\{ n v_1 + m v_2 \, : \,  m,n\in \zn \Big\},$$
where $v_2 = \sqrt{2}\pi (-1,\sqrt{2})$, and therefore
$$\Lambda_{\psi} = \Big\{ \sqrt{2}\pi (n,\sqrt{2}m) \,:\, m,n\in \zn \Big\}.$$
This is the CMC proper-biharmonic embedding found by Sasahara in~\cite{Sasahara}.
\end{example}

\subsection{CMC proper-biharmonic flat surfaces in $\sn^{n}$, $n\geq 7$}

\begin{proposition}
Let $h\in (0,1)$. Then there exists a CMC proper-biharmonic immersion $\phi : \rn^2 \to \sn^{2n+1}$, $n\geq 7$, $n$ odd, with mean curvature $h$.
\end{proposition}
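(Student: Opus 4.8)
The plan is to realise the required surface through Miyata's explicit Formula~\eqref{formula*}, just as the planes of Corollary~\ref{cor2} were produced in $\sn^5$. The construction behind Theorem~\ref{thm6} and Corollary~\ref{cor2} shows that the correspondence of Theorem~\ref{thmMiyata} runs both ways: for any integers $m,m'\geq 1$ and any data $\{R_k\},\{R'_j\},\{\mu_k\},\{\eta_j\}$ satisfying Conditions~(a)--(h) together with $\lambda_1=2(1-h)$, $\lambda_2=2(1+h)$, Formula~\eqref{formula*} defines a CMC proper-biharmonic immersion $\rn^2\to\sn^{2m+2m'-1}$ with $|H|=h$. Since the target $\sn^{2n+1}$ imposes $m+m'=n+1$, it suffices to produce, for each $h\in(0,1)$, admissible data with $m+m'=n+1$.

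The only condition coupling the two spectral families is the balance relation
$$ (1-h)\sum_{k=1}^{m}\mu_k^2 R_k + (1+h)\sum_{j=1}^{m'}\eta_j^2 R'_j = 0 , $$
namely Condition~(f); the others concern each family on its own. I would meet it in the strongest way, by forcing each weighted sum to vanish separately. As $n\geq 7$, I may split $n+1=m+m'$ with $m,m'\geq 2$, say $m=2$ and $m'=n-1$, and then set $R_k=1/m$ with $\mu_k^2=e^{2\pi\i(k-1)/m}$ ranging over the $m$-th roots of unity, and likewise $R'_j=1/m'$ with $\eta_j^2=e^{2\pi\i(j-1)/m'}$ ranging over the $m'$-th roots of unity. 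Since $m,m'\geq 2$, both $\sum_k\mu_k^2 R_k$ and $\sum_j\eta_j^2 R'_j$ vanish, so~(f) holds for every $h$; the $\mu_k^2$ and the $\eta_j^2$ are pairwise distinct, giving the $2m$ and $2m'$ distinct numbers $\{\pm\mu_k\}$ and $\{\pm\eta_j\}$ of Conditions~(g) and~(h); and $\sum_k R_k=\sum_j R'_j=1$ is Condition~(e).

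It remains to confirm that this data genuinely yields an isometric immersion with the right mean curvature, and the pleasant point is that almost nothing new must be checked. Writing $z=x+\i y$, a short computation of the first fundamental form of $\psi=i\circ\phi$ gives $|\psi|^2=\tfrac12\sum_k R_k+\tfrac12\sum_j R'_j=1$ by~(e), so the image lies on $\sn^{2n+1}$; moreover $|\psi_x|^2+|\psi_y|^2=\tfrac{\lambda_1+\lambda_2}{2}=2$, while $|\psi_x|^2-|\psi_y|^2$ and $\langle\psi_x,\psi_y\rangle$ are proportional to the real and imaginary parts of $\lambda_1\sum_k R_k\mu_k^2+\lambda_2\sum_j R'_j\eta_j^2$, which is exactly twice the left-hand side of~(f) and hence vanishes. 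Thus $\psi_x$ and $\psi_y$ are orthonormal, $\phi$ is an isometric immersion of the flat plane, and the orthogonal $2$-type splitting $\psi=\psi_{t_1}+\psi_{t_2}$ with $|\psi_{t_1}|=|\psi_{t_2}|=\tfrac{1}{\sqrt 2}$ forces $|H|=h$, exactly as in the type characterisation of~\cite{BMO1,BMO2}; distinctness of the $\mu_k^2$ and $\eta_j^2$ makes the immersion full in $\sn^{2n+1}$.

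Accordingly I expect no serious obstacle: the sole coupling between the two families is the single relation~(f), and flatness of the induced metric is forced by~(e) and~(f) rather than imposed as an extra demand. The only genuine care is bookkeeping, namely securing a decomposition $n+1=m+m'$ with both parts at least $2$ (available as soon as $n\geq 3$, hence certainly for $n\geq 7$) together with the distinctness Conditions~(g)--(h), which the roots-of-unity choice satisfies uniformly in every dimension. Checking that no $\mu_k$ equals $\pm\mu_l$, and similarly for the $\eta_j$, is automatic here and guarantees that the count $2m+2m'-1=2n+1$ is exact, so the immersion lands fully in $\sn^{2n+1}$ and not in a totally geodesic subsphere.
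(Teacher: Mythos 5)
Your construction is correct, but it follows a genuinely different route from the paper's. The paper bootstraps from the $\sn^5$ family: keeping $m=1$, it multiplies the two angles $\eta_1,\eta_2$ of Lemmas~\ref{lemma1} and~\ref{lemma2} by $\i$ and appends the weight $1-h$ at the frequency $\i$ (using $-h\bigl(s\eta_1^2+(1-s)\eta_2^2\bigr)+(1-h)\i^2=-\tfrac{1-h}{1+h}$) to reach $\sn^7$, and then argues inductively, halving the old weights and appending two new angles chosen by continuity so as to avoid coincidences with the previous ones; each step raises the dimension by $4$, and the two towers based at $\sn^5$ and $\sn^7$ cover all odd dimensions at least $7$. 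You instead write down explicit data in every admissible dimension at once: $m,m'\geq 2$ with $m+m'=n+1$, equal weights, and squared frequencies running over the $m$-th and $m'$-th roots of unity, so each family's weighted sum vanishes separately and Condition~(f) holds trivially and uniformly in $h$, while Conditions~(e), (g), (h) are immediate. This eliminates the induction, the continuity argument, and the dimension-mod-$4$ bookkeeping. Two remarks. First, both proofs rest on the same implicit converse of Theorem~\ref{thmMiyata}, namely that data satisfying (e)--(h) with $\lambda_1=2(1-h)$, $\lambda_2=2(1+h)$ actually produces a CMC proper-biharmonic immersion with $|H|=h$: the paper invokes it without comment (``Therefore, we have defined a CMC proper-biharmonic immersion\dots''), whereas you at least sketch the verification (image in the sphere by (e), flatness and isometry by (f), the $2$-type splitting forcing $H=h(\psi_{t_1}-\psi_{t_2})$, biharmonicity from the characterisation in \cite{BMO1,BMO2}), so you are on at least as firm a footing. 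Second, by the paper's own pseudo-umbilical criterion ($\sum_k R_k\mu_k^2=\sum_j R'_j\eta_j^2=0$), your examples are precisely pseudo-umbilical, i.e.\ diagonal sums of two harmonic maps into spheres of radius $\tfrac{1}{\sqrt 2}$, whereas the paper's examples (having $m=1$, hence $\sum_k R_k\mu_k^2=1\neq 0$) never are; your choice is thus maximally symmetric rather than generic, and it cannot reach $\sn^5$, where $m=1$ is forced---which is harmless, since that case is Corollary~\ref{cor2} and the proposition only concerns odd dimensions at least $7$.
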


\begin{proof}
Let $\phi \in (0,\tfrac{\pi}{2})$. From Lemma~\ref{lemma1} and Lemma~\ref{lemma2} we know that there exist $s$ and $\eta_2$ such that
$$ s \eta_1^2 + (1-s) \eta_2^2 = - \tfrac{1-h}{1+h} .$$
Then, we can see that 
\begin{align*}
& hs (\i \eta_1)^2 + h (1-s) (\i\eta_2)^2 + (1-h) \i^2 = \\
& -h \left( s \eta_1^2 + (1-s) \eta_2^2 \right) + (1-h) \i^2 = - \tfrac{1-h}{1+h} ,
\end{align*}
and $\{ \pm \i \eta_1 ,\pm \i \eta_2 ,\pm\i\}$ are six distinct complex numbers of norm one. Therefore, we have defined a CMC proper-biharmonic immersion from $\rn^2$ to $\sn^7$ with $|H|=h$.

Now, let $\phi : \rn^2 \to \sn^n$ be a CMC proper-biharmonic immersion with $|H|=h$ given by the parameters
$$ \sum_{j=1}^{m'} R'_{j} \eta_{j}^2 =  - \tfrac{1-h}{1+h} , \quad n=2m'+1.$$
By continuity, using again Lemma~\ref{lemma1} and Lemma~\ref{lemma2}, we can take $\eta_{m'+1}$, $\eta_{m'+2}$ and $s$ such that
$$ s \eta_{m'+1}^2 + (1-s) \eta_{m'+2}^2 = - \tfrac{1-h}{1+h} ,$$
and
$$\eta_{m'+1} \neq \pm\eta_j , \quad \eta_{m'+2} \neq \pm\eta_j , \quad \forall j =1,\dots, m'.$$
Then 
$$ (1/2)\sum_{j=1}^{m'} R'_j \eta_j^2 + \tfrac{s}{2} \eta_{m'+1}^2 + \tfrac{1-s}{2} \eta_{m'+2}^2 = - \tfrac{1-h}{1+h} ,$$
and $(1/2)\sum_{j=1}^{m'} R'_j + \tfrac{s}{2} + \tfrac{1-s}{2} =1$ while $\{\pm\eta_j,\pm\eta_{m'+1},\pm\eta_{m'+2}\}$ are $2(m'+2)$ distinct complex numbers of norm one. Therefore, we have defined a CMC proper-biharmonic immersion from $\rn^2$ to $\sn^{n+4}$ with $|H|=h$. Now, as we have constructed, for any $h\in (0,1)$, CMC proper-biharmonic immersions from $\rn^2$ to $\sn^{5}$ and $\sn^7$ with mean curvature equal to $h$, the proposition follows.
\end{proof}

In the following we will change the point of view and, given a torus, ask if there is a CMC proper-biharmonic immersion from that torus to $\sn^n$.

Let $T^2 = \rn^2 / \Lambda$, where $\Lambda$ is a discrete lattice of rank 2. We denote 
$$\Lambda^* = \{ w \in \rn^2 : \la w,z\ra \equiv 0 \pmod{2\pi} , \forall z\in \Lambda\}$$
and
$$c(\Lambda , r) = \{ w^2 : w\in \Lambda^* \mbox{ and } |w|=r\}.$$
Let $\phi_{h} : \rn^2 \to \sn^n$ be a CMC proper-biharmonic immersion, $h\in (0,1)$. This map quotients to $\phi_{h} : T^2 = \rn^2 / \Lambda \to \sn^n$ if and only if $\i \sqrt{\lambda_1} \bar{\mu}_k \in \Lambda^*$, $\forall k=1,\dots ,m$, and $\i \sqrt{\lambda_2} \bar{\eta}_j \in \Lambda^*$, $\forall j=1,\dots ,m'$. 
In this case $\Lambda$ is an abelian subgroup of $\Lambda_{\psi_h}$ and, if $\Lambda=\Lambda_{\psi_h}$, $\phi_h$ is an embedding from $T^2$.

If we denote $\alpha_k = (\pm \i \sqrt{\lambda_1} \bar{\mu}_k)^2= - \lambda_1 \bar{\mu}_k^2$ and $\gamma_j = (\pm \i \sqrt{\lambda_2} \bar{\eta}_j)^2 = - \lambda_2 \bar{\eta}_j^2$, we have that $\{ \alpha_k\}_{k=1}^m$ are $m$ distinct complex numbers, $|\alpha_k|=\lambda_1$ and $\{ \gamma_j\}_{j=1}^{m'}$ are $m'$ distinct complex numbers with 
$|\gamma_j| = \lambda_2$. Thus $\{ \alpha_k\}_{k=1}^m \subset c(\Lambda, \sqrt{\lambda_1})$ and $\{ \gamma_j\}_{j=1}^{m'}\subset c(\Lambda, \sqrt{\lambda_2})$. We can see that the relation (f) of Theorem~\ref{thmMiyata} can be re-written equivalently as
$$\sum_{k=1}^m \alpha_k R_k + \sum_{j=1}^{m'} \gamma_j R'_j =0.$$

Now we can state

\begin{theorem}
Let $\Lambda \subset \rn^2$ be a discrete lattice of rank 2 and $\Lambda$ its dual. Let $h\in (0,1)$ be a fixed constant. Then the flat torus $T=\rn^2/\Lambda$ admits a CMC proper-biharmonic immersion with mean curvature $h$ in $\sn^n$ if and only if there exist $m$ distinct complex numbers $\{ \alpha_k\}_{k=1}^m \subset c(\Lambda, \sqrt{\lambda_1})$ and $m'$ distinct complex numbers 
$\{ \gamma_j\}_{j=1}^{m'}\subset c(\Lambda, \sqrt{\lambda_2})$, $n= 2m + 2m' -1$, such that 
$$\sum_{k=1}^m \alpha_k R_k + \sum_{j=1}^{m'} \gamma_j R'_j =0,$$
where $\lambda_1 = 2(1-h)$, $\lambda_2 = 2(1+h)$, $\sum_{k=1}^{m} R_k =1$, $\sum_{j=1}^{m'} R'_j =1$, $R_k>0$, $R'_j >0$.
\end{theorem}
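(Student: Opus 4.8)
The plan is to read the statement as a direct dictionary translation of Miyata's parametrisation (Theorem~\ref{thmMiyata}), recast in the language of the dual lattice $\Lambda^*$. Everything needed for the translation has essentially been assembled in the discussion immediately preceding the theorem: the descent criterion stating that a CMC proper-biharmonic $\phi_h:\rn^2\to\sn^n$ written as in~\eqref{formula*} quotients to $\rn^2/\Lambda$ if and only if $\i\sqrt{\lambda_1}\bar{\mu}_k\in\Lambda^*$ for all $k$ and $\i\sqrt{\lambda_2}\bar{\eta}_j\in\Lambda^*$ for all $j$; the identifications $\alpha_k=(\pm\i\sqrt{\lambda_1}\bar{\mu}_k)^2=-\lambda_1\bar{\mu}_k^2$ and $\gamma_j=(\pm\i\sqrt{\lambda_2}\bar{\eta}_j)^2=-\lambda_2\bar{\eta}_j^2$, for which $|\alpha_k|=\lambda_1$ and $|\gamma_j|=\lambda_2$ since $|\mu_k|=|\eta_j|=1$; and the fact that condition (f) rewrites as $\sum_k\alpha_kR_k+\sum_j\gamma_jR'_j=0$. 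My job is therefore to combine these into the two implications, supplying the converse construction.

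For the forward direction I would take a CMC proper-biharmonic immersion of $T^2=\rn^2/\Lambda$ with $|H|=h$, lift it along the universal cover, and invoke Theorem~\ref{thmMiyata} to write $\psi=i\circ\phi$ in the form~\eqref{formula*} with data obeying (a)--(h), where $n=2m+2m'-1$. Since $\phi$ descends to $T^2$, the criterion above gives $\i\sqrt{\lambda_1}\bar{\mu}_k\in\Lambda^*$ and $\i\sqrt{\lambda_2}\bar{\eta}_j\in\Lambda^*$, and squaring yields $\alpha_k\in c(\Lambda,\sqrt{\lambda_1})$ and $\gamma_j\in c(\Lambda,\sqrt{\lambda_2})$. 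Distinctness of the $\alpha_k$ (resp.\ $\gamma_j$) follows from (g) (resp.\ (h)): as $\{\pm\mu_k\}$ are $2m$ distinct numbers, the squares $\mu_k^2$, and hence the $\alpha_k$, are pairwise distinct. The required linear relation is then precisely the reformulation of (f).

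For the converse I would run the dictionary backwards. Each $\alpha_k\in c(\Lambda,\sqrt{\lambda_1})$ is by definition $w_k^2$ for some $w_k\in\Lambda^*$ with $|w_k|=\sqrt{\lambda_1}$; I set $\mu_k$ via $\i\sqrt{\lambda_1}\bar{\mu}_k=w_k$, so that $|\mu_k|=1$, and obtain $\eta_j$ the same way from square roots of $\gamma_j$. Distinctness of the $\alpha_k$ forces $\{\pm\mu_k\}$ to be $2m$ distinct unit complex numbers, securing (g), and likewise (h); condition (e) is given, and the conjugation computation shows that $\sum_k\alpha_kR_k+\sum_j\gamma_jR'_j=0$ is equivalent to (f). Thus the full data satisfy (a)--(h), so Formula~\eqref{formula*} produces a CMC proper-biharmonic immersion $\psi:\rn^2\to\rn^{n+1}$, hence $\phi:\rn^2\to\sn^n$ with $|H|=h$ (the sufficiency half of Miyata's description, as already used in Corollary~\ref{cor2} and the subsequent constructions). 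Finally, because $w_k=\i\sqrt{\lambda_1}\bar{\mu}_k\in\Lambda^*$ and the analogous vectors for $\eta_j$ lie in $\Lambda^*$, the descent criterion is met and $\phi$ factors through $T^2=\rn^2/\Lambda$.

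The argument is essentially bookkeeping, and I do not expect a serious analytic obstacle; the one point deserving care is that the squaring map $w\mapsto w^2$ is exactly two-to-one away from the origin, so that the unordered data $\{\pm\mu_k\}$ and the numbers $\{\alpha_k\}$ carry the same information. This is what makes the correspondence a genuine equivalence at the level of the distinctness conditions (g),(h), and one must check that no spurious coincidence $w_k=-w_{k'}$ with $k\neq k'$ can occur, which is guaranteed precisely by the $\alpha_k$ being distinct. The only substantive external input is the sufficiency direction of Miyata's parametrisation, namely that data satisfying (a)--(h) genuinely define an immersion through~\eqref{formula*}.
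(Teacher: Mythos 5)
Your proposal is correct and is essentially the paper's own argument: the paper states this theorem without a separate proof, treating it as an immediate consequence of the immediately preceding discussion (the descent criterion $\i\sqrt{\lambda_1}\bar{\mu}_k,\ \i\sqrt{\lambda_2}\bar{\eta}_j\in\Lambda^*$, the dictionary $\alpha_k=-\lambda_1\bar{\mu}_k^2$, $\gamma_j=-\lambda_2\bar{\eta}_j^2$, and the rewriting of condition (f)), combined with both directions of Miyata's parametrisation. Your write-up simply makes that bookkeeping explicit, including the correct observation that distinctness of the $\alpha_k$ is exactly equivalent to condition (g) under the two-to-one squaring map.
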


The following non-existence result follows easily.

\begin{proposition}
Let $\Lambda$ be a given lattice of rank 2 and $\Lambda^*$ its dual. Let $h\in (0,1)$ be a fixed constant. If $\Lambda^* \cap \sn^{1}(\sqrt{2(1-h)}) = \emptyset$ or 
$\Lambda^* \cap \sn^{1}(\sqrt{2(1+h)}) = \emptyset$, then there is no proper-biharmonic immersion $\phi : \rn^2 /\Lambda \to \sn^n$ with $|H|=h$, for any $n\geq 7$.
\end{proposition}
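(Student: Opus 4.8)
The plan is to obtain this statement as an immediate corollary of the preceding characterisation theorem, the only substantive work being to translate the hypothesis on $\Lambda^*$ into a statement about the sets $c(\Lambda,\sqrt{\lambda_1})$ and $c(\Lambda,\sqrt{\lambda_2})$ that appear in that theorem. Observe first that, since $h\in(0,1)$ is a fixed constant, the condition $|H|=h$ already forces the immersion to be CMC, so the two notions coincide here and the characterisation theorem applies verbatim.

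First I would record the elementary equivalence
$$\Lambda^* \cap \sn^1(r) = \emptyset \iff c(\Lambda, r) = \emptyset.$$
Indeed, identifying $\rn^2\cong\cn$, the set $c(\Lambda,r)$ is by definition the image of $\{w\in\Lambda^* : |w|=r\} = \Lambda^*\cap\sn^1(r)$ under the squaring map $w\mapsto w^2$; this map sends the empty set to the empty set and any non-empty set to a non-empty set, which gives the equivalence. Taking $r=\sqrt{2(1-h)}=\sqrt{\lambda_1}$ and $r=\sqrt{2(1+h)}=\sqrt{\lambda_2}$, the hypothesis of the proposition reads exactly as $c(\Lambda,\sqrt{\lambda_1})=\emptyset$ or $c(\Lambda,\sqrt{\lambda_2})=\emptyset$.

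Next I would argue by contradiction. Suppose a proper-biharmonic immersion $\phi:\rn^2/\Lambda\to\sn^n$ with $|H|=h$ existed for some $n\geq 7$. By the characterisation theorem there would then be distinct complex numbers $\{\alpha_k\}_{k=1}^{m}\subset c(\Lambda,\sqrt{\lambda_1})$ and $\{\gamma_j\}_{j=1}^{m'}\subset c(\Lambda,\sqrt{\lambda_2})$, together with weights satisfying $\sum_{k=1}^{m}R_k=1$ with $R_k>0$ and $\sum_{j=1}^{m'}R'_j=1$ with $R'_j>0$. The strict positivity of the weights forces $m\geq 1$ and $m'\geq 1$, so both families are non-empty; in particular both $c(\Lambda,\sqrt{\lambda_1})$ and $c(\Lambda,\sqrt{\lambda_2})$ must be non-empty, contradicting the hypothesis that at least one of them is empty.

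Since this reasoning uses nothing about $n$ beyond the relation $n=2m+2m'-1$, the conclusion holds for every admissible $n\geq 7$. There is no genuine obstacle once the characterisation theorem is in hand: the proposition is a bookkeeping consequence of it. The single point deserving care is the observation that the strict positivity of the coefficients $R_k$ and $R'_j$ forces each of the two families $\{\alpha_k\}$ and $\{\gamma_j\}$ to be non-empty, so that the emptiness of either $c(\Lambda,\sqrt{\lambda_1})$ or $c(\Lambda,\sqrt{\lambda_2})$ already obstructs existence.
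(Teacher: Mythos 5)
Your proposal is correct and follows exactly the route the paper intends: the paper gives no explicit proof, stating only that the result ``follows easily'' from the preceding characterisation theorem, and your argument---translating the hypothesis via $\Lambda^*\cap\sn^1(r)=\emptyset \iff c(\Lambda,r)=\emptyset$ and noting that the positive weights summing to one force $m\geq 1$ and $m'\geq 1$---is precisely that easy deduction, spelled out. Nothing is missing.
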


\begin{corollary}
Let $\Lambda$ be a given lattice of rank 2 and $\Lambda^*$ its dual. Assume that $\Lambda^* = v_1 \zn + v_2 \zn$, where $0<|v_1|\leq |v|$, $\forall v\in \Lambda^*\setminus\{(0,0)\}$. If $|v_1|\geq \sqrt{2}$ then there is no CMC proper-biharmonic immersion $\phi : \rn^2 /\Lambda \to \sn^n$ with $|H|\in (0,1)$, for any $n\geq 7$.
\end{corollary}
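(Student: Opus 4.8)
The plan is to deduce this directly from the preceding Proposition, which already rules out existence whenever the dual lattice avoids the circle $\sn^1(\sqrt{2(1-h)})$ or the circle $\sn^1(\sqrt{2(1+h)})$. Since the statement asserts non-existence for \emph{any} $n\geq 7$ and for mean curvature anywhere in $(0,1)$, I would fix an arbitrary $h\in(0,1)$ and show that no immersion with $|H|=h$ can exist; letting $h$ range over $(0,1)$ then yields the corollary. The whole argument thus reduces to verifying the emptiness hypothesis of that Proposition for one of the two relevant circles.

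The key observation is a single length comparison. By assumption $v_1$ realises the minimal norm among the nonzero vectors of $\Lambda^*$, so every $v\in\Lambda^*\setminus\{(0,0)\}$ satisfies $|v|\geq|v_1|\geq\sqrt{2}$. On the other hand, for $h\in(0,1)$ we have $0<2(1-h)<2$, hence $0<\sqrt{2(1-h)}<\sqrt{2}$. Therefore the smaller radius $\sqrt{2(1-h)}$ is strictly below the length of the shortest nonzero dual vector, so no nonzero element of $\Lambda^*$ can lie on $\sn^1(\sqrt{2(1-h)})$; and because $h<1$ forces $\sqrt{2(1-h)}>0$, the zero vector is excluded as well. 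Hence $\Lambda^*\cap\sn^1(\sqrt{2(1-h)})=\emptyset$.

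Applying the preceding Proposition to this value of $h$ then shows there is no CMC proper-biharmonic immersion $\phi:\rn^2/\Lambda\to\sn^n$ with $|H|=h$, for any $n\geq 7$. As $h\in(0,1)$ was arbitrary, the conclusion follows. There is essentially no obstacle here: the argument is a one-line norm estimate feeding the previous Proposition, and the only point deserving a moment's care is keeping the inequalities strict so that the zero vector is also excluded from the circle $\sn^1(\sqrt{2(1-h)})$ --- this is precisely where the hypothesis $h<1$ (rather than $h\leq 1$) is used. Note also that only the inner circle, of radius $\sqrt{2(1-h)}$, is needed; the outer circle of radius $\sqrt{2(1+h)}\in(\sqrt{2},2)$ may well meet $\Lambda^*$, but this is irrelevant once one circle has been shown to be missed.
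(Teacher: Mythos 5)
Your proof is correct and is exactly the intended argument: the paper states this corollary without proof as an immediate consequence of the preceding proposition, and your norm estimate $|v|\geq |v_1|\geq \sqrt{2} > \sqrt{2(1-h)} > 0$ for all nonzero $v\in\Lambda^*$ (together with excluding the zero vector) is precisely how the emptiness hypothesis $\Lambda^*\cap\sn^1(\sqrt{2(1-h)})=\emptyset$ is verified. Nothing further is needed.
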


\begin{proposition}
Let $\Lambda$ be a given lattice of rank 2 and $\Lambda^*$ its dual. Let $h\in (0,1)$ be a fixed constant. Assume that $\Lambda^* \cap \sn^{1}(\sqrt{2(1-h)}) \neq \emptyset$ and $\Lambda^* \cap \sn^{1}(\sqrt{2(1+h)}) \neq \emptyset$. Denote by $H(\Lambda,r)$ the convex hull of $c(\Lambda,r)$. We have
\begin{itemize}
\item[i)] If $(0,0)\in H(\Lambda,\sqrt{2(1-h)})$ and $(0,0)\in H(\Lambda,\sqrt{2(1+h)})$ then there exists $\phi : \rn^2 / \Lambda \to \sn^n$ a proper-biharmonic immersion with $|H|=h$ and, moreover, it is pseudo-umbilical.
\item[ii)] If $(0,0)$ does not belong to the convex hull of $c(\Lambda,\sqrt{2(1-h)})\cup c(\Lambda,\sqrt{2(1+h)})$ then there is no proper-biharmonic immersion $\phi : \rn^2 / \Lambda \to \sn^n$ with $|H|=h$, for any $n\geq 7$.
\end{itemize}
\end{proposition}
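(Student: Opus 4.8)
The plan is to read both statements off the characterisation theorem proved just above, which turns the existence of a CMC proper-biharmonic immersion $\phi:\rn^2/\Lambda\to\sn^n$ with $|H|=h$ into the solvability of $\sum_{k}\alpha_kR_k+\sum_{j}\gamma_jR'_j=0$ for distinct $\{\alpha_k\}\subset c(\Lambda,\sqrt{\lambda_1})$, distinct $\{\gamma_j\}\subset c(\Lambda,\sqrt{\lambda_2})$, and weights $R_k,R'_j>0$ with $\sum_kR_k=\sum_jR'_j=1$. The guiding remark is that $\sum_k\alpha_kR_k$ ranges over exactly the convex hull $H(\Lambda,\sqrt{\lambda_1})$ and $\sum_j\gamma_jR'_j$ over $H(\Lambda,\sqrt{\lambda_2})$, so the whole statement becomes a question about where the origin sits relative to these two hulls.

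For part i), assuming $(0,0)\in H(\Lambda,\sqrt{\lambda_1})$, I would use Carath\'eodory's theorem in the plane to express $(0,0)=\sum_k\alpha_kR_k$ as a convex combination of finitely many \emph{distinct} points $\alpha_k\in c(\Lambda,\sqrt{\lambda_1})$ with strictly positive weights, discarding the zero-weight terms, and symmetrically write $(0,0)=\sum_j\gamma_jR'_j$ from $(0,0)\in H(\Lambda,\sqrt{\lambda_2})$. Each sum already vanishes, so their sum vanishes and the characterisation theorem yields the immersion; since the origin lies on neither circle $\sn^1(\lambda_1)$ nor $\sn^1(\lambda_2)$ (both radii positive as $h\in(0,1)$), at least two points are needed from each family, whence $m,m'\geq 2$ and $n=2m+2m'-1\geq 7$. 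For the pseudo-umbilicity I would pass through the identities $\alpha_k=-\lambda_1\bar{\mu}_k^2$, $\gamma_j=-\lambda_2\bar{\eta}_j^2$ from the discussion above: $\sum_k\alpha_kR_k=0$ is equivalent to $\sum_kR_k\mu_k^2=0$ and $\sum_j\gamma_jR'_j=0$ to $\sum_jR'_j\eta_j^2=0$, which together are precisely the conditions $\lambda_1\sum_kR_k\mu_k^2=\lambda_2\sum_jR'_j\eta_j^2=0$ shown earlier to characterise pseudo-umbilical immersions of $\rn^2$; applying this to the lift $\phi\circ\pi:\rn^2\to\sn^n$, which is a local isometry, pseudo-umbilicity descends to $\rn^2/\Lambda$.

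For part ii), I would argue by contraposition. If a CMC proper-biharmonic immersion $\phi:\rn^2/\Lambda\to\sn^n$ with $|H|=h$ existed for some $n\geq 7$, the characterisation theorem would supply data with $\sum_k\alpha_kR_k+\sum_j\gamma_jR'_j=0$, $\sum_kR_k=\sum_jR'_j=1$. Halving the weights gives
$$(0,0)=\sum_k\alpha_k\tfrac{R_k}{2}+\sum_j\gamma_j\tfrac{R'_j}{2},$$
which is a genuine convex combination, the coefficients being positive and summing to $\tfrac{1}{2}+\tfrac{1}{2}=1$, of points of $c(\Lambda,\sqrt{\lambda_1})\cup c(\Lambda,\sqrt{\lambda_2})$. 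Hence $(0,0)$ would lie in the convex hull of that union, contradicting the hypothesis of ii).

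The calculations here are minimal; the delicate point is the bookkeeping of the squaring dictionary $\alpha_k=-\lambda_1\bar{\mu}_k^2$, $\gamma_j=-\lambda_2\bar{\eta}_j^2$, so that the separate vanishing of the two sums in part i) is matched exactly with the two pseudo-umbilicity equations, and so that the renormalisation by $\tfrac{1}{2}$ in part ii) produces genuine (positive, unit-mass) convex weights. I do not anticipate a serious obstacle beyond ensuring that the combinations furnished by Carath\'eodory use \emph{distinct} lattice squares with strictly positive weights, which is what guarantees that the resulting immersion is full into the correct $\sn^n$.
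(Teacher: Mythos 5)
The paper states this proposition without proof, presenting it as an immediate consequence of the characterisation theorem just above, and your argument is exactly that intended derivation: in part i) you split the origin as two separate convex combinations, one on each circle (which forces $m,m'\geq 2$, hence $n=2m+2m'-1\geq 7$, and makes the two sums vanish separately, matching the pseudo-umbilicity conditions $\lambda_1\sum_k R_k\mu_k^2=\lambda_2\sum_j R'_j\eta_j^2=0$ established earlier for immersions of $\rn^2$, a pointwise property that descends to the quotient), and in part ii) you halve the weights to place the origin in the convex hull of $c(\Lambda,\sqrt{\lambda_1})\cup c(\Lambda,\sqrt{\lambda_2})$, contradicting the hypothesis. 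Your proof is correct; the only superfluous ingredient is Carath\'eodory's theorem, since $c(\Lambda,r)$ is a finite set (a discrete lattice meets a circle in finitely many points), so any convex combination of its elements representing the origin already uses distinct points once zero-weight terms are discarded.
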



\begin{thebibliography}{999}


\bibitem{BMO1}
A.~Balmu{\c s}, S.~Montaldo and C.~Oniciuc, 
\newblock Biharmonic PNMC submanifolds in spheres,
\newblock {\em Ark. Mat.} {\bf 51} (2013), 197-221.

\bibitem{BMO2}
A.~Balmu{\c s}, S.~Montaldo and C.~Oniciuc, 
\newblock Classification results for biharmonic submanifolds in spheres,
\newblock {\em Israel J. Math.} {\bf 168} (2008), 201--220.

\bibitem{Bryant}
R.~Bryant,
\newblock Minimal surfaces of constant curvature in $\sn^n$, 
\newblock {\em Trans. Amer. Math. Soc.} {\bf 290} (1985), 259--271.

\bibitem{CMO}
R.~Caddeo, S.~Montaldo and C.~Oniciuc, 
\newblock Biharmonic submanifolds in spheres,
\newblock {\em Israel J. Math.} {\bf 130} (2002), 109--123.

\bibitem{Chen1}
B.-Y.~Chen,
\newblock A report on submanifolds of finite type,
\newblock {\em Soochow J. Math.} {\bf 22} (1996), 117--337.

\bibitem{Chen2}
B.-Y.~Chen,
\newblock Total Mean Curvature and Submanifolds of Finite Type,
\newblock  Series in Pure Mathematics, 1. World Scientific Publishing Co., Singapore, 1984.

\bibitem{Fetcu}
D. Fetcu and A.~L. Pinheiro,
\newblock Biharmonic surfaces with parallel mean curvature in complex space forms, 
\newblock {\em preprint} arXiv: 1303.4279v1.

\bibitem{HV1}
Th.~Hasanis and Th.~Vlachos,
\newblock $2$-Type surfaces in a hypersphere, 
\newblock {\em Kodai Math. J.} {\bf 19} (1996), 26--38.

\bibitem{HV2}
Th.~Hasanis and Th.~Vlachos,
\newblock Spherical $2$-type surfaces, 
\newblock {\em Arch. Math. (Basel)} {\bf 67} (1996), 430--440.

\bibitem{LO}
E.~Loubeau and C.~Oniciuc,
\newblock Biharmonic surfaces of constant mean curvature, 
\newblock to appear in Pacific J. Maths.

\bibitem{Miyata}
Y.~Miyata,
\newblock $2$-Type surfaces of constant curvature in $\sn^n$, 
\newblock {\em Tokyo J. Math.} {\bf 11} (1988), 157--204.

\bibitem{OniciucPhD}
C.~Oniciuc,
\newblock Tangency and Harmonicity Properties,
\newblock  PhD Thesis, Geometry Balkan Press 2003, http://www.mathem.pub.ro/dgds/mono/dgdsmono.htm

\bibitem{OZ}
Y.-L.~Ou and Z.-P.~Wang,
\newblock Constant mean curvature and totally umbilical biharmonic surfaces in $3$-dimensional geometries,
\newblock {\em J. Geom. Phys.} {\bf 61} (2011), 1845--1853.

\bibitem{Sampson}
J.~H.~Sampson,
\newblock Some properties and applications of harmonic mappings, 
\newblock {\em Ann. Sci. ENS} {\bf 11} (1978), 211--228.

\bibitem{S} 
T.~Sasahara, 
\newblock Biharmonic Lagrangian surfaces of constant mean curvature in complex space forms, 
\newblock {\em Glasg. Math. J.} {\bf 49} (2007), 497--507.

\bibitem{Sasahara} 
T.~Sasahara, 
\newblock Legendre surfaces in Sasakian space forms whose mean curvature vectors are
eigenvectors, 
\newblock {\em Publ. Math. Debrecen} {\bf 67} (2005), 285--303.

\bibitem{Takahashi}
T.~Takahashi, 
\newblock Minimal immersions of Riemannian manifolds,
\newblock {\em J. Math. Soc. Japan} {\bf 18} (1966), 380--385.


\end{thebibliography}
\end{document}